\documentclass[12pt,a4paper]{amsart}

\usepackage{amsthm}
\usepackage{amssymb}
\usepackage{amsmath}
\usepackage{array}
\usepackage[english]{babel}
\usepackage{tikz-cd}
\usepackage{multirow}
\usepackage{booktabs}
\usetikzlibrary{arrows, shapes.geometric}
\usepackage{csquotes}
\usepackage[margin=2.7cm]{geometry}
\usepackage[backend=biber,style=numeric]{biblatex}
\usepackage{subcaption}
\usepackage{caption}
\usepackage{graphicx}
\usepackage{hyperref}
\hypersetup{hidelinks}
\usepackage{float}

\theoremstyle{plain}
\newtheorem{thm}{Theorem}[section]

\newtheorem{prop}[thm]{Proposition}
\newtheorem{cor}[thm]{Corollary}

\theoremstyle{definition}
\newtheorem{defn}[thm]{Definition}
\newtheorem{ex}[thm]{Example}

\theoremstyle{remark}
\newtheorem{rem}[thm]{Remark}

\newcommand{\cA}{{\mathcal A}}
\newcommand{\cD}{{\mathcal D}}
\newcommand{\cF}{{\mathcal F}}
\newcommand{\cG}{{\mathcal G}}
\newcommand{\cM}{{\mathcal M}}
\newcommand{\cN}{{\mathcal N}}
\newcommand{\cX}{{\mathcal X}}

\newcommand{\cXtop}{{{\mathcal X}^\top}}

\newcommand{\N}{{\mathbb{N}}}
\newcommand{\R}{{\mathbb{R}}}
\newcommand{\T}{{\mathbb{T}}}
\newcommand{\Z}{{\mathbb{Z}}}

\newcommand{\mvmap}{\rightrightarrows}
\newcommand{\walk}{{\rightsquigarrow}}
\newcommand{\Int}{\mathop{\mathrm{int}}\nolimits}
\newcommand{\cl}{\mathop{\mathrm{cl}}\nolimits}
\newcommand{\Inv}{\mathop{\mathrm{Inv}}\nolimits}
\newcommand{\Con}{\mathop{\mathrm{Con}}\nolimits}
\newcommand{\sA}{{\mathsf{ A}}}
\newcommand{\sJ}{{\mathsf{ J}}}
\newcommand{\sL}{{\mathsf{ L}}}
\newcommand{\sN}{{\mathsf{ N}}}
\newcommand{\sO}{{\mathsf{ O}}}
\newcommand{\sP}{{\mathsf{ P}}}
\newcommand{\sAtt}{{\mathsf{ Att}}}
\newcommand{\sANbhd}{{\mathsf{ ANbhd}}}
\newcommand{\sABlock}{{\mathsf{ ABlock}}}
\newcommand{\sInvset}{{\mathsf{ Invset}}}
\newcommand{\sMG}{{\mathsf{ MG}}}
\newcommand{\sMR}{{\mathsf{ MR}}}
\newcommand{\sMD}{{\mathsf{ MD}}}
\newcommand{\sSC}{{\mathsf{ SCC}}}
\newcommand{\sRC}{{\mathsf{ RC}}}
\newcommand{\setof}[1]{\left\{ {#1}\right\}}
\newcommand{\setdef}[2]{\left\{{#1}\,\left|\,{#2}\right.\right\}}

\title[Characterizing High-dimensional Dynamics]{Characterizing High-dimensional Dynamics by Combinatorial-Topological Methods on a Latent Space}

\author[Bailon]{Patrick Bailon}
\address{Department of Mathematics, Rutgers University, Piscataway, NJ 08854, USA}
\email{plb103@scarletmail.rutgers.edu}

\author[Gameiro]{Marcio Gameiro}
\address{Department of Mathematics, Rutgers University, Piscataway, NJ 08854, USA}
\email{gameiro@math.rutgers.edu}

\author[Gelb]{Brittany Gelb}
\address{Department of Mathematics, Rutgers University, Piscataway, NJ 08854, USA}
\email{brittany.gelb@rutgers.edu}

\author[Kalies]{William Kalies}
\address{Department of Mathematics and Statistics, University of Toledo, Toledo, OH 43606, USA}
\email{william.kalies@utoledo.edu}

\author[Kramar]{Miroslav Kramar}
\address{Department of Mathematics, University of Oklahoma, Norman, OK 73019, USA}
\email{miro@ou.edu}

\author[Mischaikow]{Konstantin Mischaikow}
\address{Department of Mathematics, Rutgers University, Piscataway, NJ 08854, USA}
\email{mischaik@math.rutgers.edu}

\author[Rivas]{Bernardo Rivas}
\address{Department of Mathematics and Statistics, University of Toledo, Toledo, OH 43606, USA}
\email{bernardo.dopradorivas@utoledo.edu}

\author[Vieira]{Ewerton Vieira}
\address{Department of Mathematics, Rutgers University, Piscataway, NJ 08854, USA}
\email{er691@rutgers.edu}

\begin{document}

\begin{abstract}
  Combinatorial-topological methods for characterizing dynamics are rigorous, generalizable, computable, and they only require approximations, but the dimension of the phase space is a computational bottleneck to their wider application. Motivated by the growing number of machine learning techniques for obtaining lower-dimensional latent representations of dynamics, we present an initial study of combinatorial-topological techniques in the dimensionality reduction setting. We establish bounds under which an algebraic structure that organizes dynamics can be lifted from the latent space to the original system. As a corollary, one can conclude the existence of attractors within certain regions of the original phase space. The hypothesis of these results is expressed in terms of an approximate semiconjugacy between the original and latent dynamics. To demonstrate the ideas, we combine autoencoder-based models with Conley-Morse graph computations for Leslie population models, a thirteen-dimensional Mediterranean red coral population model, and the Chafee--Infante equation. While the lift of the Conley index is still an open question, the examples recover the expected algebraic topological invariants in several settings.
\end{abstract}
\maketitle

\section{Introduction}
\label{sec:introduction}

The purpose of this paper is to introduce a new approach for the analysis of high-dimensional data-driven dynamics.
The underlying assumption is that the data is generated by a continuous dynamical system $\varphi\colon \T^+ \times X\to X$ where $X$ is a complete metric space and $\T = \Z$ or $\R$.
For the sake of simplicity of exposition we assume a fixed sampling rate, and thus, observe the dynamics of $\varphi$ via a finite set of samples from a  continuous map $f\colon X \to X$.

To have any hope of using finite data to characterize the global dynamics of $f$ requires additional assumptions. For this paper we assume that $f$ possesses a \emph{global compact attractor} $A$, i.e., $A$ is a compact subset of $X$ with the property that $f(A)=A$ and $A$ attracts all bounded sets of $X$.
Understanding the dynamics of $f$ restricted to $A$ provides an understanding of the asymptotic dynamics of $f$. Our characterization is based on topological techniques, thus conceptually our goal is to understand the action of $f$ on open sets.
In this spirit, a choice of open covering of $A$ can be viewed as a choice of scale at which we wish to study the dynamics, and the compactness of $A$ allows us to assume that this covering is finite. Thus, if we identify data points with open sets, then compactness appears to be the minimal assumption that allows one to claim that the dynamics of $f$ can be characterized via a finite set of data points.

An alternative assumption is that there exists a finite-dimensional manifold upon which the dynamics can be predicted, modeled, or interpreted~\cite{lusch:kutz:brunton:2018, champion:lusch:kutz:brunton:2019, linot:graham:2022, floryan:graham:2022}.
Success of this approach depends on the implicit assumption that the finite-dimensional manifold is robust with respect to error and parameters. Abstractly, this is essentially equivalent to the existence of an \emph{inertial manifold}, i.e., a finite-dimensional, invariant, Lipschitz manifold that attracts all trajectories exponentially (see \cite{foias:sell:temam, constantin:foias:nicolaenko:temam, mallet-paret:sell}).

The existence of a global attractor is a relatively weak assumption that holds for a wide range of dynamical systems and is robust with respect to a wide range of perturbations (for a general discussion see \cite{hale1,hale2} and references therein). In contrast the existence of inertial manifolds is much more limited (see \cite{mallet-paret:sell:shao, zelik} and references therein). This suggests that it is safer to pursue the characterization of global dynamics by assuming the existence of a compact global attractor as opposed to the existence of a finite-dimensional manifold.

With this discussion in mind and for the sake of simplicity of exposition, for the remainder of this paper we assume that $f\colon X\to X$ and $g\colon Z\to Z$ are continuous functions on compact metric spaces $(X,d_X)$ and $(Z,d_Z)$, respectively.
Recall that $f$ and $g$ are \emph{semiconjugate} if there exists a surjective continuous map $E\colon X\to Z$ such that the following diagram commutes
\begin{equation}
  \label{eq:semiconjugacy}
  \begin{tikzcd}
    X \arrow{r}{f} \arrow[swap]{d}{E} & X \arrow{d}{E} \\
    Z \arrow{r}{g}  & Z.
  \end{tikzcd}
\end{equation}

Classical methods tend to focus on understanding the existence and  structure of \emph{invariant sets}, i.e., sets $S\subset X$ such that $f(S) = S$. The following theorem highlights the connection between $\sInvset(f)$ and $\sInvset(g)$, the sets of all invariant sets of $f$ and $g$, respectively.

\begin{thm}
  \label{thm:semiconjugacy}
  Let $X$ and $Z$ be compact metric spaces. Assume that $E\colon X\to Z$ is a continuous surjection and that the diagram \eqref{eq:semiconjugacy} commutes. Then
  \begin{enumerate}
    \item[(i)] If $S\in \sInvset(f)$, then $E(S)\in \sInvset(g)$.
    \item[(ii)] If $S\in \sInvset(g)$, then there exists $\hat{S}\in \sInvset(f)$ such that $E(\hat{S})=S$.
  \end{enumerate}
\end{thm}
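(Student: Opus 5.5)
Part (i) follows directly from commutativity. If $f(S)=S$, then
\[
g(E(S)) = E(f(S)) = E(S),
\]
so $E(S)\in\sInvset(g)$. No compactness is needed for this part.

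For part (ii), let $S$ satisfy $g(S)=S$. The natural first candidate is the preimage $T_0 := E^{-1}(S)$. It is forward invariant in the weak sense: if $x\in T_0$, then $E(f(x)) = g(E(x)) \in g(S) = S$, so $f(T_0)\subset T_0$. It also surjects onto $S$ because $E$ is surjective. However, $f(T_0)$ may be strictly smaller than $T_0$. I would therefore take the maximal invariant subset
\[
\hat S := \bigcap_{n\ge 0} f^n(T_0).
\]
We should first replace $S$ by its closure, or assume invariant sets are closed/compact as is implicit. In that case $T_0$ is closed in the compact space $X$, hence compact. Each $f^n(T_0)$ is then compact, and the sequence $f^n(T_0)$ is nested decreasing because $f(T_0)\subset T_0$.

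The key steps in order are as follows.
\begin{enumerate}
\item Show $f(\hat S)=\hat S$. The inclusion $f(\hat S)\subset \hat S$ is clear from nestedness. For the reverse, take $y\in\hat S$. For each $n$, the set $f^{-1}(y)\cap f^n(T_0)$ is nonempty and compact. These sets are nested, so by the finite intersection property their intersection is nonempty, and it yields a preimage of $y$ lying in $\hat S$.
\item Show $E(f^n(T_0)) = S$ for every $n$. We have $E(f^n(T_0)) = g^n(E(T_0)) = g^n(S) = S$.
\item Pass the equality in step 2 to the intersection. The inclusion $E(\hat S)\subset S$ is immediate. For the reverse, fix $s\in S$. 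The sets $E^{-1}(s)\cap f^n(T_0)$ are nonempty by step 2, compact, and nested. Their intersection is therefore nonempty, giving a point of $\hat S$ mapping to $s$.
\end{enumerate}

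The main obstacle is the passage from the finite stages to the infinite intersection in steps 1 and 3. Both are handled by the same Cantor-intersection argument, which uses compactness of $X$, continuity of $f$ and $E$, and closedness of $S$. If $S$ is not assumed closed, one can instead build $\hat S$ as the union of the images of full backward--forward orbits. Concretely, for $s\in S$ choose a bi-infinite $g$-orbit through $s$ inside $S$, which exists since $g(S)=S$. Then lift it using compactness of the inverse-limit space of $f$-orbits over $E^{-1}$ of that orbit. This is the same compactness argument carried out on the product space $X^{\Z}$ via Tychonoff. I would present the closed case first and remark on this variant.
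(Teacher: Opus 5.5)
Part (i) is the same as the paper's argument. For part (ii), your Steps 1--3 are correct when $S$ is closed, but closedness is not implicit in this paper: $\sInvset(g)$ is defined as all $S\subset Z$ with $g(S)=S$. Replacing $S$ by $\cl(S)$ is not a valid reduction. It is true that $g(\cl(S))=\cl(S)$, but your construction applied to $\cl(S)$ only gives $E(\hat S)=\cl(S)$, not $S$. For non-closed $S$ the sets $f^n(E^{-1}(S))$ need not be compact, so the Cantor-intersection arguments in Steps 1 and 3 are unavailable. The trajectory-lifting variant you relegate to a remark is therefore what proves the theorem as stated, and it should be the main argument. That variant is correct. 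Fix a full $g$-trajectory $\gamma\colon\Z\to S$. Any finite window $[-n,n]$ lifts: take $x\in E^{-1}(\gamma(-n))$, which is nonempty by surjectivity, and push it forward by $f$, using $E\circ f=g\circ E$. Hence the closed subsets of $X^{\Z}$ cut out by the constraints have the finite intersection property, and a lift $\hat\gamma$ exists. Each $\hat\gamma(\Z)$ is $f$-invariant, so the union of such orbit images over $s\in S$ is invariant and maps onto exactly $S$.

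That variant is essentially the paper's proof, carried out in $X$ rather than in $X^{\Z}$. The paper sets $K_n(\gamma)=f^n(E^{-1}(\gamma(-n)))$. These sets are nonempty and compact because they are built from fibers over single points, whatever the topology of $S$. They are nested by commutativity, and $E(K_n(\gamma))=\{\gamma(0)\}$. Their intersection $K(\gamma)$ is precisely the set of time-zero points of lifts of $\gamma$. The paper shows $f(K(\gamma))=K(\gamma^+)$ by the same preimage argument as your Step 1 and takes $\hat S=\bigcup_\gamma K(\gamma)$, which equals $\Inv(E^{-1}(S),f)$. Your closed-case route reaches the same set, $\bigcap_n f^n(E^{-1}(S))=\Inv(E^{-1}(S),f)$, through a single global nested intersection without ranging over trajectories, and it shows $\hat S$ is compact. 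The paper's fiberwise route gives up that simplicity in exchange for working with arbitrary invariant $S$, which is what the statement requires.
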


Semiconjugacies, and more generally topologically conjugacies,
can be approximated with autoencoder architectures from machine learning in which $E$ is an encoder and $g$ is a latent dynamics model \cite{bramburger:brunton:kutz:2021, carney:gonzaleztokman:kardkasem:zhang:2025, bevanda:kirmayr:sosnowski:hirche:2022, bizzi:nissenbaum:pereira:2025, bramburger:2024,kvalheim:sontag:autoencoding}.
The approach has also been used to estimate regions of attraction in the full state space in robotics \cite{morals,werner:etal:halo}.

There are at least two reasons why Theorem~\ref{thm:semiconjugacy} is of limited use in data-driven settings.
First, even explicit knowledge of $g$ does not by itself provide an explicit description of $\sInvset(g)$.
Second, the commutativity of \eqref{eq:semiconjugacy} is too much to expect; one can only hope for the existence of a reasonable bound on the \emph{residual},
\begin{equation}
  \label{eq:residual}
  \sup_{x\in X}d_Z\bigl(g(E(x)),E(f(x))\bigr).
\end{equation}
In this case we say that $f$ and $g$ are \emph{approximately semiconjugate}.
Bifurcation theory tells us that even in the ideal case in which $X=Z$ and $E$ is the identity, if $g\neq f$ then there is no obvious relationship between $\sInvset(f)$ and $\sInvset(g)$.

In this paper we do not focus on how best to learn an autoencoder and latent dynamics.
Rather, our goal is to identify what can be rigorously inferred once $E$ and $g$ have been provided. Our work is heavily motivated by \cite{gameiro:gelb:mischaikow}, which showcases the recovery of combinatorial Conley-Morse invariants for low-dimensional data-driven systems.
A key step in that paper is the identification of attracting blocks.

\begin{defn}
  \label{defn:attractingblock}
  A compact set $N\subset X$ is an \emph{attracting block} for $f$ if $f(N)\subset \Int(N)$ where $\Int$  denotes  interior.
  The \emph{tolerance} of $N$ with respect to $f$ is defined to be
  \[
    \tau_f(N) := \sup\setdef{\nu >0}{B_\nu(f(N))\subset N},
  \]
  where $B_\nu(f(N))$ is the open $\nu$-neighborhood of $f(N)$.
\end{defn}

Attracting blocks allow us to identify the existence of invariant sets.

\begin{defn}
  \label{def:attractor}
  A set $A\subset X$ is an \emph{attractor} if there exists an attracting block $N\subset X$ such that
  \begin{equation}
    \label{eq:att}
    A = \omega(N,f) := \bigcap_{n\geq 0}\cl\left(\bigcup_{k=n}^\infty f^k(N)\right) \subset \Int(N),
  \end{equation}
  where $\cl$ denotes closure.
\end{defn}

A standard result (see \cite{kalies:mischaikow:vandervorst:14}) is that an attractor is an invariant set.
Whereas bifurcation theory tells us that invariant sets, e.g., attractors, can be sensitive to small perturbations, attracting blocks are robust. More precisely, given an attracting block $N$ for $f$, if $\widetilde f\colon X\to X$ is continuous and $\sup_{x\in X}d_X(f(x),\widetilde f(x))<\tau_f(N)$, then $N$ is an attracting block for $\widetilde f$.
Lifting this observation to the setting of autoencoders and latent dynamics leads to the following theorem.

\begin{thm}
  \label{thm:main_simple}
  Let $(X,d_X)$ and $(Z,d_Z)$ be compact metric spaces. Suppose that the functions $f\colon X \to X$, $g \colon Z \to Z$, and $E\colon X \to Z$ are continuous. If $N$ is an attracting block for $g$ and
  \begin{equation}
    \label{eq:loc_semiconjugacy}
    \sup_{x\in E^{-1}(N)}d_Z\bigl(g(E(x)),E(f(x))\bigr)<\tau_g(N),
  \end{equation}
  then $E^{-1}(N)$ is an attracting block  for $f$.
\end{thm}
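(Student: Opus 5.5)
Set $M := E^{-1}(N)$. We must show that $M$ is compact and that $f(M)\subset \Int(M)$. Compactness is immediate: $N$ is compact, hence closed in $Z$; $E$ is continuous, so $M$ is closed in the compact space $X$, and therefore compact.

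For the inclusion, let $\delta$ denote the supremum in \eqref{eq:loc_semiconjugacy}. Since $\delta<\tau_g(N)$, the definition of $\tau_g(N)$ as a supremum gives some $\nu$ with $\delta<\nu$ and $B_\nu(g(N))\subset N$. (If $\tau_g(N)$ is attained or infinite we simply pick any admissible $\nu>\delta$.) Now fix $x\in M$. Then $E(x)\in N$, so $g(E(x))\in g(N)$, and
\[
  d_Z\bigl(g(E(x)),E(f(x))\bigr)\le \delta<\nu .
\]
Hence $E(f(x))\in B_\nu(g(N))$.

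The key point is that the open set $U:=B_\nu(g(N))$ lies inside $N$, and therefore inside $\Int(N)$. It follows that $E^{-1}(U)$ is open in $X$ by continuity of $E$, and it is contained in $E^{-1}(N)=M$. So $E^{-1}(U)\subset \Int(M)$. Since $f(x)\in E^{-1}(U)$, we get $f(x)\in\Int(M)$, and thus $f(M)\subset\Int(M)$.

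The main subtlety is passing to interiors. In general $E^{-1}(\Int N)\subset \Int(E^{-1}(N))$, and this is the only direction we need; the argument above obtains it through the open neighborhood $U$. The other point needing care is the strict inequality with the supremum $\tau_g(N)$, which is handled by the choice of $\nu$ above. The edge case $M=\emptyset$ is trivially an attracting block, and surjectivity of $E$ is never used.
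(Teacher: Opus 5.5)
Your proof is correct and matches the paper's argument essentially step for step. You choose $\nu>\delta$ (the paper's $\epsilon>\rho$) with $B_\nu(g(N))\subset N$, note that this open set lies in $\Int(N)$, and pull back via continuity of $E$ to get $f(E^{-1}(N))\subset \Int(E^{-1}(N))$; your explicit treatment of the empty-preimage edge case is a small detail the paper leaves implicit.
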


An immediate corollary of Theorem~\ref{thm:main_simple} is that $\omega(E^{-1}(N),f)$ is an attractor for $f$.
An important caveat is that given distinct attracting blocks $N$ and $N'$  for $g$, if $N \cap N' \neq \emptyset$ it is possible that $\omega(E^{-1}(N),f) = \omega(E^{-1}(N'),f)$.
In other words, finding distinct attracting blocks for $g$ does not necessarily imply that distinct attractors for $f$ have been identified.

Having introduced the motivation and theoretical results for our approach, we turn to the application of these ideas.
If one is working directly with data, then both $f$ and its phase space $X$ may be unknown.
For many problems of interest $X$ is high-dimensional, which makes an explicit global understanding of how $f$ acts on $X$ impractical (if it were available, the value of learning latent dynamics would be questionable).
What is assumed to be known are the learned continuous functions, that is, the encoder $E\colon X\to Z$ and the latent dynamics $g\colon Z\to Z$.

In this paper we analyze $g$ using fairly standard computational topological techniques \cite{arai:kalies:kokubu:mischaikow:oka:pilarczyk,bush:gameiro:harker:kokubu:mischaikow:obayashi:pilarczyk,bush:cowan:harker:mischaikow,kalies:mischaikow:vandervorst:05} that are encoded in the software \texttt{CMGDB} \cite{CMGDB}.
In particular, given $g$, \texttt{CMGDB} identifies attracting blocks for $g$, from which we can determine their tolerance $\tau$.
An important caveat is that the computational cost of \texttt{CMGDB} grows exponentially with the dimension of $Z$.
In practice \texttt{CMGDB} is computationally effective if $Z \subset \R^d$ for
$d$ small, and in this paper we limit ourselves to $d=1,2,3$.

Even with the knowledge of $\tau$, in practice determining the validity of  \eqref{eq:loc_semiconjugacy} is either impossible or computationally intractable.
Our perspective is that Theorem~\ref{thm:main_simple} provides a sufficient condition under which information about attractors derived from the known latent dynamics $g$ can be lifted to the unknown dynamics of $f$.
As we make clear in Section~\ref{sec:applications}, the residual bound \eqref{eq:loc_semiconjugacy} appears to be far from a necessary condition, and our calculations provide more information about the dynamics of $f$ than that guaranteed by Theorem~\ref{thm:main_simple}.
We return to this point in Section~\ref{sec:conclusions}.

In summary, we propose that the approach for the analysis of data-driven dynamics described in this paper be viewed as a numerical method.
Theorem~\ref{thm:main_simple} provides a sufficient condition for identifying dynamics of interest.
In practice, validating \eqref{eq:loc_semiconjugacy} is intractable, but nevertheless our computational tools are sufficiently robust to provide insight into systems that are otherwise difficult to ascertain.

The outline for this paper is as follows.
Section~\ref{sec:approximate_semiconjugacy} contains the proof of Theorem~\ref{thm:main_simple}.
Section~\ref{sec:background} reviews Conley theory and \texttt{CMGDB}, the software used to analyze the latent dynamics, and discusses how its computational results characterize the latent dynamics.

Section~\ref{sec:applications} consists of a sequence of examples that demonstrates that our approach effectively characterizes the latent dynamics $g$ and that this characterization provides relevant information about the  dynamics of $f$.
For each example we choose a dataset from which we compute an approximated semiconjugacy $E$ and latent dynamics map $g$.
For the sake of reproducibility the essential computational details are provided in an appendix.
However, we emphasize that the focus of this paper is not on obtaining $E$ or $g$; their optimization is problem specific and beyond the scope of this manuscript.
Our approach takes $E$ and $g$ as given.

As indicated above the results of Section~\ref{sec:applications} exceed our theoretical guarantees.
In Section~\ref{sec:conclusions} we discuss why the condition is conservative and how our approach might be improved.

\section{Attracting blocks under approximate semiconjugacy}
\label{sec:approximate_semiconjugacy}

This section is devoted to the proofs of our theoretical results presented in Section~\ref{sec:introduction}.
Recall that a \emph{full trajectory} of $x \in X$ under $f$ is a function $\gamma_x \colon \Z \to X$ that satisfies $\gamma_x(0) = x$ and $\gamma_x(n+1) = f(\gamma_x(n))$ for all $n \in \Z$.
For $N\subset X$, let $\Inv(N,f)$ denote the \emph{maximal invariant set} in $N$, i.e. the set of points in $N$ that lie on a full trajectory under $f$ contained in $N$.
To prove Theorem~\ref{thm:semiconjugacy} we use the following elementary proposition.

\begin{prop}
  \label{prop:invset}
  Let $f\colon X\to X$ be a continuous function and let $S\subset X$.
  Then, $S\in \sInvset(f)$  if and only if for every $x\in S$ there exists a full trajectory $\gamma_x\colon \Z\to S$.
\end{prop}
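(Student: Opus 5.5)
We prove the two implications separately, using only $\sInvset(f)=\setdef{S\subset X}{f(S)=S}$ and the definition of a full trajectory. No topology is needed; continuity of $f$ plays no role.

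\emph{($\Rightarrow$)} Assume $f(S)=S$ and fix $x\in S$. We build $\gamma_x$ forward and backward from $x$.

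For $n\ge 0$ set $\gamma_x(n)=f^n(x)$. Since $f(S)\subset S$, induction gives $\gamma_x(n)\in S$, and $\gamma_x(n+1)=f(\gamma_x(n))$ holds by construction.

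For negative times we use $S\subset f(S)$. Every $y\in S$ has a preimage $y'\in S$ with $f(y')=y$. Set $\gamma_x(-1)$ to be such a preimage of $x$ in $S$. Recursively, set $\gamma_x(-k-1)$ to be a preimage in $S$ of $\gamma_x(-k)$. This recursive selection uses dependent choice, which is harmless here. Then $\gamma_x(n+1)=f(\gamma_x(n))$ holds for all $n<0$ as well. So $\gamma_x\colon\Z\to S$ is a full trajectory through $x$.

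\emph{($\Leftarrow$)} Assume every $x\in S$ has a full trajectory $\gamma_x\colon\Z\to S$.

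To see $f(S)\subset S$: for $x\in S$ we have $f(x)=f(\gamma_x(0))=\gamma_x(1)\in S$.

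To see $S\subset f(S)$: for $x\in S$ we have $x=\gamma_x(0)=f(\gamma_x(-1))$ with $\gamma_x(-1)\in S$, so $x\in f(S)$.

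Hence $f(S)=S$, i.e.\ $S\in\sInvset(f)$.

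The only step needing any care is the backward extension in ($\Rightarrow$). It requires infinitely many successive preimage choices, each of which must stay in $S$. Surjectivity of $f|_S$ onto $S$, which is exactly the inclusion $S\subset f(S)$, guarantees that each choice is available. The rest is routine.
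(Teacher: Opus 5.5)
Your proof is correct and follows the same route as the paper: forward iterates together with an inductive choice of preimages in $S$ for negative times, and $f(x)=\gamma_x(1)$, $x=f(\gamma_x(-1))$ for the converse. Your side remarks, that continuity of $f$ is never used and that the backward construction relies on dependent choice, are accurate; the paper leaves both points implicit.
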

\begin{proof}
  Suppose $f(S)=S$ and fix $x\in S$. Set $x_0=x$. Since $f(S)=S$, we may inductively choose $x_{-n-1}\in S$ such that $f(x_{-n-1})=x_{-n}$, and for $n\geq 0$ choose $x_n=f^n(x)$. Then $\gamma_x(n)=x_n$ defines a full trajectory in $S$ through $x$. Conversely, if every $x\in S$ admits a full trajectory, then $f(x)=\gamma_x(1)\in S$, so $f(S)\subset S$, while $x=f(\gamma_x(-1))$, so $S\subset f(S)$.
\end{proof}

\begin{proof}[Proof of Theorem~\ref{thm:semiconjugacy}]
  (i) Assume $S\in \sInvset(f)$, so $f(S) = S$.
  We show that $g(E(S)) = E(S)$. Using the semiconjugacy and the invariance of $S$ we have
  \[
    g(E(S)) = E(f(S)) = E(S).
  \]
  Hence $E(S)\in \sInvset(g)$.

  \medskip

  (ii) Assume $S\in \sInvset(g)$, so $g(S) = S$. By Proposition~\ref{prop:invset}, every $z\in S$ lies on a full trajectory under $g$ in $S$. For each full trajectory $\gamma\colon\Z\to S$ and $n\in\N$ let
  \[
    K_n(\gamma) := f^n(E^{-1}(\gamma(-n))).
  \]
  The sets $K_n(\gamma)$ have the following properties:
  \begin{enumerate}
    \item[(i)] \textbf{Compactness:} Since $E$ is continuous and $X$ is compact, each fiber $E^{-1}(\gamma(-n))$ is a nonempty closed, and thus compact, subset of $X$. Since $f$ is continuous, each $K_n(\gamma)$ is a nonempty compact set.

    \item[(ii)] \textbf{Nestedness:} By the commutative diagram \eqref{eq:semiconjugacy},
      \begin{align*}
        E\bigl(f(E^{-1}(\gamma(-(n+1))))\bigr)
        &=g\bigl(E(E^{-1}(\gamma(-(n+1))))\bigr)\\
        &=g(\gamma(-(n+1)))=\gamma(-n).
      \end{align*}
      Hence $f(E^{-1}(\gamma(-(n+1)))) \subset E^{-1}(\gamma(-n))$. Applying $f^n$ to both sides gives $K_{n+1}(\gamma) \subset K_n(\gamma)$.

    \item[(iii)] \textbf{Intersection:} The sets $K_n(\gamma)$ form a nested sequence of nonempty compact sets. Hence, by the Finite Intersection Property, the intersection
      \[
        K(\gamma) := \bigcap_{n=0}^\infty K_n(\gamma)
      \]
      is nonempty. Since $E(K_n(\gamma)) = E(f^n(E^{-1}(\gamma(-n)))) = g^n(E(E^{-1}(\gamma(-n)))) = g^n(\gamma(-n)) = \{\gamma(0)\}$ for all $n \in \N$, we have that $E(K(\gamma)) = \{\gamma(0)\}$.
  \end{enumerate}

  Now define
  \[
    \hat{S} := \bigcup_{\gamma} K(\gamma),
  \]
  where the union is taken over all full trajectories in $S$. Since every point of $S$ lies on a full trajectory,
  \[
    E(\hat{S}) = \bigcup_{\gamma} E(K(\gamma)) = \bigcup_{\gamma} \{\gamma(0)\} = S.
  \]
  For a full trajectory $\gamma$, let $\gamma^+(k):=\gamma(k+1)$. It follows that
  \[
    f(K_n(\gamma)) = f(f^n(E^{-1}(\gamma(-n)))) = f^{n+1}(E^{-1}(\gamma(-n))) = K_{n+1}(\gamma^+).
  \]
  If $y\in\bigcap_{n=0}^\infty f(K_n(\gamma))$, then the sets $f^{-1}(y)\cap K_n(\gamma)$ are nested nonempty compact sets. Their intersection is nonempty, so $y\in f(K(\gamma))$. Consequently,
  \[
    f(K(\gamma)) = \bigcap_{n=0}^\infty f(K_n(\gamma)) = \bigcap_{n=0}^\infty K_{n+1}(\gamma^+) = K(\gamma^+).
  \]
  Since $\gamma\mapsto\gamma^+$ permutes the full trajectories in $S$, it follows that $f(\hat{S})=\hat{S}$. Thus $\hat{S}$ is the required invariant set in $X$.
\end{proof}

We now turn to the main result, which relaxes the semiconjugacy requirement of Theorem~\ref{thm:semiconjugacy} to a residual bound on the preimage of the attracting block under consideration.

\begin{proof}[Proof of Theorem~\ref{thm:main_simple}]
  Let $N$ be an attracting block for $g$ satisfying \eqref{eq:loc_semiconjugacy} and let
  \[
    \rho:=\sup_{x\in E^{-1}(N)}d_Z\bigl(g(E(x)),E(f(x))\bigr).
  \]
  Since $\rho<\tau_g(N)$, there exists $\epsilon>\rho$ such that $B_\epsilon(g(N))\subset N$. By openness, $B_\epsilon(g(N))\subset\Int(N)$. Since $X$ is compact and $E$ is continuous, $E^{-1}(N)$ is compact. The choice of $\rho$ then yields
  \[
    E(f(E^{-1}(N))) \subset B_\epsilon(g(N)) \subset \Int(N).
  \]
  Thus,
  \[
    f(E^{-1}(N)) \subset E^{-1}(\Int(N)) \subset \Int(E^{-1}(N)),
  \]
  where the last inclusion follows from the continuity of $E$. Therefore, $E^{-1}(N)$ is an attracting block.
\end{proof}

As we mentioned in Section~\ref{sec:introduction}, Theorem~\ref{thm:main_simple} implies that, for sufficiently small semiconjugacy error, the preimage of each attracting block for $g$ contains a nonempty invariant set for $f$. To highlight this fact we formulate it as a corollary.

\begin{cor}
  \label{cor:nonempty_lift}
  Assume that $N\subset E(X)$ and $N\neq \emptyset$ satisfies the hypotheses of Theorem~\ref{thm:main_simple}. Then $\Inv(E^{-1}(N), f) \neq \emptyset$.
\end{cor}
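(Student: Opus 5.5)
By Theorem~\ref{thm:main_simple}, $M:=E^{-1}(N)$ is an attracting block for $f$. The plan is to show that $M$ is a nonempty compact set with $f(M)\subset M$, and then take $\omega(M,f)$, or equivalently the nested intersection of the forward images of $M$, as a nonempty invariant subset of $M$.

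\emph{Nonemptiness and compactness of $M$.} Because $N\neq\emptyset$ and $N\subset E(X)$, some $z\in N$ has a preimage, so $M\neq\emptyset$. Since $N$ is compact (hence closed) and $E$ is continuous, $M$ is closed in the compact space $X$, and so $M$ is compact. Note that the hypothesis $N\subset E(X)$ is used only here, since $E$ is not assumed surjective in Theorem~\ref{thm:main_simple}.

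\emph{Forward invariance and nested images.} From $f(M)\subset\Int(M)\subset M$ we get $f^{k+1}(M)\subset f^k(M)$ for all $k$. Each $f^k(M)$ is a nonempty compact set, so $S:=\bigcap_{k\ge0} f^k(M)$ is nonempty by the finite intersection property, and $S\subset M$.

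\emph{Invariance of $S$.} The inclusion $f(S)\subset \bigcap_k f^{k+1}(M)=S$ is immediate. For the reverse inclusion, take $y\in S$ and reuse the argument from the proof of Theorem~\ref{thm:semiconjugacy}. For each $k$, the set $f^{-1}(y)\cap f^k(M)$ is nonempty, because $y\in f^{k+1}(M)$, and it is compact. These sets are nested in $k$, so their intersection is nonempty, which gives $y\in f(S)$. Hence $f(S)=S$, so $S\in\sInvset(f)$ with $S\subset M$.

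\emph{Conclusion.} By definition, $\Inv(M,f)$ contains every invariant subset of $M$. Indeed, by Proposition~\ref{prop:invset} each point of $S$ lies on a full trajectory inside $S\subset M$. Therefore $\emptyset\neq S\subset\Inv(E^{-1}(N),f)$.

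The only delicate step is the invariance $S\subset f(S)$, which requires the compactness argument above. The rest is routine.
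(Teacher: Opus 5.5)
Your proof is correct and follows the paper's argument: $E^{-1}(N)\neq\emptyset$ because $N\subset E(X)$, it is an attracting block by Theorem~\ref{thm:main_simple}, and compactness then yields a nonempty invariant subset. The paper simply cites $\Inv(E^{-1}(N),f)=\omega(E^{-1}(N),f)\neq\emptyset$; your set $S=\bigcap_k f^k(M)$ coincides with $\omega(M,f)$, since the forward images are nested and closed, so you have just written out that standard step, and you correctly need only the inclusion $S\subset\Inv(M,f)$ rather than equality.
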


\begin{proof}
  Since $N\subset E(X)$ and $N\neq\emptyset$, $E^{-1}(N)$ is nonempty. By Theorem~\ref{thm:main_simple}, it is an attracting block for $f$. Since it is compact, $\Inv(E^{-1}(N),f)=\omega(E^{-1}(N),f)\neq\emptyset$.
\end{proof}

\section{Conley theory and Morse representations}
\label{sec:background}

In this section we provide a brief review of the concepts used in the applications described in the next section.
Section~\ref{subsec:conley} discusses Conley theory \cite{conley:cbms} using the language of order theory as developed in \cite{kalies:mischaikow:vandervorst:14, kalies:mischaikow:vandervorst:15, kalies:mischaikow:vandervorst:21} and presents a greatly abridged version of the Conley index that is sufficient for this paper.
Motivated by the results of \cite{gameiro:gelb:mischaikow}, this is the framework through which we hope to characterize the dynamics of $f\colon X \to X$.

Section~\ref{subsec:cmgdb} presents the theoretical underpinning for the software \texttt{CMGDB} that is used to perform computations on the latent dynamics model $g\colon \R^d\to \R^d$.

\subsection{Conley theory}
\label{subsec:conley}
Let $f\colon X\to X$ be a continuous map on a compact metric space.
Throughout this section we set $S := \Inv(X,f)$.

A \emph{Morse representation} of $S$ under $f$ is a finite poset $\sMR := \setdef{M(p)}{p\in \sP,\leq_\sP}$ of mutually disjoint, nonempty, compact invariant subsets of $S$, called \emph{Morse sets}, with the property that for each $x\in S\setminus \bigcup_{p\in \sP}M(p)$ and each full trajectory $\gamma_x\colon \Z\to S$ there exists $p,p'\in \sP$ such that $p < p'$,
\begin{equation}
  \label{eq:MRorder}
  \omega(x,f)\subset M(p)\quad\text{and}\quad \alpha_{\gamma_x}(x,f) := \bigcap_{n<0} \cl\left( \bigcup_{k \leq n}\{\gamma_x(k)\}\right) \subset M(p').
\end{equation}

For much of this paper our goal is to identify Morse representations of $S$.
The motivation for the methodology of this paper is Theorem~\ref{thm:main_simple}, which is stated in terms of attracting blocks.
As we now describe, there is an equivalence between lattices of attractors, lattices of attracting blocks, and Morse representations.

A compact set $N\subset X$ is an \emph{attracting neighborhood} if $\omega(N,f)\subset\Int(N)$. Every attracting block is an attracting neighborhood, and every attractor characterized using an attracting neighborhood admits an attracting block \cite[Section~1.1 and Theorem~1.2]{kalies:kasti:vandervorst}. Thus Definition~\ref{def:attractor} can equivalently be stated using attracting neighborhoods.

\begin{defn}
  Let $A \subset S$ be an attractor of $f$.
  The \emph{dual repeller} of $A$ is defined by
  \[
    A^* := \setdef{x\in S}{\omega(x,f)\cap A = \emptyset}.
  \]
\end{defn}

Let $\sANbhd(f)$ and $\sABlock(f)$ denote the set of attracting neighborhoods and attracting blocks of $f$, respectively.
As is shown in \cite{kalies:mischaikow:vandervorst:14}, $\sANbhd(f)$ is a bounded distributive lattice under the operations $\vee = \cup$ and $\wedge = \cap$ with minimal element $\emptyset$ and maximal element $X$. The same holds for $\sABlock(f)$ \cite[Section~1.1]{kalies:kasti:vandervorst}.
Since every attracting block is an attracting neighborhood, $\sABlock(f)$ is a sublattice of $\sANbhd(f)$.

Let $\sAtt(f)$ denote the set of attractors of $f$.
Then $\sAtt(f)$ is a bounded distributive lattice under the operations $A_0\vee A_1 = A_0\cup A_1$ and $A_0\wedge A_1 = \omega(A_0\cap A_1,f)$, with minimal element $\emptyset$ and maximal element $S$.

By \cite[Corollary~3.6 and Proposition~4.3]{kalies:mischaikow:vandervorst:14}, the map
\[
  \omega(\cdot,f)=\Inv(\cdot,f)\colon \sANbhd(f) \to \sAtt(f)
\]
is a surjective lattice homomorphism. Since every attractor admits an attracting block, the restriction of $\omega(\cdot,f)$ to $\sABlock(f)$ remains surjective.

Recall that a lattice $\sL$ is a poset with partial order defined by $L_0 \leq_\sL L_1$ if $L_0\vee L_1 = L_1$.
If $\sL$ is a finite lattice, then an element $L\in \sL$ is \emph{join irreducible} if $L$ has a unique immediate predecessor with respect to $\leq_\sL$.
We denote the set of join irreducible elements of $\sL$ by $\sJ^\vee(\sL)$, and given $L\in\sJ^\vee(\sL)$ we let $L^<$ denote its unique immediate predecessor.

The following proposition is a restatement of \cite[Theorem 5]{kalies:mischaikow:vandervorst:21}.

\begin{prop}
  \label{prop:MR=Att}
  A finite collection of nonempty, mutually disjoint compact invariant sets $\sMR = \setdef{M(p)}{p\in (\sP,\leq_\sP)}$ whose order relation satisfies \eqref{eq:MRorder} for all $x\in S\setminus \bigcup_{p\in \sP}M(p)$ is a Morse representation of $f$ if and only if there exists a finite sublattice $\sA\subset \sAtt(f)$ containing $\emptyset$ and $S$, together with an order isomorphism $p\mapsto A_p$ from $\sP$ to $\sJ^\vee(\sA)$, such that
  \[
    M(p)=A_p\cap(A_p^<)^*
  \]
  for every $p\in\sP$.
\end{prop}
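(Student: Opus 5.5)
Proposition~\ref{prop:MR=Att} is a restatement of \cite[Theorem 5]{kalies:mischaikow:vandervorst:21}, so I would mostly translate, but I would check both directions directly.

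\textbf{From lattice to Morse representation.} Fix a finite sublattice $\sA\subset\sAtt(f)$ containing $\emptyset$ and $S$, and put $M(p)=A_p\cap(A_p^<)^*$ for $A_p\in\sJ^\vee(\sA)$. I would verify the required properties in turn.
\begin{enumerate}
\item[(a)] \emph{Compact and invariant.} Attractors and dual repellers are compact invariant sets. The intersection of an attractor with the dual repeller of a smaller attractor is again invariant, by the standard attractor--repeller decomposition inside $A_p$.
\item[(b)] \emph{Nonempty.} If $M(p)=\emptyset$, then every $x\in A_p$ satisfies $\omega(x,f)\cap A_p^<\neq\emptyset$. This forces $A_p=A_p^<$, a contradiction.
\item[(c)] \emph{Disjoint.} Take $p\neq q$. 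If $A_p\not\le A_q$, use distributivity: $A_p\wedge A_q\le A_p^<$. Then points of $M(p)\cap M(q)$ have $\omega$-limit sets inside $A_p\cap A_q$ but avoiding $A_p^<$, which is impossible.
\item[(d)] \emph{Order condition \eqref{eq:MRorder}.} Take $x\in S$ outside every Morse set and a full trajectory $\gamma_x$. Let $A$ be the smallest element of $\sA$ containing $\omega(x,f)$, and $A'$ the smallest element containing $\alpha_{\gamma_x}(x,f)$. Both are join irreducible, because $\omega$- and $\alpha$-limit sets are internally chain transitive and so cannot split across a join. One checks $\omega(x,f)\subset M(p)$ with $A=A_p$, and $\alpha_{\gamma_x}(x,f)\subset M(p')$ with $A'=A_{p'}$. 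Strictness $p<p'$ holds because the whole orbit lies in $A_{p'}$ but not in $M(p')$.
\end{enumerate}

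\textbf{From Morse representation to lattice.} Let $\sMR$ be a Morse representation. For each down-set $I\subset\sP$, define
\[
A_I:=\bigcup_{p\in I}M(p)\ \cup\ \setdef{x\in S}{\text{every full trajectory }\gamma_x\text{ has }\alpha\text{- and }\omega\text{-limits in }\textstyle\bigcup_{p\in I}M(p)}.
\]
I would then show three things.
\begin{enumerate}
\item[(a)] Each $A_I$ is an attractor. Isolation is the content here, and I would obtain it via a Lyapunov function or an attracting-block construction as in \cite{kalies:mischaikow:vandervorst:14}.
\item[(b)] The assignment $I\mapsto A_I$ is a lattice homomorphism from down-sets to $\sAtt(f)$, with $A_\emptyset=\emptyset$ and $A_\sP=S$.
\item[(c)] The join irreducibles of the image are exactly the $A_{\downarrow p}$, and $M(p)=A_{\downarrow p}\cap(A_{\downarrow p\setminus\{p\}})^*$.
\end{enumerate}
By Birkhoff's representation theorem for finite distributive lattices, $p\mapsto A_{\downarrow p}$ is then an order isomorphism onto $\sJ^\vee(\sA)$.

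\textbf{Main obstacle.} The key difficulty is showing that $A_I$ is genuinely an attractor, that is, that it is isolated. This requires the finiteness of $\sP$ together with compactness, which rule out chains of connecting orbits accumulating on $A_I$ from outside. I would defer to \cite[Theorem 5]{kalies:mischaikow:vandervorst:21} and \cite{kalies:mischaikow:vandervorst:14} for this step rather than reprove it.
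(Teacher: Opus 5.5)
\textbf{Verdict.} The paper gives no argument of its own for this proposition; it simply cites \cite[Theorem 5]{kalies:mischaikow:vandervorst:21}. Your lattice-to-representation half is a sound outline, and importing the isolation step from that source matches what the paper does.

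\textbf{The gap.} The real problem is your definition of $A_I$ in the converse direction. Here $f$ is only a continuous map, so a point of an attractor can have preimages in $S$ that lie outside that attractor, on connecting orbits coming down from Morse sets indexed outside $I$. Requiring that \emph{every} full trajectory through $x$ have its $\alpha$-limit in $\bigcup_{p\in I}M(p)$ therefore discards genuine points of the attractor, and step (a) fails for the set you defined.

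A concrete instance: let $f\colon[0,1]\to[0,1]$ be continuous with the following properties.
\begin{enumerate}
\item[(i)] Its fixed points are exactly $0$, $1/2$, $1$, and $f(x)>x$ otherwise.
\item[(ii)] $f$ is an increasing bijection of $[1/2,1]$.
\item[(iii)] $f(x)<1/2$ for $x$ slightly below $1/2$.
\item[(iv)] $f(1/4)=3/4$, so points near $1/4$ jump over $1/2$.
\end{enumerate}
Then $M(0)=\{1\}$, $M(1)=\{1/2\}$, $M(2)=\{0\}$ with $0<1<2$ is a Morse representation, and the attractor belonging to $I=\{0,1\}$ is $[1/2,1]$. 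However, $3/4$ has a full trajectory in $S=[0,1]$ with $\gamma(-1)=1/4$, whose backward iterates decrease to $0$. Its $\alpha$-limit is $\{0\}=M(2)$. So your $A_I$ contains $1/2$ and $1$ but omits $3/4$ and all its forward iterates. It is a proper subset of $[1/2,1]$ and not an attractor.

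\textbf{The repair.} Quantify existentially:
\[
A_I:=\setdef{x\in S}{\text{some full trajectory }\gamma_x\colon\Z\to S\text{ has }\alpha_{\gamma_x}(x,f)\subset \bigcup_{p\in I}M(p)}.
\]
With this definition:
\begin{enumerate}
\item[(i)] The Morse sets indexed by $I$ are included automatically, since each $M(p)$ is invariant and closed.
\item[(ii)] The $\omega$-limit condition follows from \eqref{eq:MRorder} because $I$ is a down-set.
\item[(iii)] $f(A_I)=A_I$ is immediate by shifting trajectories.
\item[(iv)] In the example it returns $[1/2,1]$.
\end{enumerate}
What remains is that $A_I$ is closed and isolated, which uses finiteness of $\sP$ and compactness. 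That is exactly the part you intend to import from \cite{kalies:mischaikow:vandervorst:21}.

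The two definitions agree when $f$ is injective on $S$, because full trajectories are then unique; this is presumably why the flow-style definition looked right. For the non-invertible maps considered in this paper, the quantifier matters.
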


We conclude this section with a short review of the Conley index.
For the purposes of this presentation it is sufficient to observe that if $N_0,N_1\in \sABlock(f)$ and $N_0\subset N_1$, then $f$ induces a  map on relative homology $f_*\colon H_*(N_1,N_0) \to H_*(N_1,N_0)$.
The Conley index of $K =\Inv(\cl(N_1\setminus N_0),f)$, denoted by $\Con_*(K)$,  is the shift equivalence class of $f_*$ (see \cite{mischaikow:mrozek,mischaikow:weibel}).
In this paper we restrict our attention to homology with field coefficients.
This implies that $f_*$ is a linear map in which case shift equivalence is determined by the rational canonical form of $f_*$ ignoring the zero eigenvalues of $f_*$.
Thus, we write
\begin{equation}
  \label{eq:CIpoly}
  \Con_*(K) = (q_0(x),q_1(x),q_2(x), \ldots)
\end{equation}
where $q_i(x)$ is the characteristic polynomial of the induced map $f_i$ on $H_i(N_1,N_0)$, with zero eigenvalues omitted.

The following result is fundamental.
\begin{prop}
  \label{prop:CInot0}
  If the Conley index is not trivial, i.e., $\Con_*(K) \neq (0,0,0,\ldots)$, then $K\neq \emptyset$.
\end{prop}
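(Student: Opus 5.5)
We argue by contraposition: assuming $K=\Inv(\cl(N_1\setminus N_0),f)=\emptyset$, we show that the induced map $f_*\colon H_*(N_1,N_0)\to H_*(N_1,N_0)$ is nilpotent. Every eigenvalue of a nilpotent map is zero, so in \eqref{eq:CIpoly} each $q_i(x)$ reduces to the trivial polynomial once zero eigenvalues are discarded, i.e.\ $\Con_*(K)=(0,0,0,\ldots)$.

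\textbf{Step 1.} Set $C:=\cl(N_1\setminus N_0)$, which is compact. If $\Inv(C,f)=\emptyset$, then there is $n$ such that no point $x$ satisfies $x,f(x),\dots,f^n(x)\in C$. Define
\[
C_k:=\setdef{x\in C}{f^j(x)\in C \text{ for } 0\le j\le k}.
\]
These sets are compact and nested. If every $C_k$ were nonempty, their intersection would be a nonempty compact set $C_\infty$ of points whose entire forward orbit lies in $C$. Then $\omega(x,f)\subset C$ for any $x\in C_\infty$. Since $X$ is compact, $\omega(x,f)$ is nonempty, compact and invariant, so Proposition~\ref{prop:invset} gives $\omega(x,f)\subset \Inv(C,f)$. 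This contradicts $\Inv(C,f)=\emptyset$, so some $C_n$ is empty.

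\textbf{Step 2.} Fix $x\in N_1$. Its orbit can leave $C$ only by entering $N_0$, because $N_1$ is forward invariant ($f(N_1)\subset\Int N_1$), and once in $N_0$ it stays there. Step 1 therefore gives $f^{n+1}(N_1)\subset N_0$, and hence $f^{n+1}$ maps the pair $(N_1,N_0)$ into $(N_0,N_0)$. By functoriality,
\[
f_*^{\,n+1}\colon H_*(N_1,N_0)\to H_*(N_0,N_0)=0\to H_*(N_1,N_0),
\]
so $f_*^{\,n+1}=0$. Note that the relative homology map is well-defined precisely because $f(N_i)\subset N_i$. The paper treats the Conley index only as the shift equivalence class of $f_*$, so this suffices. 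With field coefficients $f_*$ is nilpotent in each degree, which is the claimed triviality.

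\textbf{Main obstacle.} The delicate part is the compactness argument of Step 1, which turns the emptiness of the maximal invariant set into a uniform exit time. Checking that the resulting map on homology is well-defined and nilpotent is routine once the pair is positively invariant. If the homology theory is not continuous (for instance, singular homology on wild sets), nilpotency still follows from functoriality alone, so the choice of homology does not matter here.
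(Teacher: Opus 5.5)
Your proof is correct: compactness gives a uniform exit time, so $f^{n}(N_1)\subset N_0$, and then $f_*^{\,n}$ factors through $H_*(N_0,N_0)=0$; this argument is sound and purely functorial. The paper gives no proof of this proposition and simply quotes it as a standard fact of Conley index theory. Your argument is the standard proof of that fact (Step~1 already yields $f^{n}(N_1)\subset N_0$, so the extra iterate is unnecessary but harmless).
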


\begin{rem}
  \label{rem:trivialCon}
  The converse of Proposition~\ref{prop:CInot0} is not true.
  More specifically it is possible that the shift equivalence class of $f_*\colon H_*(N_1,N_0) \to H_*(N_1,N_0)$ is trivial, but $\Inv(\cl(N_1\setminus N_0),f) \neq \emptyset$.
\end{rem}

The Conley index can also be used to prove the existence of equilibria \cite{srzednicki:85,mccord:88}, periodic orbits \cite{mccord:mischaikow:mrozek}, heteroclinic orbits \cite{conley:cbms}, chaotic dynamics \cite{mischaikow:mrozek:95,szymczak:96, day:junge:mischaikow, day:frongillo}, and semiconjugacies onto nontrivial dynamics \cite{mischaikow:95, mccord:mischaikow:96, mccord:00}.

\subsection{\texttt{CMGDB}}
\label{subsec:cmgdb}

We assume that we are given a continuous map $g\colon Z \to Z$, where $Z\subset\R^d$, and restrict our attention to a rectangular region $B = \prod_{i=1}^d [a_i,b_i] \subset Z$.
The first step is to choose a level of discretization of $B$ to produce a cubical complex $\cX$ (see \cite{kaczynski:mischaikow:mrozek:04}).
We refer to the $d$-dimensional cubes of $\cX$ as top cells and denote them by $\cX^\top = \setdef{\xi \in \cX}{\dim(\xi) = d}$.

The software \texttt{CMGDB} produces a combinatorial model for the dynamics of $g$ that takes the form of a \emph{combinatorial multivalued map} $\cG\colon \cX^\top \mvmap \cX^\top$, i.e., for each $\xi \in \cX^\top$, $\emptyset \neq \cG(\xi) \subset \cX^\top$.
Equivalently, $\cG$ can be viewed as a directed graph where $\xi\to \xi'$ if and only if $\xi'\in \cG(\xi)$.
We make use of both perspectives.
For $\cN\subset\cX^\top$, write $|\cN|:=\bigcup_{\xi\in\cN}|\xi|\subset B$ for its geometric realization.

A \textit{walk} in $\cG$ of length $K$ from $\xi_0\in\cXtop$ to $\xi_K\in\cXtop$ is a sequence of vertices
\[
  (\xi_0,\ldots,\xi_K),\qquad \xi_k\in\cG(\xi_{k-1})\quad\text{for }1\leq k\leq K.
\]
We denote a walk from $\xi_0$ to $\xi_K$ by $\xi_0 \walk \xi_K$.
An equivalence relation $\sim$ on $\cXtop$ is given by $\xi \sim \xi'$ if there exist walks $\xi \walk \xi'$ and $\xi' \walk \xi$.

A directed graph is \textit{strongly connected} if there exists a walk between each pair of vertices, namely the quotient set $\cXtop/\sim$ consists of a single equivalence class. The \textit{strongly connected components} $\sSC(\cG)$ of a directed graph $\cG: \cXtop \mvmap \cXtop$ are the equivalence classes of $\cXtop/\sim$, and we denote the quotient map by
\begin{equation}
  \label{eqn:map_pi}
  \pi_\cG\colon \cXtop \to \sSC(\cG).
\end{equation}

By contracting the edges of $\cG$ that have incident vertices in the same strongly connected component, we obtain a graph $\bar{\cG}: \sSC(\cG) \mvmap \sSC(\cG)$ called the \emph{condensation graph}. Observe that $\bar{\cG}$ is acyclic, and therefore $\sSC(\cG)$ is a partially ordered set (poset) where $\zeta' \leq_{\sSC(\cG)} \zeta$ if $\zeta \walk \zeta'$ in $\bar{\cG}$.

To obtain a finer description of the dynamics of $g$, we focus on particular strongly connected components.
A \emph{recurrent component} is a strongly connected component whose induced subgraph contains at least one edge.
The collection of recurrent components is denoted by $\sRC(\cG)$.
Since it is a subset of $\sSC(\cG)$, $\left(\sRC(\cG),\leq_{\sSC(\cG)} \right)$ is a poset.
The poset of recurrent components $\left(\sRC(\cG),\leq_{\sSC(\cG)} \right)$ is called the \emph{Morse graph} of $\cG$ and denoted by $\sMG(\cG)$.

The Morse graph $\sMG(\cG)$ is the combinatorial analog of a Morse representation $\sMR := \setdef{M(p)}{p\in \sP,\leq_\sP}$ for a continuous function.
Throughout the paper, we refer to the collection $\setdef{|\pi^{-1}_\cG(\cM)|}{\cM \in \sMG(\cG)}$ as the regions of phase space corresponding to the Morse graph $\sMG(\cG)$.

We now turn to the analogous lattice structures. Recall that given a poset $(\sP, \leq)$, a subset $I \subset \sP$ is a \emph{downset} if $p\in I$ and $q \leq p$ implies that $q \in I$.
We denote the collection of downsets of a finite poset $\sP$ by $\sO(\sP)$ and remark that $\sO(\sP)$ is a finite distributive lattice with operations $\vee = \cup$ and $\wedge = \cap$, and minimal and maximal elements given by $\emptyset$ and $\sP$.

\begin{defn}
  \label{defn:invset+}
  A \emph{forward invariant set} of $\cG$ is a set $\cN\subset \cXtop$ such that $\cG(\cN)\subset\cN$.
  We denote the set of forward invariant sets of $\cG$ by $\sInvset^+(\cG)$.
\end{defn}

In practice, given $g$ and a reasonable discretization of $B\subset Z$, $\sInvset^+(\cG)$ is an enormous lattice.
Thus, we focus on the following much smaller collection of forward invariant sets.

\begin{defn}
  \label{defn:attcG}
  An \emph{attractor} of $\cG$ is a set $\cA\subset \cXtop$ such that $\cG(\cA) = \cA$.
  We denote the set of attractors of $\cG$ by $\sAtt(\cG)$.
\end{defn}

We leave the proof of the following proposition to the reader.

\begin{prop}
  \label{prop:omegaInv+}
  Let $\cN \in \sInvset^+(\cG)$.
  Then, there exists a unique $\cA\in \sAtt(\cG)$ and $n_\cN\in \Z^+$ such that
  \[
    \omega(\cN,\cG) := \cA = \cG^n(\cN)\quad\text{for all $n\geq n_\cN$.}
  \]
\end{prop}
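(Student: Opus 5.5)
The plan is to use forward invariance to obtain a monotone chain and then invoke finiteness of $\cXtop$. Since $\cN\in\sInvset^+(\cG)$ we have $\cG(\cN)\subset\cN$, where for a set $\cN$ we write $\cG(\cN):=\bigcup_{\xi\in\cN}\cG(\xi)$. Applying $\cG$, which is monotone with respect to inclusion, gives
\[
  \cN\supset\cG(\cN)\supset\cG^2(\cN)\supset\cdots .
\]
By induction, $\cG^{n+1}(\cN)=\cG(\cG^n(\cN))\subset\cG(\cG^{n-1}(\cN))=\cG^n(\cN)$.

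Next I use that $\cXtop$ is finite. A decreasing chain of subsets of a finite set must stabilize, so there is a smallest $n_\cN$ with $\cG^{n_\cN+1}(\cN)=\cG^{n_\cN}(\cN)$. Set $\cA:=\cG^{n_\cN}(\cN)$. Then $\cG(\cA)=\cA$, so $\cA\in\sAtt(\cG)$. Applying $\cG$ repeatedly to the equality $\cG(\cA)=\cA$ gives $\cG^n(\cN)=\cA$ for all $n\ge n_\cN$. The statement requires $n_\cN\in\Z^+$; if stabilization already happens at $0$, I simply take $n_\cN=1$, which still works since the sequence is constant from that point on.

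Uniqueness is immediate. If $\cA'\in\sAtt(\cG)$ and $n'$ satisfy $\cG^n(\cN)=\cA'$ for all $n\ge n'$, take $n\ge\max(n_\cN,n')$ to get $\cA=\cG^n(\cN)=\cA'$. So $\omega(\cN,\cG)$ is well defined.

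One point needs a remark: whether $\cA$ can be empty. Since each $\cG(\xi)\neq\emptyset$, if $\cN\neq\emptyset$ then every $\cG^n(\cN)$ is nonempty. If $\cN=\emptyset$, then $\cA=\emptyset$, which is consistent with $\emptyset\in\sAtt(\cG)$. There is no real obstacle in this argument; the key observation is the monotone chain produced by forward invariance.
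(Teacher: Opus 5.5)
Your proof is correct: forward invariance gives the descending chain $\cG^{n+1}(\cN)\subset\cG^{n}(\cN)$, finiteness of $\cXtop$ forces it to stabilize at a set $\cA$ with $\cG(\cA)=\cA$, and uniqueness of $\cA$ follows from eventual equality. The paper leaves this proof to the reader, and your monotone-chain argument is the standard one that omission presupposes.
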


As is shown in \cite{kalies:mischaikow:vandervorst:15}, $\sInvset^+(\cG) = \sO(\sSC(\cG))$ and $\sAtt(\cG) = \sO(\sRC(\cG))$.
Furthermore, $\sInvset^+(\cG)$ is a finite distributive lattice with operations $\vee = \cup$ and $\wedge =\cap$ and minimal and maximal elements $\emptyset$ and $\cXtop$.
$\sAtt(\cG)$  is a finite distributive lattice with operations $\vee = \cup$ and $\cA_0\wedge\cA_1$
given by $\omega(\cA_0\cap\cA_1,\cG)$, the maximal attractor in $\cA_0\cap\cA_1$.
The minimal element is $\emptyset$, but the maximal element is typically a strict subset of $\cXtop$.

As discussed in Section~\ref{subsec:conley}, Morse representations are related to finite sublattices of attractors. In the combinatorial setting there is an analogous relation between $\sMG(\cG)$ and the lattice of attractors $\sAtt(\cG)$.
In particular, there is a natural isomorphism $\eta \colon \sMG(\cG) \to \sJ^\vee(\sAtt(\cG))$ taking elements of $\sMG(\cG)$ to join-irreducible elements of $\sAtt(\cG)$. Given $\cM \in \sMG(\cG)$, $\eta(\cM)$ is join irreducible and thus has a unique immediate predecessor $\eta(\cM)^<$.

The Conley index of $\cM$, denoted by $\Con_*(\cM)$, is the shift equivalence class of the induced map on homology
\begin{equation}
  \label{eq:CIG}
  \cG_* \colon H_*(\eta(\cM),\eta(\cM)^<) \to H_*(\eta(\cM),\eta(\cM)^<).
\end{equation}
As presented at the beginning of this section, $\cG \colon \cXtop \mvmap \cXtop$.
To compute homology requires use of the full cubical complex $\cX$ and requires that $\cG$ satisfy specific constraints.
The theoretical details can be found in \cite{kaczynski:mischaikow:mrozek:04, harker:mischaikow:mrozek:nanda, harker:kokubu:mischaikow:pilarczyk}.
For the purposes of this paper it is sufficient to remark that \texttt{CMGDB} extends $\cG$ as a combinatorial multivalued map on $\cXtop$ to a combinatorial multivalued map on $\cX$ and checks whether the specific constraints are satisfied.
Since \texttt{CMGDB} computes with field coefficients, it returns the Conley index in terms of the reduced rational canonical forms as indicated in \eqref{eq:CIpoly}.
The Hasse diagram of a Morse graph decorated with Conley indices is called a \emph{Conley-Morse graph}.

\subsection{Mathematical interpretation of \texttt{CMGDB} computations}

\begin{defn}
  \label{defn:outerApproximation}
  Let $g\colon Z\to Z$ be a continuous function.
  A combinatorial multivalued map $\cG\colon \cXtop \mvmap \cXtop$ is an \emph{outer approximation} of $g$ if for every $\xi\in \cXtop$
  \begin{equation}
    \label{eqn:outerApproximation}
    g(|\xi|) \subset \Int(|\cG(\xi)|).
  \end{equation}
  In this case, $g$ is called a \emph{selector} of $\cG$.
\end{defn}
\begin{rem}
  \label{rem:trajectory}
  Let $\cG$ be an outer approximation of $g$ and let $z\in B$. Then $z\in |\xi|$ for some $\xi\in\cXtop$, and $g(z)\in |\xi'|$ for some $\xi'\in\cG(\xi)$.
\end{rem}

The following proposition allows us to transition from information about $\cG$ to information about $g$.

\begin{prop}
  \label{prop:attractingblock}
  \cite{kalies:mischaikow:vandervorst:15}
  Assume $\cG$ is an outer approximation for $g$.
  If $\cN\in \sInvset^+(\cG)$, then $|\cN| \in \sABlock(g)$.
\end{prop}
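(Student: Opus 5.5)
The plan is to show directly that the compact set $|\cN|$ is mapped by $g$ into its own interior. First, $|\cN|$ is compact, because it is a finite union of closed cubes $|\xi|$ with $\xi\in\cN\subset\cXtop$.

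Next, take any $z\in|\cN|$ and choose $\xi\in\cN$ with $z\in|\xi|$. The outer approximation condition \eqref{eqn:outerApproximation} gives
\[
  g(z)\in g(|\xi|)\subset \Int(|\cG(\xi)|).
\]
Since $\cN$ is forward invariant, $\cG(\xi)\subset\cG(\cN)\subset\cN$. Hence $|\cG(\xi)|\subset|\cN|$, and by monotonicity of the interior, $\Int(|\cG(\xi)|)\subset\Int(|\cN|)$. Taking the union over $\xi\in\cN$ yields $g(|\cN|)\subset\Int(|\cN|)$. By Definition~\ref{defn:attractingblock}, this says exactly that $|\cN|$ is an attracting block.

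The one point needing care is which topology the interior is taken in. We should work with $B$ as the ambient space, so that $g$ restricted to the region under study maps into $B$. With $B$ as ambient space, interiors are taken relative to $B$, and then the monotonicity step is immediate. If instead $Z$ is the ambient space, the argument is unchanged provided \eqref{eqn:outerApproximation} is read with interiors in that same space; interior is monotone in any topology.

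The empty case $\cN=\emptyset$ is trivial, since $|\emptyset|=\emptyset$. Beyond these remarks the proof is a two-line inclusion chain, and I expect no genuine obstacle.
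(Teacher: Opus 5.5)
Your proof is correct and is essentially the standard argument. The paper gives no proof of its own and defers to the cited reference, where the result follows from exactly the inclusion chain $g(|\cN|)\subset\Int(|\cG(\cN)|)\subset\Int(|\cN|)$ together with compactness of a finite union of closed cubes, and your point that the outer-approximation condition and the attracting-block definition must use interiors in the same ambient space is the only place requiring care.
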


\begin{defn}
  Assume $\cG$ is an outer approximation for $g$.
  Let $\cA \in \sAtt(\cG)$ and let $A = \omega(|\cA|,g)\in\sAtt(g)$.
  The \emph{maximal region of attraction of $A$ under $\cG$} is the maximal element  $\cN\in \sInvset^+(\cG)$ such that $\omega(\cN,\cG) = \cA$.
\end{defn}

We leave the proof of the following result to the reader.

\begin{prop}
  \label{prop:maximal_roa}
  Assume $\cG$ is an outer approximation for $g$ and let  $\cN\in \sInvset^+(\cG)$ be the maximal region of attraction of $A$ under $\cG$.
  If $z \in |\cN|$, then $\omega(z,g)\subset A$.
\end{prop}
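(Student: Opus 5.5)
The plan is to track the trajectory of $z$ through the cells of $\cN$ and use the finiteness of the combinatorial dynamics. Let $\cA=\omega(\cN,\cG)$, so that $A=\omega(|\cA|,g)$.

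First, $\cN$ is forward invariant, so Proposition~\ref{prop:attractingblock} makes $|\cN|$ an attracting block for $g$; in particular $g(|\cN|)\subset|\cN|$. Hence the whole forward orbit $g^k(z)$ stays in $|\cN|$. By Proposition~\ref{prop:omegaInv+} there is $n=n_\cN$ with $\cG^n(\cN)=\cA$.

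Next I would show by induction on $k$ that $g^k(|\cN|)\subset|\cG^k(\cN)|$. The case $k=0$ is trivial. For the inductive step, Remark~\ref{rem:trajectory} shows that each $y\in|\xi|$ with $\xi\in\cG^k(\cN)$ satisfies $g(y)\in|\xi'|$ for some $\xi'\in\cG(\xi)\subset\cG^{k+1}(\cN)$. A point lying on several cells needs no special treatment, since we only need some cell containing it. Taking $k=n$ gives $g^n(z)\in|\cA|$.

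Finally, $\cA=\cG(\cA)$ is forward invariant, so $|\cA|$ is itself an attracting block by Proposition~\ref{prop:attractingblock}. Therefore
\[
  g^{k}(z)\in g^{k-n}(|\cA|)\quad\text{for } k\geq n,
\]
and consequently
\[
  \omega(z,g)=\bigcap_{m\ge n}\cl\Bigl(\bigcup_{k\ge m}g^k(z)\Bigr)\subset\bigcap_{m\ge n}\cl\Bigl(\bigcup_{j\ge m-n}g^j(|\cA|)\Bigr)=\omega(|\cA|,g)=A.
\]

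The main (mild) obstacle is the hypothesis of maximality. It plays no role in this argument beyond ensuring $\cN\in\sInvset^+(\cG)$ with $\omega(\cN,\cG)=\cA$, so I would note that the conclusion holds for any forward invariant $\cN$ whose combinatorial omega-limit is $\cA$. One should also check that $\omega(z,g)$ is defined via the forward orbit in $B$, which the forward invariance of $|\cN|$ guarantees.
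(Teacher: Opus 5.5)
Your argument is correct, and since the paper leaves this proposition to the reader, it supplies exactly the proof that is missing. The outer-approximation property gives $g(|\cG^k(\cN)|)\subset|\cG^{k+1}(\cN)|$, hence $g^{n_\cN}(z)\in|\cA|$. Nestedness of the omega-limit sets then yields $\omega(z,g)\subset\omega(|\cA|,g)=A$.

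Two small remarks. The step $g^k(z)\in g^{k-n}(|\cA|)$ follows directly from $g^n(z)\in|\cA|$, so you do not need $|\cA|$ to be an attracting block. You are also right that maximality of $\cN$ plays no role.
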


\begin{defn}
  Let $\cG$ be an outer approximation for $g$.
  The \emph{Morse decomposition} of $g$ given by $\cG$ is the poset with elements
  \[
    \sMD(g;\cG) := \setdef{\Inv(|\pi^{-1}_\cG(\cM)|,g)}{\cM\in \sMG(\cG)}
  \]
  and partial order $\leq_{\sMG(\cG)}$.
\end{defn}

As discussed in \cite{kalies:mischaikow:vandervorst:21}, if $\Inv(|\pi^{-1}_\cG(\cM)|,g) \neq \emptyset$ for all $\cM\in \sMG(\cG)$, then $\sMD(g;\cG)$ is a Morse representation of $g$. However, $\cG$ is a combinatorial approximation of $g$, and it is possible that $\Inv(|\pi^{-1}_\cG(\cM)|,g) = \emptyset$ for some $\cM\in \sMG(\cG)$. By Proposition~\ref{prop:attractingblock}, $|\eta(\cM)|,|\eta(\cM)^<| \in \sABlock(g)$. Thus the Conley index of $\Inv(|\pi^{-1}_\cG(\cM)|,g)$ is determined by the shift-equivalence class of
\[
  g_*\colon H_*\bigl(|\eta(\cM)|,|\eta(\cM)^<|\bigr)
  \to H_*\bigl(|\eta(\cM)|,|\eta(\cM)^<|\bigr),
\]
which is equivalent to the shift-equivalence class of $\Con_*(\cM)$ determined by \eqref{eq:CIG}.
Applying Proposition~\ref{prop:CInot0} we obtain the following result.

\begin{prop}
  \label{prop:morse_representation}
  Let $\cG$ be an outer approximation for $g$ that gives rise to a Morse decomposition $\sMD(g;\cG)$ and partial order $\leq_{\sMG(\cG)}$.
  If $\Con_*(\cM)$ is nontrivial for all $\cM\in \sMG(\cG)$, then $\sMD(g;\cG)$ is a Morse representation for $g$.
\end{prop}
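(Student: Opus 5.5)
The plan is to reduce Proposition~\ref{prop:morse_representation} to Proposition~\ref{prop:MR=Att}. The Morse sets will be $M(\cM):=\Inv(|\pi^{-1}_\cG(\cM)|,g)$, indexed by $\sP=\sMG(\cG)$. The attractor lattice will be the image of $\sAtt(\cG)$ under $\cA\mapsto \omega(|\cA|,g)$.

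\textbf{Step 1: the Morse sets are nonempty and well behaved.} By Proposition~\ref{prop:attractingblock}, for each $\cM\in\sMG(\cG)$ both $|\eta(\cM)|$ and $|\eta(\cM)^<|$ are attracting blocks for $g$. Moreover, $\pi^{-1}_\cG(\cM)$ is exactly the set of top cells in $\eta(\cM)\setminus\eta(\cM)^<$, up to the non-recurrent components that carry no invariant set. The discussion preceding the statement then identifies $\Con_*(\Inv(\cl(|\eta(\cM)|\setminus|\eta(\cM)^<|),g))$ with $\Con_*(\cM)$. Since $\Con_*(\cM)$ is nontrivial, Proposition~\ref{prop:CInot0} shows this invariant set is nonempty, so each $M(\cM)\neq\emptyset$.

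Each $M(\cM)$ is compact and invariant, being a maximal invariant set in a compact set. They are mutually disjoint because distinct strongly connected components meet only along cell boundaries. To handle those boundaries I would use the outer approximation property~\eqref{eqn:outerApproximation}: a full $g$-trajectory in $|\pi^{-1}_\cG(\cM)|\cap|\pi^{-1}_\cG(\cM')|$ lifts, by Remark~\ref{rem:trajectory}, to walks in $\cG$. The interior condition forces those walks to stay in both components, which is impossible.

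\textbf{Step 2: the lattice.} I would define $\sA:=\setdef{\omega(|\cA|,g)}{\cA\in\sAtt(\cG)}$. The map $\cA\mapsto|\cA|$ sends $\sAtt(\cG)$ into $\sABlock(g)$ by Proposition~\ref{prop:attractingblock}, and it preserves unions and intersections. Since $\omega(\cdot,g)$ is a lattice homomorphism on $\sABlock(g)$, the composition is a lattice homomorphism. Its image $\sA$ is therefore a finite sublattice of $\sAtt(g)$ containing $\emptyset$.

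To get $S$, I would use that $\omega(|\cX^\top|,g)=\Inv(B,g)$. If the maximal attractor of $\cG$ is strictly smaller than $\cX^\top$, I would note that $\omega(|\cX^\top|,g)=\omega(|\omega(\cX^\top,\cG)|,g)$, since $g^n(|\cX^\top|)\subset|\cG^n(\cX^\top)|$.

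\textbf{Step 3: the order isomorphism (main obstacle).} This step must show that $\cM\mapsto A_\cM:=\omega(|\eta(\cM)|,g)$ is an order isomorphism from $\sMG(\cG)$ onto $\sJ^\vee(\sA)$, with $M(\cM)=A_\cM\cap(A_\cM^<)^*$. The difficulty is injectivity: the homomorphism could collapse, and this is exactly the caveat from the introduction.

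The nontrivial index resolves it. If $\omega(|\eta(\cM)|,g)=\omega(|\eta(\cM)^<|,g)$, then the pair of attracting blocks would have trivial relative index, since the index depends only on the attractor–repeller data. That contradicts $\Con_*(\cM)\neq 0$. Hence each join-irreducible $\eta(\cM)$ maps to an element strictly above the image of its predecessor. Using distributivity, $\sA$ is then isomorphic to $\sAtt(\cG)\cong\sO(\sMG(\cG))$, and join irreducibles correspond.

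Finally, $A_\cM\cap(A_\cM^<)^*=\Inv(\cl(|\eta(\cM)|\setminus|\eta(\cM)^<|),g)$, which is the standard identification of a Morse set with the maximal invariant set between two attracting blocks; this equals $M(\cM)$. The ordering condition~\eqref{eq:MRorder} for points outside the Morse sets then follows from Proposition~\ref{prop:MR=Att}, or directly from Proposition~\ref{prop:maximal_roa} applied to the $\omega$- and $\alpha$-limits.
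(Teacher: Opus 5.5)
Your Step~1 is the paper's argument. The paper identifies the Conley index of $\Inv(|\pi^{-1}_\cG(\cM)|,g)$ with $\Con_*(\cM)$ via the attracting blocks $|\eta(\cM)^<|\subset|\eta(\cM)|$, gets nonemptiness from Proposition~\ref{prop:CInot0}, and then cites \cite{kalies:mischaikow:vandervorst:21} for the fact that nonemptiness of all these sets makes $\sMD(g;\cG)$ a Morse representation. Your Steps~2--3 try to unpack that citation, and most of it is sound.

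One correction in Step~2: $\cA\mapsto|\cA|$ does not preserve meets. The meet in $\sAtt(\cG)$ is $\omega(\cA_0\cap\cA_1,\cG)$, and $|\cA_0|\cap|\cA_1|$ contains shared faces. The composite $\phi(\cA):=\omega(|\cA|,g)$ is still a lattice homomorphism, because $g(|\cA_0|\cap|\cA_1|)\subset\Int|\cA_0|\cap\Int|\cA_1|\subset|\cA_0\cap\cA_1|$, combined with your identity $\omega(|\cN|,g)=\omega(|\omega(\cN,\cG)|,g)$. Your injectivity argument is correct. In a finite distributive lattice, a homomorphism that identifies two elements must identify some covering pair $\eta(\cM)^<\prec\eta(\cM)$. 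That is excluded because $\phi(\eta(\cM))\cap\phi(\eta(\cM)^<)^*=M(\cM)\neq\emptyset$.

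The step that fails as written is the last one. Proposition~\ref{prop:MR=Att}, as stated in the paper, already has \eqref{eq:MRorder} among its hypotheses, so it cannot produce \eqref{eq:MRorder}. Proposition~\ref{prop:maximal_roa} only places $\omega(z,g)$ inside an attractor. It says nothing about landing in a single Morse set, nothing about $\alpha_{\gamma_x}(x)$ along backward trajectories, and nothing about the strict inequality $p<p'$.

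What you need is the actual content of \cite[Theorem~5]{kalies:mischaikow:vandervorst:21}: a finite sublattice of attractors containing $\emptyset$ and $S$ yields the Morse representation $\setof{A_p\cap(A_p^<)^*}$. That is exactly what the paper cites. Alternatively, use the standard argument:
\begin{enumerate}
\item[(a)] For $x\in S$ and $A\in\sA$, either $\omega(x)\subset A$ or $\omega(x)\cap A=\emptyset$.
\item[(b)] The $A$ with $\omega(x)\subset A$ are closed under $\wedge$, since $\omega(x)$ is invariant and $A_0\wedge A_1=\Inv(A_0\cap A_1)$. The least such $A$ is join irreducible, say $A_p$, and so $\omega(x)\subset A_p\cap(A_p^<)^*$.
\item[(c)] Dually, $\alpha_{\gamma_x}(x)\cap A\neq\emptyset$ forces $\gamma_x(\Z)\subset A$. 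Hence the least $A\in\sA$ containing $\gamma_x(\Z)$ is a join irreducible $A_{p'}$ with $\alpha_{\gamma_x}(x)\subset A_{p'}\cap(A_{p'}^<)^*$.
\item[(d)] Then $p\leq p'$, and $p=p'$ would force $x\in M(p)$.
\end{enumerate}
With that replacement, your proof is the paper's proof made explicit.
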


In practice it often happens that $\Con_*(\cM)$ is trivial for some $\cM\in \sMG(\cG)$.
By Remark~\ref{rem:trivialCon}, we cannot immediately determine whether $\Inv(|\cM|,g)$ is empty.
To investigate this case further \texttt{CMGDB} allows for the option of further cubical subdivisions and a refinement of $\cG$ restricted to $\cM$.
If the recurrent component disappears under refinement, then we can conclude that $\Inv(|\cM|,g)= \emptyset$. This gives rise to the following corollary about
\[
  \sMD'(g;\cG) := \setdef{\Inv(|\pi^{-1}_\cG(\cM)|,g)}{\cM\in \sMG(\cG),\ \Con_*(\cM)\neq 0}.
\]

\begin{cor}
  \label{cor:morse_representation}
  Let $\cG$ be an outer approximation for $g$ that gives rise to a Morse decomposition $\sMD(g;\cG)$.  Assume that for all $\cM\in \sMG(\cG)$, either $\Con_*(\cM)$ is nontrivial or $\Inv(|\pi^{-1}_\cG(\cM)|,g) =\emptyset$. Then
  \[
    \sMD'(g;\cG) \subset \sMD(g;\cG)
  \]
  is a Morse representation for $g$.
\end{cor}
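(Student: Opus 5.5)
The plan is to reduce Corollary~\ref{cor:morse_representation} to Proposition~\ref{prop:MR=Att}, using the lattice of combinatorial attractors restricted to the Morse graph elements with nontrivial index. Let $\sP':=\setdef{\cM\in\sMG(\cG)}{\Con_*(\cM)\neq 0}$ with the order inherited from $\leq_{\sMG(\cG)}$. By hypothesis, for $\cM\notin\sP'$ the set $\Inv(|\pi^{-1}_\cG(\cM)|,g)$ is empty. For $\cM\in\sP'$ it is nonempty by Proposition~\ref{prop:CInot0}, using the identification of the Conley index of $\Inv(|\pi^{-1}_\cG(\cM)|,g)$ with $\Con_*(\cM)$ made just before Proposition~\ref{prop:morse_representation}.

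First I would verify the basic properties of the sets in $\sMD'(g;\cG)$. The sets $|\pi^{-1}_\cG(\cM)|$ are closed, so their maximal invariant sets are compact and invariant. They meet only along boundaries of cubes. Disjointness of the invariant parts should follow from the outer approximation property (Definition~\ref{defn:outerApproximation}), because $g(|\xi|)\subset\Int(|\cG(\xi)|)$. Suppose a full $g$-trajectory lay in the realizations of two distinct components. Then by Remark~\ref{rem:trajectory} it would induce walks in $\cG$ in both directions, forcing the two components to coincide. The same style of argument, where trajectories shadow walks in $\cG$, gives the ordering condition \eqref{eq:MRorder}. For $x\in S$ outside all Morse sets, $\omega(x,g)$ and $\alpha_{\gamma_x}(x,g)$ are compact invariant sets. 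Each must lie in some recurrent component's realization, since the forward and backward tails eventually remain in the realization of a single strongly connected component. That component must be recurrent and have nonempty invariant set, hence lie in $\sP'$. The walk from the $\alpha$-component to the $\omega$-component gives the strict inequality, since $x$ itself is not in a Morse set.

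The cleaner route, which I would actually follow, is this: the paper cites \cite{kalies:mischaikow:vandervorst:21} for the fact that $\sMD(g;\cG)$ is a Morse representation whenever all its elements are nonempty. I would reproduce that argument through Proposition~\ref{prop:MR=Att}. Let $\sA$ be the image under $\cN\mapsto\omega(|\cN|,g)$ of the attractors $\eta(\downarrow\!\cM)$ in $\sAtt(\cG)$. By Proposition~\ref{prop:attractingblock} and the surjective lattice homomorphism $\omega(\cdot,g)$, this image is a finite sublattice of $\sAtt(g)$ containing $\emptyset$ and $S$. Removing Morse graph nodes with empty invariant set collapses join-irreducibles: for $\cM\notin\sP'$, $\omega(|\eta(\cM)|,g)=\omega(|\eta(\cM)^<|,g)$, because $A_p\cap(A_p^<)^*$ is exactly the empty invariant set. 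So the join-irreducibles of $\sA$ are in order-isomorphic correspondence with $\sP'$, and $M(p)=A_p\cap(A_p^<)^*$ recovers $\Inv(|\pi^{-1}_\cG(\cM)|,g)$.

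I expect the main obstacle to be this last step. It requires showing that the join-irreducible elements of the image lattice are precisely the images of $\eta(\cM)$ for $\cM\in\sP'$, with the induced order equal to the restriction of $\leq_{\sMG(\cG)}$. It also requires showing that $\omega(|\eta(\cM)|,g)\cap(\omega(|\eta(\cM)^<|,g))^*=\Inv(|\pi^{-1}_\cG(\cM)|,g)$. I would first prove the latter identity directly from the attracting-block inclusions $|\eta(\cM)^<|\subset|\eta(\cM)|$. Then I would deduce that the image of $\eta(\cM)$ is join irreducible exactly when this set is nonempty, and equals the image of its predecessor otherwise.
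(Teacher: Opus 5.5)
Your proposal is correct and is essentially the argument the paper has in mind; the paper states the corollary without a separate proof. The chain is the same: nontrivial $\Con_*(\cM)$ gives nonemptiness via Proposition~\ref{prop:CInot0}, the hypothesis makes the remaining invariant sets empty, and the attractor-lattice correspondence of \cite{kalies:mischaikow:vandervorst:21} (Proposition~\ref{prop:MR=Att}) shows that the nonempty elements of $\sMD(g;\cG)$, with the restricted order, form a Morse representation. Your identity $\omega(|\eta(\cM)|,g)\cap\omega(|\eta(\cM)^<|,g)^*=\Inv(|\pi^{-1}_\cG(\cM)|,g)$ and the resulting collapse of join-irreducibles with empty Morse set are exactly the detail the paper leaves to that citation. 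The one point to handle explicitly is that $\cA\mapsto\omega(|\cA|,g)$ respects meets: $|\omega(\cA_0\cap\cA_1,\cG)|$ and $|\cA_0|\cap|\cA_1|$ differ as sets but have the same $\omega$-limit under $g$.
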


We conclude this section with a straightforward extension of Theorem~\ref{thm:main_simple}.

\begin{cor}
  \label{cor:main}
  Let $(X,d_X)$ and $(Z,d_Z)$ be compact metric spaces and let $f\colon X \to X$, $g \colon Z \to Z$, and $E\colon X \to Z$ be continuous. Let $\sN_g$ be a finite sublattice of $\sABlock(g)$. If every $N\in\sN_g$ is contained in $E(X)$ and satisfies \eqref{eq:loc_semiconjugacy}, then $\sN_f:=\setdef{E^{-1}(N)}{N\in\sN_g}$ is a finite sublattice of $\sABlock(f)$ and $E^{-1}\colon\sN_g\to\sN_f$ is a lattice isomorphism.
\end{cor}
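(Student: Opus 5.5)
The plan is to apply Theorem~\ref{thm:main_simple} to each element separately and then use the fact that taking preimages commutes with unions and intersections. For each $N\in\sN_g$, the hypothesis \eqref{eq:loc_semiconjugacy} together with Theorem~\ref{thm:main_simple} gives $E^{-1}(N)\in\sABlock(f)$. Hence $\sN_f\subset\sABlock(f)$. It is finite, being the image of the finite set $\sN_g$.

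Next I will check closure under the lattice operations. In $\sABlock(g)$ and $\sABlock(f)$ the operations are $\vee=\cup$ and $\wedge=\cap$, by the discussion in Section~\ref{subsec:conley}, and preimages satisfy
\[
E^{-1}(N\cup N')=E^{-1}(N)\cup E^{-1}(N'),\qquad E^{-1}(N\cap N')=E^{-1}(N)\cap E^{-1}(N').
\]
Since $\sN_g$ is a sublattice, $N\cup N'$ and $N\cap N'$ belong to $\sN_g$. Their preimages therefore lie in $\sN_f$, so $\sN_f$ is closed under $\cup$ and $\cap$ and is a sublattice of $\sABlock(f)$. The same identities show that $E^{-1}\colon\sN_g\to\sN_f$ is a lattice homomorphism. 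If "sublattice" is meant in the bounded sense, one also has $E^{-1}(\emptyset)=\emptyset$. If $Z\in\sN_g$, then $E^{-1}(Z)=X$, although the hypothesis $N\subset E(X)$ then forces $E$ to be surjective. I would remark that only whatever bounds $\sN_g$ itself contains are transported.

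Finally, $E^{-1}$ is a bijection onto $\sN_f$. It is surjective by the definition of $\sN_f$. For injectivity, the hypothesis $N\subset E(X)$ gives $E(E^{-1}(N))=N$, so $E$ provides a left inverse on $\sN_g$ and $E^{-1}(N)=E^{-1}(N')$ implies $N=N'$. A bijective lattice homomorphism is a lattice isomorphism, because its inverse automatically preserves $\cup$ and $\cap$.

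The only step needing care is injectivity. Without $N\subset E(X)$, distinct blocks differing only outside the image of $E$ would collapse, and the hypothesis of the statement is exactly what rules this out. The rest is bookkeeping.
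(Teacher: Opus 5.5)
Your proposal is correct and follows the paper's proof: you apply Theorem~\ref{thm:main_simple} to each element, use that preimages commute with finite unions and intersections, and get injectivity of $E^{-1}$ on $\sN_g$ from $N\subset E(X)$ via $E(E^{-1}(N))=N$. You also spell out details the paper leaves implicit: finiteness, why the inverse of a bijective lattice homomorphism is again a homomorphism, and which bounds are transported.
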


\begin{proof}
  Preimages preserve finite unions and intersections, and Theorem~\ref{thm:main_simple} shows that every element of $\sN_f$ is an attracting block for $f$. Since the elements of $\sN_g$ lie in $E(X)$, the inverse-image map is injective on $\sN_g$.
\end{proof}

\section{Examples}
\label{sec:applications}

The goal of this section is to demonstrate how our approach can be used in practice and to suggest the applicability of our method to high-dimensional systems.
The details of the computations are contained in Appendix~\ref{sec:appendix}.
In keeping with the philosophy of this paper, we refrain from optimizing the architecture and hyperparameters of the neural networks used to identify the autoencoder and latent dynamics and just report our choices  in Section~\ref{sec:appendix_data_training}.
Sizes of the training  sets are deliberately kept small to highlight the potential of our method for applications where the available data is limited.

In Section~\ref{sec:2D_Leslie} we illustrate the method on a $10$-dimensional system where we know that the global attractor is contained in a $2$-dimensional subspace, and hence we have a correct baseline against which to compare our computations. In particular, the dynamics on the 2-dimensional subspace are given by an overcompensatory Leslie population model.
Extensive work by Ugarcovici and Weiss \cite{ugarcovici:weiss} shows that the dynamics of this model can be complex and extremely sensitive to parameters.
Their work suggests that this model exhibits the Newhouse phenomenon \cite{newhouse,palis:takens}, and thus even with explicit knowledge of the map, it is impossible to rigorously characterize the dynamics at individual parameter values via invariant sets.

In Section~\ref{sec:3d_leslie}, we compare \texttt{CMGDB} computations for a 3-dimensional overcompensatory Leslie model and a learned latent map. On a grid with box widths that are much smaller than the sampled residual estimates, the latent computation yields an additional minimal node that does not satisfy \eqref{eq:loc_semiconjugacy}, so our main theorem provides no corresponding attractor of $f$. We show that the mismatch can be resolved in two ways, either via coarsening the Conley-Morse graph or coarsening the grid.

In Section~\ref{sec:red_coral} we consider a $13$-dimensional model for the population of Mediterranean red coral.
To the best of our knowledge there are no rigorous results on the global dynamics of this system, but numerical studies have identified a one-dimensional attractor that exhibits bistability.
We recover this qualitative information using learned dynamics on a one-dimensional latent space.

Finally, in Section~\ref{sec:chafee_infante} we consider a partial differential equation for which it is known that the global attractor is five-dimensional, the dynamics on the global attractor is known, and the dynamics exhibits bistability.
Using a $64$-dimensional spectral method, we simulate the partial differential equation and obtain latent dynamics in dimensions one, two, and three.
Since the dimension of the latent dynamics is less than the dimension of the attractor, it is impossible to faithfully recover the original dynamics.
However, we show that we can identify bistability and use the latent system to predict the final state with high precision.

\subsection{Extended Two-dimensional Leslie model}
\label{sec:2D_Leslie}

To provide a higher-dimensional model against which we can test our method we consider a two-dimensional overcompensatory Leslie model
\begin{equation}
  \label{eq:2dLeslie_cont}
  f_{2,\theta}(x):=
  \begin{bmatrix}
    (\theta_1 x_1+\theta_2 x_2)e^{-0.1(x_1+x_2)}\\
    0.7x_1
  \end{bmatrix}
\end{equation}
and extend it to ten dimensions by adding eight contracting coordinates,
\begin{equation}
  \label{eq:Leslie_cont}
  f_{10,\theta}(x):=
  \begin{bmatrix}
    (\theta_1 x_1+\theta_2 x_2)e^{-0.1(x_1+x_2)}\\
    0.7x_1\\
    0.25x_3\\
    0.25x_4\\
    \vdots\\
    0.25x_{10}
  \end{bmatrix}.
\end{equation}
Observe that the global attractor of \eqref{eq:Leslie_cont} lies on the subspace $[0,\infty)^2 \times \setof{0}^8 \subset [0,\infty)^{10}$, and on this subspace is the same as the global attractor of \eqref{eq:2dLeslie_cont}.
Thus we can compare the Morse graph obtained by applying \texttt{CMGDB} to \eqref{eq:2dLeslie_cont} against the results provided by applying \texttt{CMGDB} to a latent model $g$ for \eqref{eq:Leslie_cont}.

For both models we set $\theta=(23.5,23.5)$.
Figures~\ref{fig:lesliecontraction_dynamics}(a)--(b) show the result of applying \texttt{CMGDB} to \eqref{eq:2dLeslie_cont} on the domain $[0,90]\times[0,70]$.
More specifically, Figure~\ref{fig:lesliecontraction_dynamics}(a) shows the Hasse diagram of $\sMG(\cF)$. Written on each node is a label of the form $k: (q_0(x), q_1(x), q_2(x))$ where $k$ identifies the poset element and $(q_0(x), q_1(x), q_2(x))$ denotes the Conley index.
Figure~\ref{fig:lesliecontraction_dynamics}(b) shows a plot of the phase space where for all $\cM \in \sMG(\cF)$, the region $|\pi^{-1}_\cF(\cM)|$ is plotted with a color corresponding to the node in the Hasse diagram.
Node 4 of $\sMG(\cF)$ corresponds to the unstable fixed point at the origin. Since the index of node 4 is trivial, the computation is inconclusive on whether the corresponding invariant set is empty or not. Therefore the poset of invariant sets $\sMD'(f_{2,\theta};\cF)$ does not contain the fixed point and does not satisfy the assumptions of Corollary~\ref{cor:morse_representation}. Thus although $\sMD'(f_{2,\theta};\cF)$ is not a Morse representation, it is still a useful object because it captures the order-relation from $\sMG(\cF)$ on Morse sets whose existence can be inferred from the Conley index.

To learn the encoder and latent map $g\colon Z \to Z$, $Z\subset \R^2$, we set $X=[0,90]\times[0,70]\times[0,100]^8$ and uniformly sampled 8,000 initial points for training and 2,000 for validation, and considered $20$ iterations of each point.
The training set was formed by the points $(f^k_{10,\theta}(x_i),f^{k+1}_{10,\theta}(x_i))$ for $0\leq k\leq 19$.
Figure~\ref{fig:lesliecontraction_dynamics}(c) shows the Morse graph $\sMG(\cG)$ constructed from $g$.

\begin{figure}[htbp]
  \centering
  \begin{subfigure}[b]{0.45\textwidth}
    \centering
    \includegraphics[width=\textwidth]{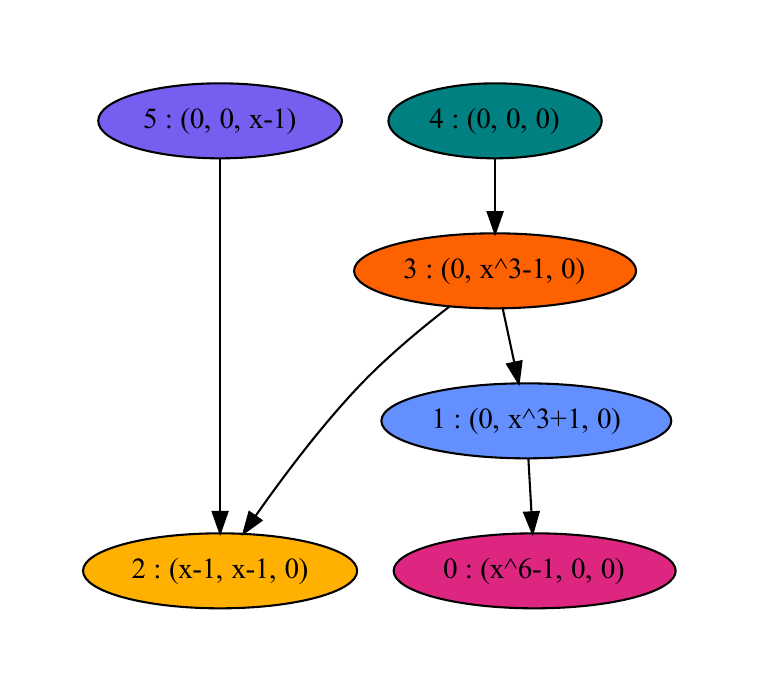}
    \caption{}
  \end{subfigure}
  \hfill
  \begin{subfigure}[b]{0.45\textwidth}
    \centering
    \includegraphics[width=\textwidth]{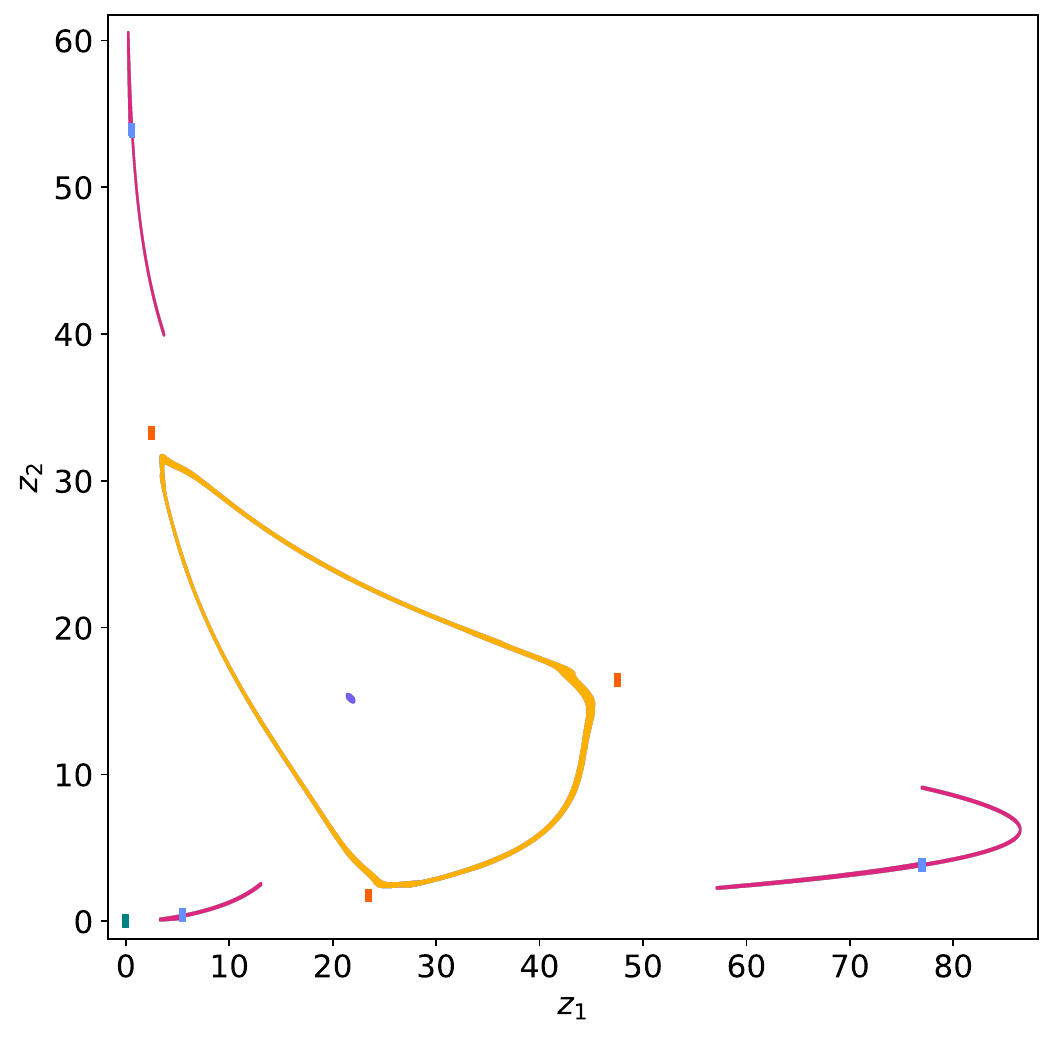}
    \caption{}
  \end{subfigure}
  \par\medskip
  \begin{subfigure}[b]{0.45\textwidth}
    \centering
    \includegraphics[width=\textwidth]{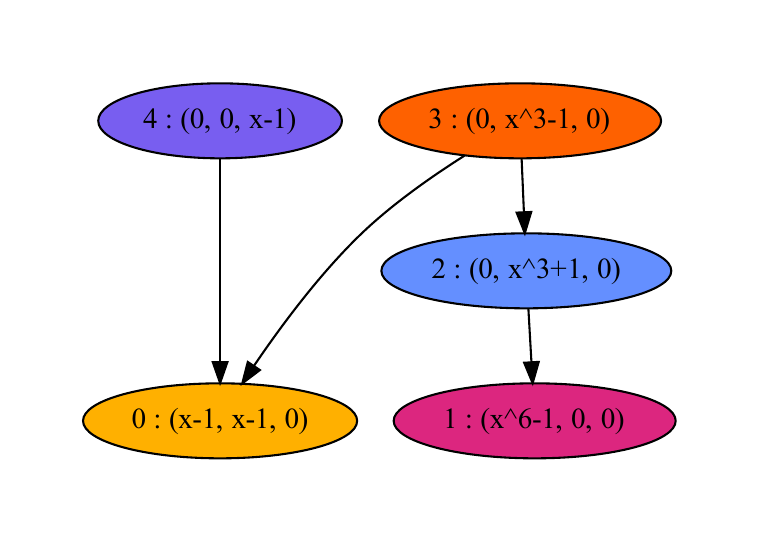}
    \caption{}
  \end{subfigure}
  \hfill
  \begin{subfigure}[b]{0.45\textwidth}
    \centering
    \includegraphics[width=\textwidth]{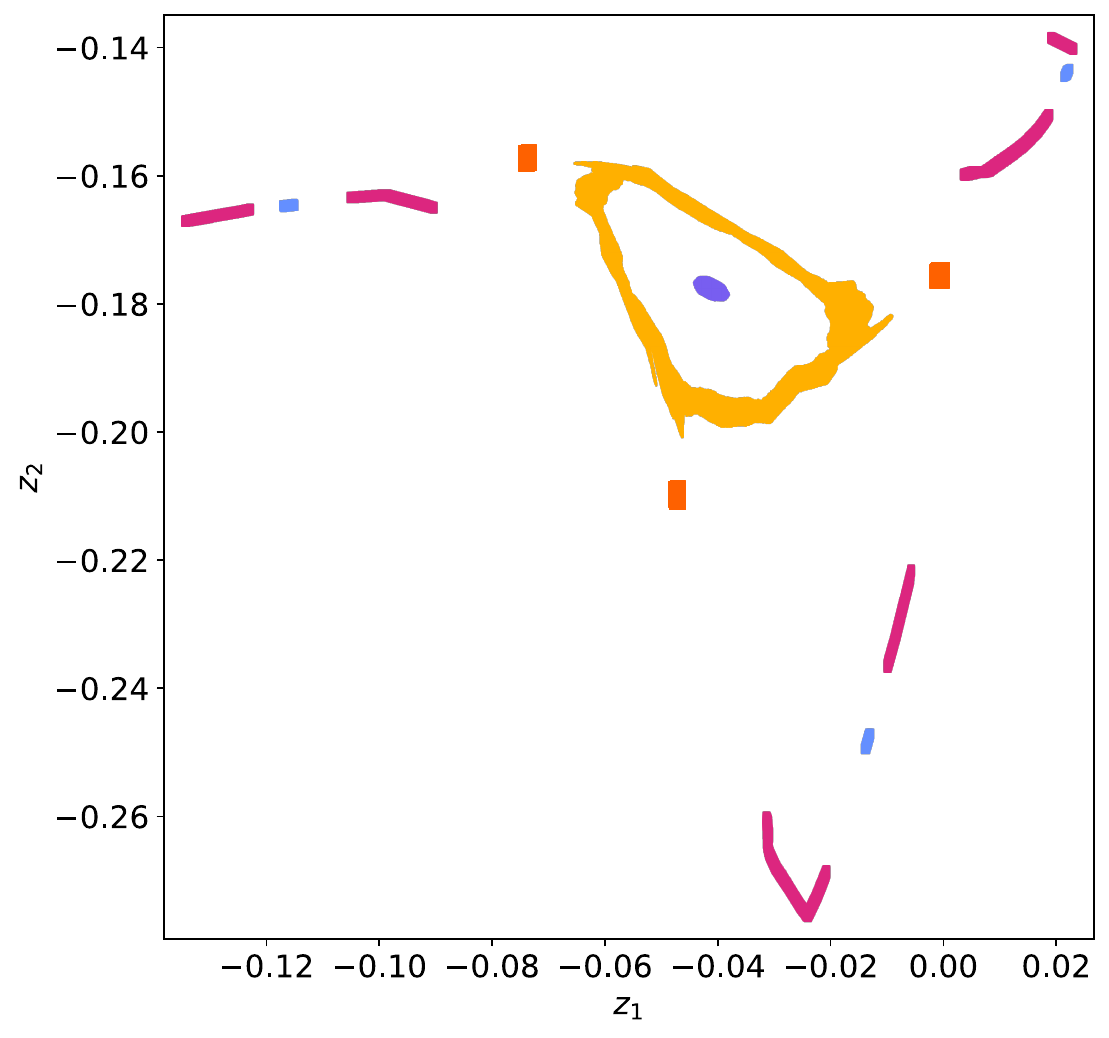}
    \caption{}
  \end{subfigure}
  \caption{
    (a) Hasse diagram of $\sMG(\cF)$, the baseline Morse graph for \eqref{eq:2dLeslie_cont} computed directly from the original two-dimensional subsystem.
    (b) Corresponding color-coded regions of phase space in the original $(x_1,x_2)$ coordinates.
    (c) Hasse diagram of $\sMG(\cG)$, the Morse graph obtained from the latent model for \eqref{eq:Leslie_cont}.
    (d) Corresponding color-coded regions of the latent phase space.
  For visualization purposes, in plot (b) the set $|\pi^{-1}_\cF(k)|$ is enlarged by a factor of $20$ for $k = 0, 2, 5$ and by a factor of $100$ for $k = 1, 3, 4$. In plot (d), $|\pi^{-1}_\cG(k)|$ is enlarged by a factor of $20$ for $k = 1, 2$ and by a factor of $50$ for $k = 3$.}
  \label{fig:lesliecontraction_dynamics}
\end{figure}

There is an isomorphism between $\sMG(\cG)$ and $\sMD'(f_{2,\theta};\cF)$.
It is not surprising, based on the scarcity of training data near the origin, that $\cG$ does not identify any recurrence associated with the unstable fixed point at the origin (see \cite{gameiro:gelb:mischaikow} for a discussion on this point).

Moreover, the geometries in Figures~\ref{fig:lesliecontraction_dynamics}(b) and (d) cannot be directly compared,
which indicates that the encoder $E$ is not the standard projection from $\R^{10}$ to $\R^2$ onto the first two components.
It is therefore noteworthy, given the relative paucity of data in $\R^{10}$, that using $g_1$ it is possible to correctly identify the dynamics up to homology of the invariant sets.

To be more specific, the Conley index of a stable invariant circle is $(x-1,x-1,0)$.
This structure is identified via the latent dynamics.
The index $(x^6-1,0,0)$ identifies a six-component attracting set. The indices $(0,x^3-1,0)$ and $(0,x^3+1,0)$ agree with orientation-preserving and orientation-reversing unstable period-three orbits, respectively.
These structures are identified in the latent computation even though, as shown in Table~\ref{tab:sampled_residual_tolerance}, sampled residuals violate \eqref{eq:loc_semiconjugacy} on the candidate attracting blocks. This agreement suggests that the sufficient condition is conservative.

\subsection{Three-dimensional Leslie model}
\label{sec:3d_leslie}

To examine the interaction between model error and combinatorial resolution, we consider the three-dimensional overcompensatory Leslie population model
\begin{equation}
  \label{eq:Leslie3D}
  f_\theta(x):=
  \begin{bmatrix}
    (\theta_1 x_1+\theta_2 x_2+\theta_3x_3)e^{-0.1(x_1+x_2+x_3)}\\
    0.7x_1\\
    0.7x_2
  \end{bmatrix}
\end{equation}
at the parameter value $\theta=(28.9,29.8,22.0)$ and $X=[0,220]\times[0,154]\times[0,108]$.

For the reference computation, we restrict to $B=[0,110]\times[0,77]\times[0,54]$. The estimates $f_\theta^3(X)\subset B$ and $f_\theta(B)\subset B$ show that every invariant set in $X$ lies in $B$. Using \texttt{CMGDB} we compute a combinatorial multivalued map
$\cF$ on a discretization of $B$. The Conley-Morse graph $\sMG(\cF)$, shown in Figure~\ref{fig:3D_Leslie_direct}, has exactly two minimal nodes, both with index $(x^4-1,0,0,0)$. Stable period-four orbits in the corresponding components are approximately
\[
  \begin{aligned}
    P_0\approx{}&\{(102.59,4.63,0.59),(0.065,71.82,3.24),(1.21,0.045,50.27),(6.61,0.85,0.032)\},\\
    P_1\approx{}&\{(20.09,2.26,21.11),(14.41,14.06,1.58),(43.08,10.09,9.84),(3.23,30.16,7.06)\}.
  \end{aligned}
\]
This establishes the reference against which we compare the latent computations.
We emphasize that $\sMD'(f; \cF)$ captures the dynamics of $f$ at a certain level of resolution, and provides a lower bound for the possible dynamics. As we will see, it is possible for the latent computations to capture more or less fine dynamics while still being consistent with $\sMD'(f; \cF)$.

\begin{figure}[!htbp]
  \centering
  \begin{subfigure}[b]{0.49\textwidth}
    \centering
    \includegraphics[width=\textwidth]{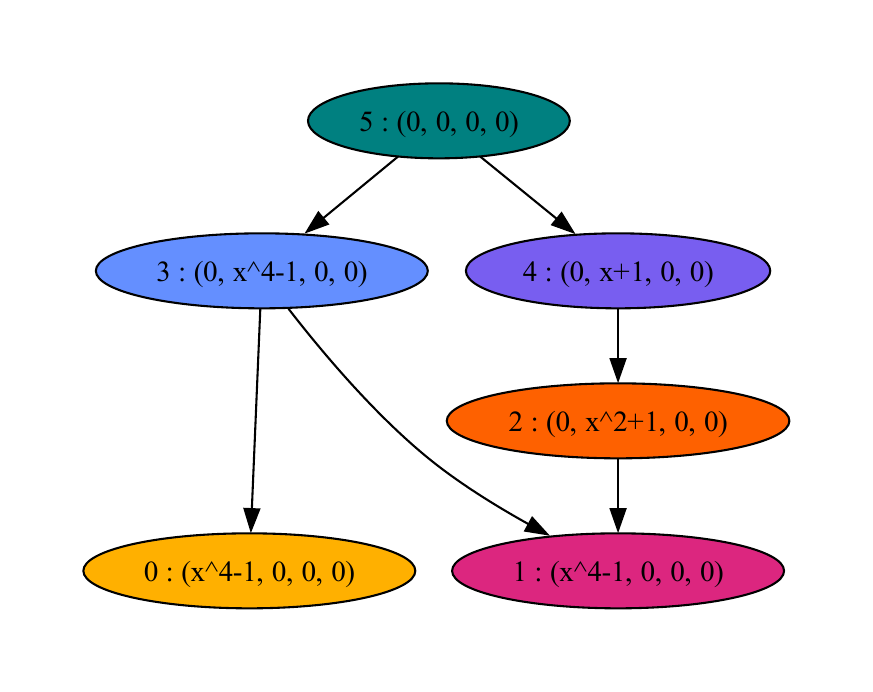}
    \caption{}
  \end{subfigure}
  \hfill
  \begin{subfigure}[b]{0.49\textwidth}
    \centering
    \includegraphics[width=\textwidth]{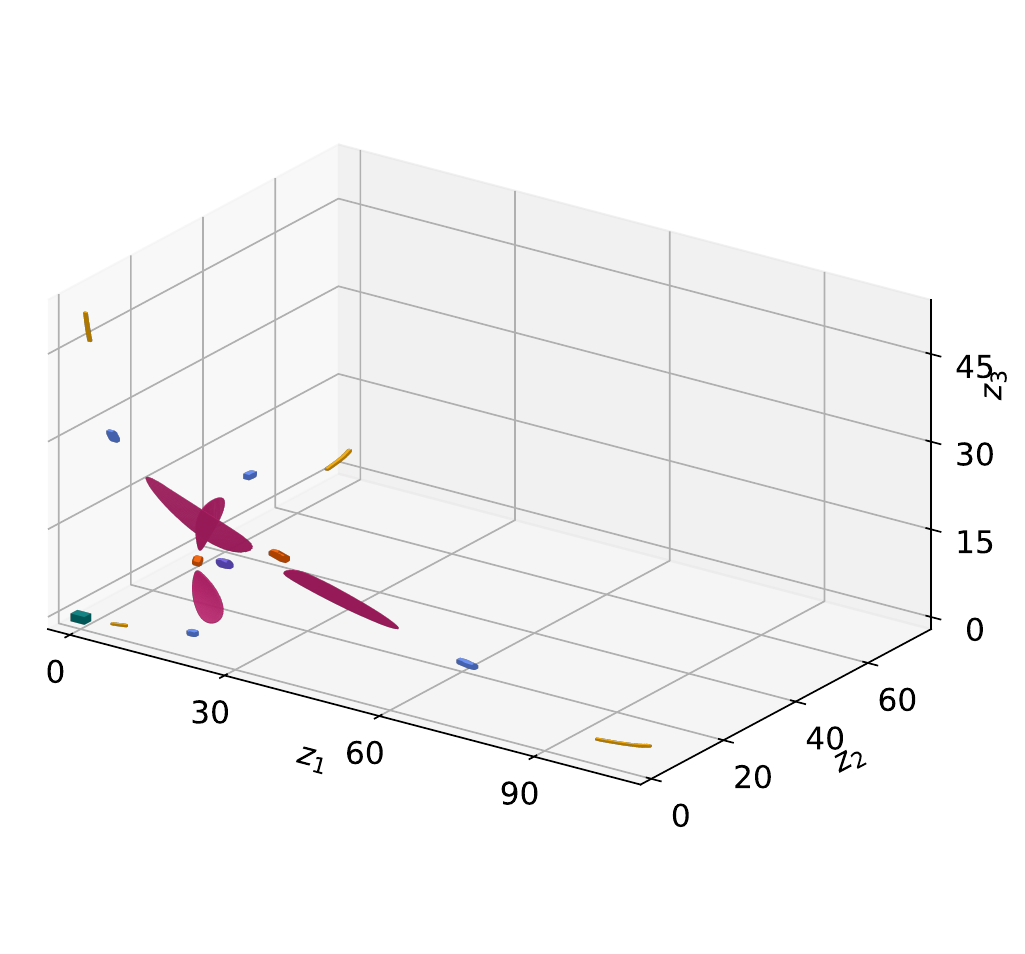}
    \caption{}
  \end{subfigure}
  \caption{
  (a) Hasse diagram of $\sMG(\cF)$, the baseline Morse graph for \eqref{eq:Leslie3D} computed directly from the original system. (b) Corresponding color-coded regions of phase space. The sets $|\pi^{-1}_\cF(0)|$ and $|\pi^{-1}_\cF(1)|$ contain the stable period-four orbits $P_0$ and $P_1$. For visualization purposes, $|\pi^{-1}_\cF(k)|$ is enlarged by a factor of $10$ for $k = 0$, by a factor of $20$ for $k = 2, 3, 4$, and by a factor of $50$ for $k = 5$.}
  \label{fig:3D_Leslie_direct}
\end{figure}

We obtain an encoder and a two-dimensional latent map $g_1$ using $4,000$ uniformly sampled initial conditions, split into $3,200$ training and $800$ validation trajectories. After discarding the first ten iterates, the next twenty iterates are retained.

The resulting Conley-Morse graph $\sMG(\cG_1)$ in Figure~\ref{fig:3D_Leslie_latent_fine_MG} has three minimal nodes, labeled $0$, $1$, and $4$. Nodes $0$ and $1$ have the same attracting period-four indices observed in the reference computation, whereas node $4$ indicates the existence of an attractor for $g_1$ that does not correspond to an attractor in $\sMD'(f;\cF)$. In Table~\ref{tab:sampled_residual_tolerance} we report sampled residual estimates.
The estimates for node $4$ violate \eqref{eq:loc_semiconjugacy}, so Theorem~\ref{thm:main_simple} does not resolve whether $f$
exhibits tristability.

\begin{figure}[!htbp]
  \centering
  \begin{subfigure}[b]{0.45\textwidth}
    \centering
    \includegraphics[width=\textwidth]{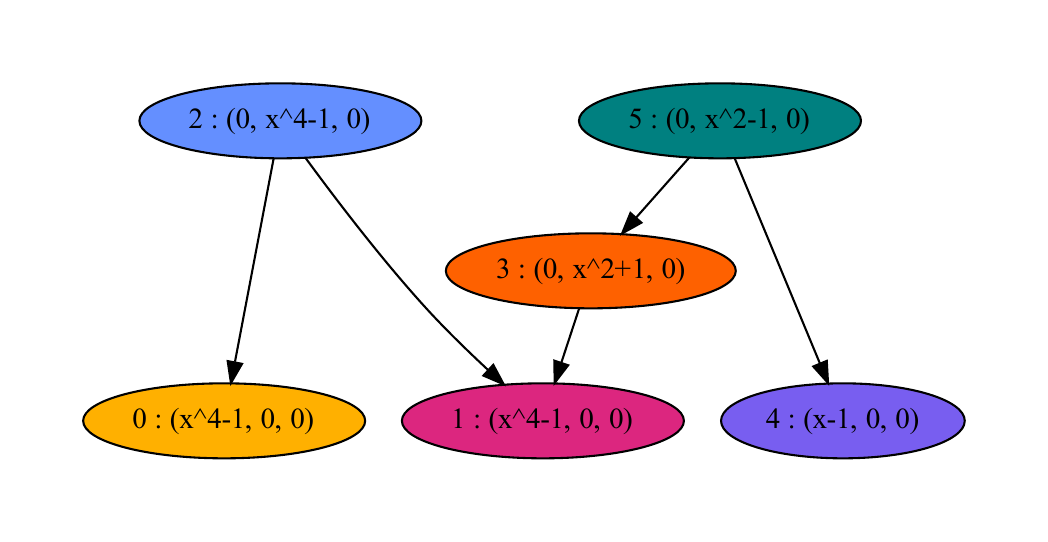}
    \caption{}
    \label{fig:3D_Leslie_latent_fine_MG}
  \end{subfigure}
  \begin{subfigure}[b]{0.45\textwidth}
    \centering
    \includegraphics[width=\textwidth]{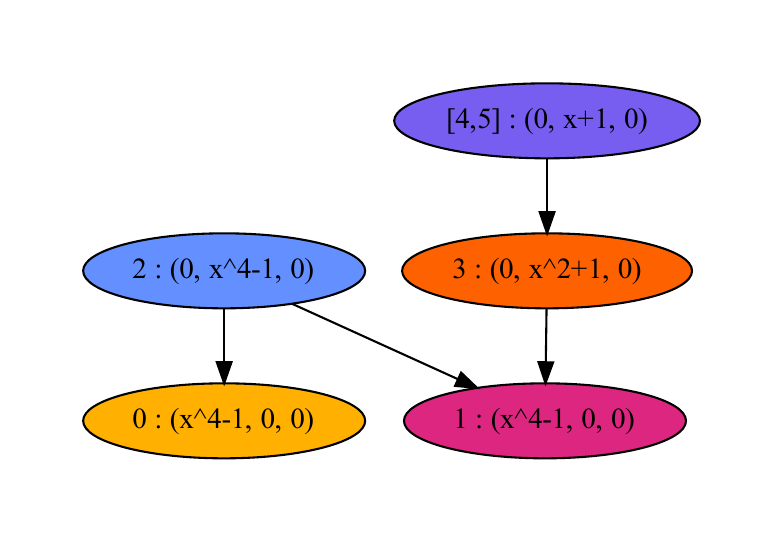}
    \caption{}
    \label{fig:3D_Leslie_latent_merged_morsegraph}
  \end{subfigure}
  \vspace{1cm}
  \begin{subfigure}[b]{0.43\textwidth}
    \centering
    \includegraphics[width=\textwidth]{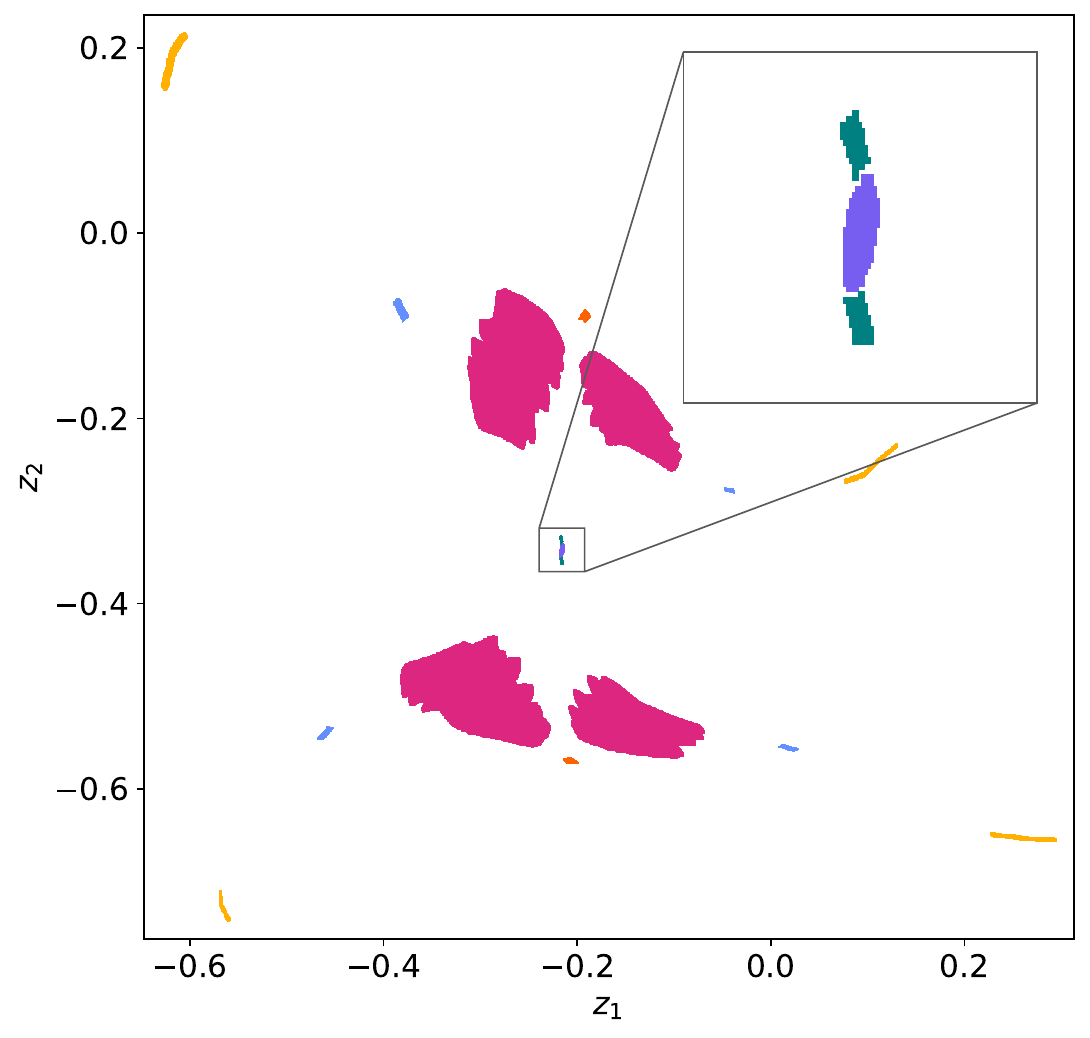}
    \caption{}
    \label{fig:3D_Leslie_latent_fine_morsesets}
  \end{subfigure}\hspace{1cm} 
  \begin{subfigure}[b]{0.43\textwidth}
    \centering
    \includegraphics[width=\textwidth]{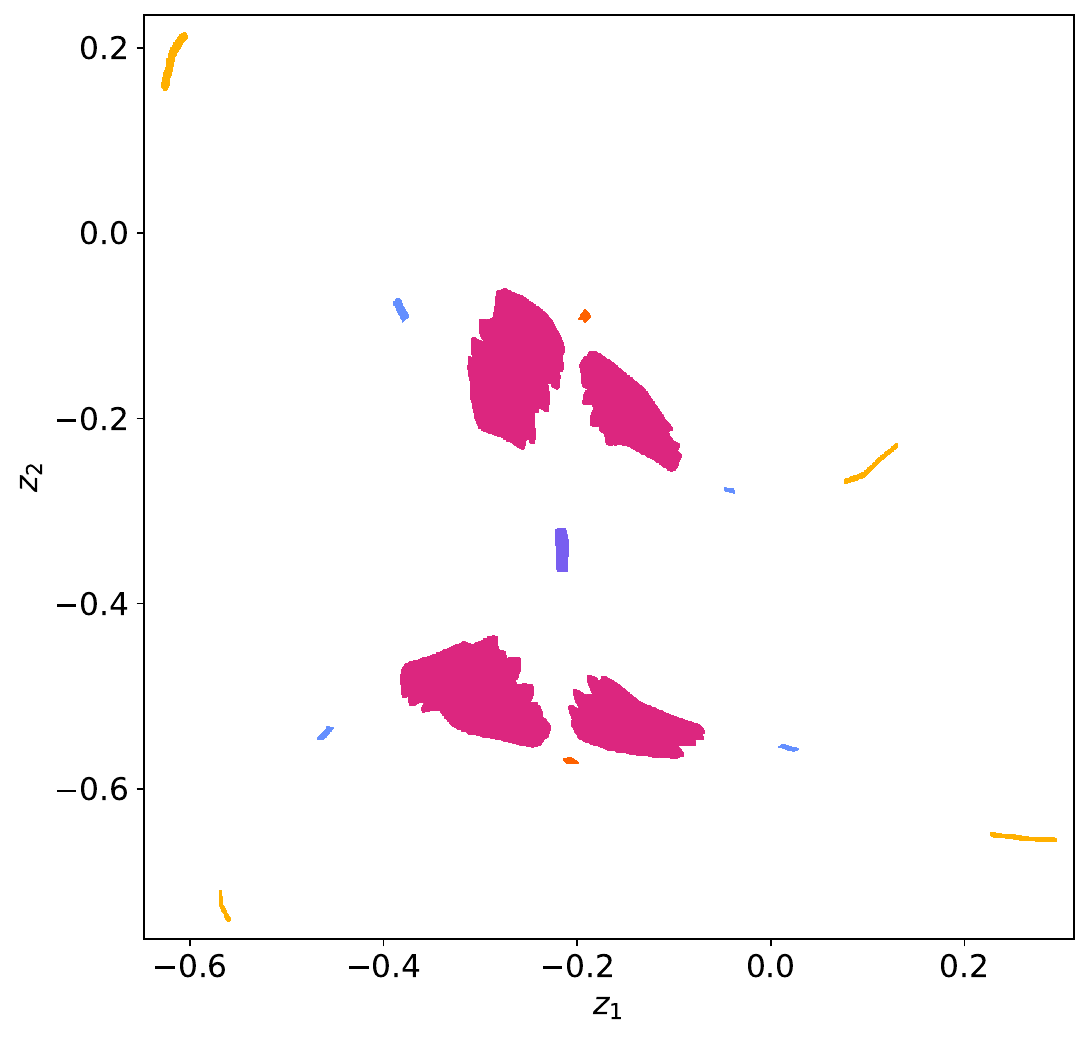}
    \caption{}
    \label{fig:3D_Leslie_latent_merged_morsesets}
  \end{subfigure}
  \caption{
  (a) Hasse diagram of $\sMG(\cG_1)$, where $\cG_1$ is computed from a latent dynamics model of \eqref{eq:Leslie3D}. (b) Hasse diagram of a Morse graph that is obtained by coarsening $\sMG(\cG_1)$. (c, d) Color-coded regions of phase space corresponding to (a) and (b) respectively. In (c) the inset magnifies $|\pi^{-1}_{\cG_1}(4)|$ and $|\pi^{-1}_{\cG_1}(5)|$. In (d) the coarsened region corresponding to Morse node $[4, 5]$ is enlarged by a factor of $20$.}
  \label{fig:3D_Leslie_latent}
\end{figure}

We consider two ways to interpret the results. First, $\sMG(\cG_1)$ identifies dynamics that are finer but consistent with the dynamics expressed by $\sMD'(f;\cF)$. This can be seen by considering a coarsening of $\sMG(\cG_1)$. To obtain the coarsened Morse graph, depicted in Figure~\ref{fig:3D_Leslie_latent_merged_morsegraph}, we identify nodes $4$ and $5$ of $\sMG(\cG_1)$ as a single node $[4,5]$. The region of phase space corresponding to node $[4, 5]$ is defined to be the union of the original regions of phase space for nodes $4$ and $5$ together with the geometric realization of the connecting orbit under $\cG_1$. The resulting set has Conley index $(0,x+1,0)$.

The coarsened Morse graph has two minimal elements and is isomorphic to $\sMD'(f;\cF)$. Moreover, we recover the full Conley index information associated with $\sMG(\cF)$. Using subscripts to distinguish elements of $\sMG(\cG_1)$ and $\sMG(\cF)$, these ideas can be made precise by constructing a poset epimorphism $\nu \colon \sMG(\cG_1) \to \sMD'(f;\cF)$ defined by
\[
  \nu(4_{\sMG(\cG_1)})=\nu(5_{\sMG(\cG_1)})=\Inv(|\pi_{\cF}^{-1}(4_{\sMG(\cF)})|, f)
\]
We see that the finer information yielded by $\sMG(\cG_1)$ is consistent with dynamics expressed by $\sMD'(f; \cF)$, even though the existence of a third attractor for $f$ is not resolved.

As a second approach, we coarsen the cubical grid itself to produce a more coarse, combinatorial multivalued map. This is motivated by the fact that the estimates in Table~\ref{tab:sampled_residual_tolerance} are much larger than the box widths of the grid used to obtain $\sMG(\cG_1)$ (which are approximately $(4.04\times10^{-4},7.83\times10^{-4})$), meaning that $\cG_1$ distinguishes features of $g_1$ at a scale smaller than the sampled discrepancy between $g_1\circ E$ and $E\circ f_\theta$. We produce the new multivalued map $\cG_2$ from $g_1$ on a grid with  boxes of side lengths approximately $(8.08\times10^{-4},7.83\times10^{-4})$.

The Morse graph $\sMG(\cG_2)$ has two minimal nodes. In this case, the sampled tolerances of attracting blocks determined from the two minimal nodes are approximately one box width (see Table~\ref{tab:sampled_residual_tolerance}). The Morse graph obtained by omitting nodes with trivial Conley index is shown in Figure~\ref{fig:3D_Leslie_latent_coarse} and exhibits bistability. The regions associated to nodes $4$ and $5$ of the previous calculation now belong to a single nonminimal recurrent component, so the additional attractor is no longer present at this resolution.

The two minimal nodes have indices $(x^4-1,0,0)$ and $(x^2-1,0,0)$. Thus the coarser computation recovers the two attracting components of the original system, but it does not reproduce the period-four Conley index for both components. This loss of index-level information is compatible with the scope of Theorem~\ref{thm:main_simple}, which provides conditions for lifting attracting blocks and concluding the existence of attractors, while lifting the Conley index remains an open problem.
\begin{figure}[!htbp]
  \centering
  \begin{subfigure}[b]{0.44\textwidth}
    \centering
    \includegraphics[width=\textwidth]{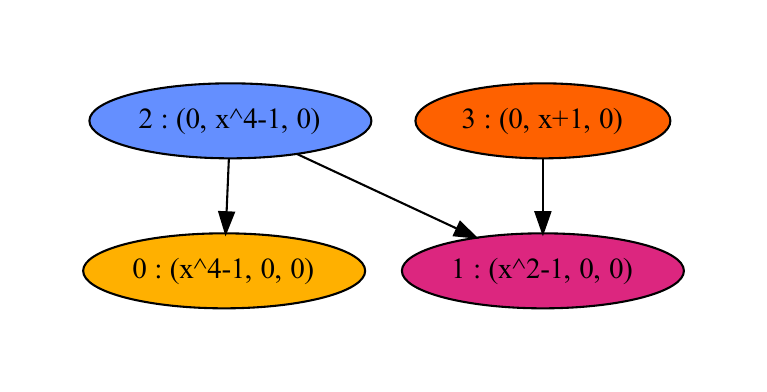}
    \caption{}
  \end{subfigure}
  \hfill
  \begin{subfigure}[b]{0.54\textwidth}
    \centering
    \includegraphics[width=\textwidth]{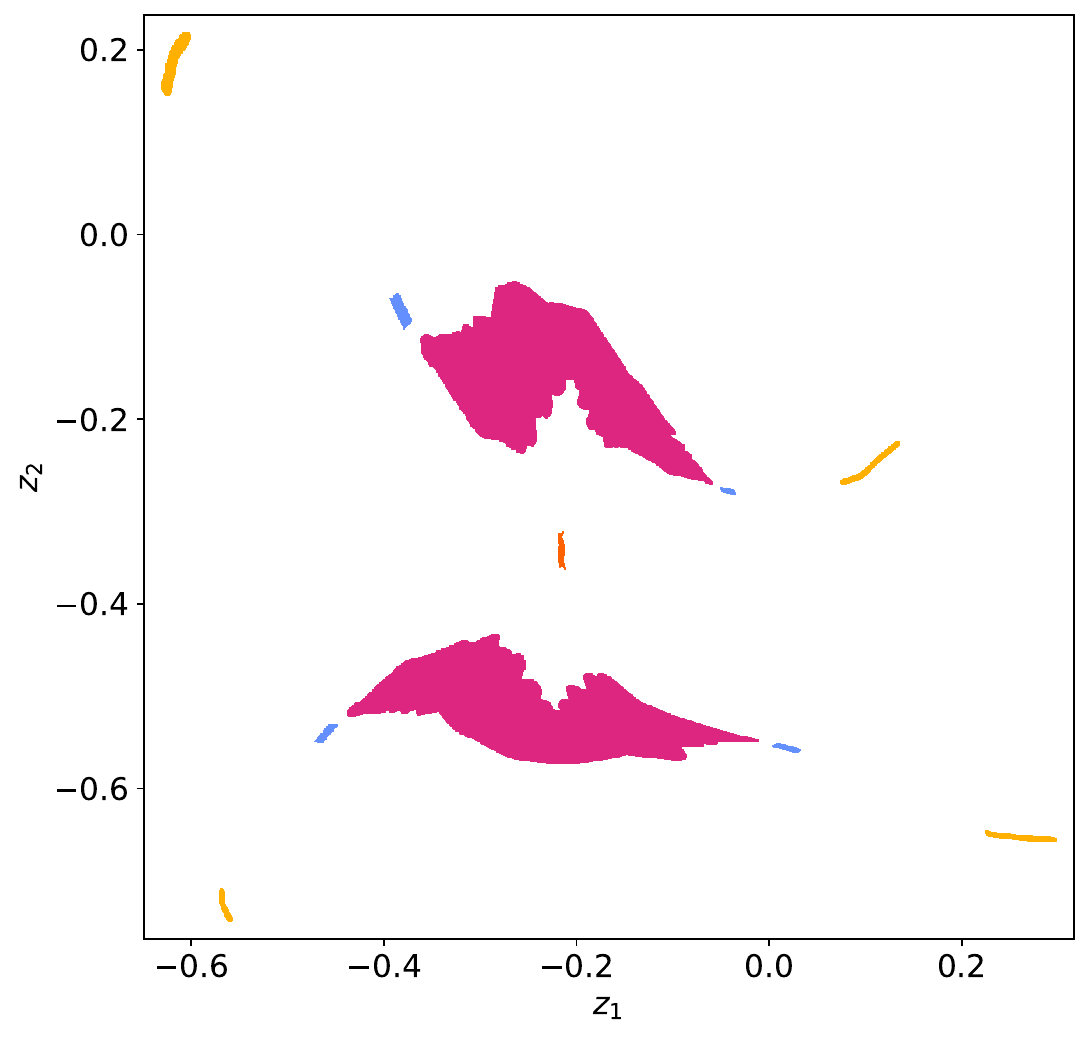}
    \caption{}
  \end{subfigure}
  \caption{
    (a) Hasse diagram of $\sMG(\cG_2)$ where $\cG_2$ is computed at a lower grid resolution than $\cG_1$ using $g_1$, the latent dynamics model of \eqref{eq:Leslie3D}. (b) Corresponding color-coded regions of latent phase space.
  }
  \label{fig:3D_Leslie_latent_coarse}
\end{figure}

To summarize, we have defined two different coarsenings of a computation performed with the latent dynamics model. The first changes the Morse graph obtained at a fine resolution, while the second recomputes the multivalued map on a coarser grid. Both approaches result in two minimal nodes, but in neither case are we able to verify the uniform residual hypothesis of Theorem~\ref{thm:main_simple}.

\subsection{Red coral population model}
\label{sec:red_coral}

In this section we consider a 13-dimensional model for the population of Mediterranean red coral that was proposed in \cite{santangelo:bramanti:iannelli} on the basis of demographic data. The population consists of thirteen age classes binned by year with the size of each class denoted by $x_i$. The system is given by
\begin{equation}
  \label{eq:coral_larval_survival}
  s^{(c_1, c_2, \alpha)}(u) = \frac{c_1}{u+c_2e^{-\alpha u}},
\end{equation}
where $u(x) = \frac{1}{\Omega}\sum_{i=2}^{13} x_i$ and the full model takes the form
\begin{equation}
  \label{eq:coral}
  f(x)_i =
  \begin{cases}
    s^{(c_1, c_2, \alpha)}(u(x)) \displaystyle\sum_{j=1}^{13} b_j x_j & i = 1\\
    s_{i-1} x_{i-1} & 2 \leq i \leq 13.
  \end{cases}
\end{equation}
We use the parameters derived in \cite{santangelo:bramanti:iannelli}, setting $\Omega = 36$, $c_1 = 2.94$, $c_2 = 520$, and $\alpha = 0.14$. The values $s_i$ for $1 \leq i \leq 12$ and $b_i$ for $1 \leq i \leq 13$ are provided in Table~\ref{tab:coral_data} in Section~\ref{sec:appendix}.

To the best of our knowledge the global dynamics of $f$ has not been rigorously determined, and the dimension of the system prohibits computation of the Morse graph in the full phase space using \texttt{CMGDB}. However, what is known \cite{santangelo:bramanti:iannelli,kamimoto:kim:sander:wanner} is that this system has two distinct attracting fixed points $a_0$, $a_1$ (one of them is the origin) and an unstable fixed point $r$ with connecting orbits to both $a_0$ and $a_1$. This is a lower bound on the complexity of the dynamics and can be encoded via a Morse graph with three nodes: two attractor nodes, each corresponding to one of the known stable fixed points,  and one unstable node corresponding to the unstable fixed point. Furthermore, this dynamics can be encoded in a one-dimensional system, which suggests learning a one-dimensional latent dynamical system.

For the computation shown below, we created the training sample from $20$ iterations of $500$ initial conditions obtained by Sobol sampling \cite{Bratley1988Algorithm, sobol} and obtained an encoder $E$ and latent dynamics $g\colon \R \to \R$. The initial conditions were sampled from $\prod_{i=1}^{13}[0,u_i]$, where
\[
  (u_{1},\ldots,u_{13})=(1300,1150,750,520,270,120,35,20,7,5,5,2,2).
\]
The resulting Morse graph $\sMG(\cG)$ is shown in Figure~\ref{fig:coral_latent_dynamics}(a). As desired, the Conley-Morse graph matches the known dynamics. There are two minimal nodes $0$ and $1$, each with the Conley index of a fixed point. Thus, we recover bistability and the nature of the attractors up to homology. Node $2$ has the Conley index of an unstable fixed point and the order relation suggests the existence of connecting orbits from $|\pi^{-1}_\cG(2)|$ to $|\pi^{-1}_\cG(0)|$ and $|\pi^{-1}_\cG(1)|$. In summary, our method provides us with a qualitative understanding of the dynamics of \eqref{eq:coral} that matches results from classical numerical studies. As shown in Table~\ref{tab:sampled_residual_tolerance}, the sampled residual violates \eqref{eq:loc_semiconjugacy} on both candidate attracting blocks, so this agreement is not certified by Theorem~\ref{thm:main_simple}.

\begin{figure}[!htpb]
  \centering
  \begin{subfigure}[b]{0.35\textwidth}
    \centering
    \includegraphics[width=\textwidth]{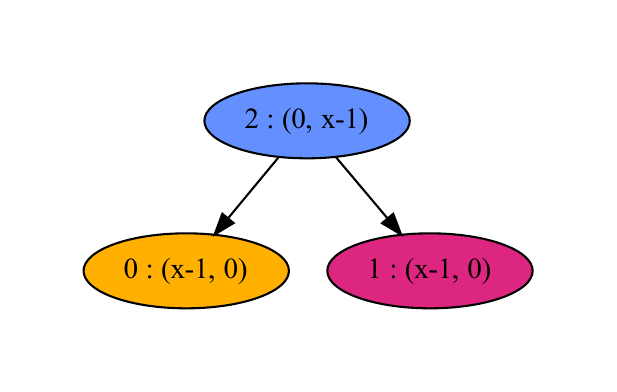}
    \caption{}
    \label{fig:coral_latent_dynamics_mg}
  \end{subfigure}
  \begin{subfigure}[b]{0.64\textwidth}
    \centering
    \includegraphics[width=\textwidth]{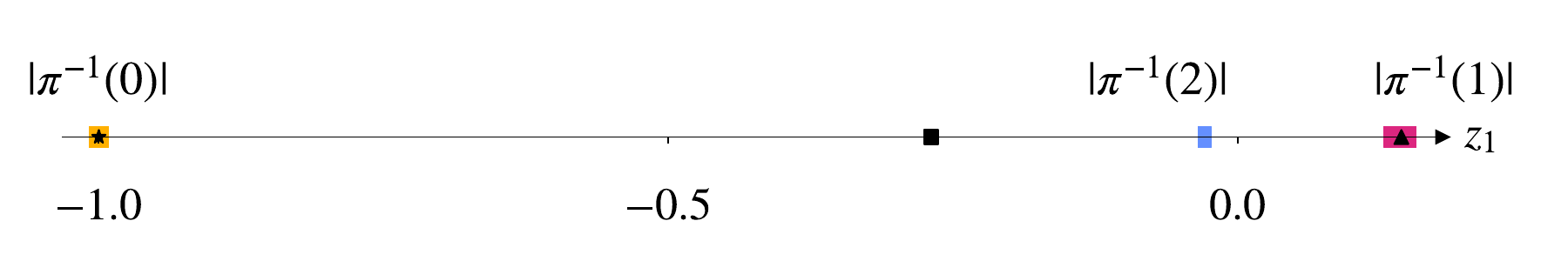}
    \caption{}
    \label{fig:coral_latent_dynamics_morse_sets}
  \end{subfigure}
  \caption{
  (a) Hasse diagram of $\sMG(\cG)$ for a one-dimensional latent dynamics model for \eqref{eq:coral}.  (b) Corresponding color-coded regions of phase space with images of the known fixed points: $E(a_0)$, $E(a_1)$, and $E(r)$, indicated by a star, a triangle, and a square, respectively.}
  \label{fig:coral_latent_dynamics}
\end{figure}

We can also ask about a quantitative agreement, i.e., does $E$ identify   attractors of $f$ with appropriate attractors of $g$? Figure~\ref{fig:coral_latent_dynamics}(b) shows the encoded images $E(a_0)$,  $E(a_1)$ and $E(r)$ as a star, triangle, and square, respectively. Observe that the stable fixed points are mapped to two distinct attracting blocks $|\pi_\cG^{-1}(0)|$ and $|\pi_\cG^{-1}(1)|$ corresponding to the two minimal nodes in the Morse graph. However, the image of the unstable fixed point $E(r)$ does not belong to the isolating neighborhood of $|\pi_\cG^{-1}(2)|$ of the unstable node, but rather is mapped to an attracting block for node $0$.

The fact that $E(a_0)\in |\pi_\cG^{-1}(0)|$, $E(a_1)\in |\pi_\cG^{-1}(1)|$, but $E(r)\notin|\pi_\cG^{-1}(2)|$ is not completely surprising. Our training set consists of $10^4$ state pairs from a 13-dimensional state space, and given the nature of the dynamics, the density of the data is much greater around the stable fixed points. This provides a plausible explanation for the quantitative agreement with respect to $a_0$ and $a_1$, but not $r$.
We return to this topic in Section~\ref{sec:conclusions}.

\subsection{Chafee--Infante}
\label{sec:chafee_infante}

The Chafee--Infante equation \cite{chafee:infante}, which is a specific form of the Allen-Cahn equation \cite{allen:cahn}, is
\begin{equation}
  \label{eq:chafee_infante}
  \begin{cases}
    u_t = u_{xx} + \lambda (u - u^3), \quad x \in [0, \pi], t \geq 0 \\
    u(0, t) = u(\pi, t) = 0, \quad t \geq 0 \\
    u(x, 0) = u_0(x), \quad x \in [0, \pi].
  \end{cases}
\end{equation}
This is a variational equation with a global compact attractor. Thus, solutions on the global attractor consist of equilibria, which are solutions of the following boundary value problem
\begin{equation}
  \label{eq:ci_steady_state}
  \begin{cases}
    u_{xx} + \lambda (u - u^3) = 0, \\
    u(0) = u(\pi) = 0,
  \end{cases}
\end{equation}
or connecting orbits between the equilibria. The structure of the global attractor is well understood for all parameter values $\lambda$. In particular, it is known that the complete bifurcation diagram for the equilibria for  $\lambda \in (0,45]$ is as shown in Figure~\ref{fig:ci_bif_diagram} \cite{henry}.

\begin{figure}[!htpb]
  \centering
  \includegraphics[width=0.7\textwidth]{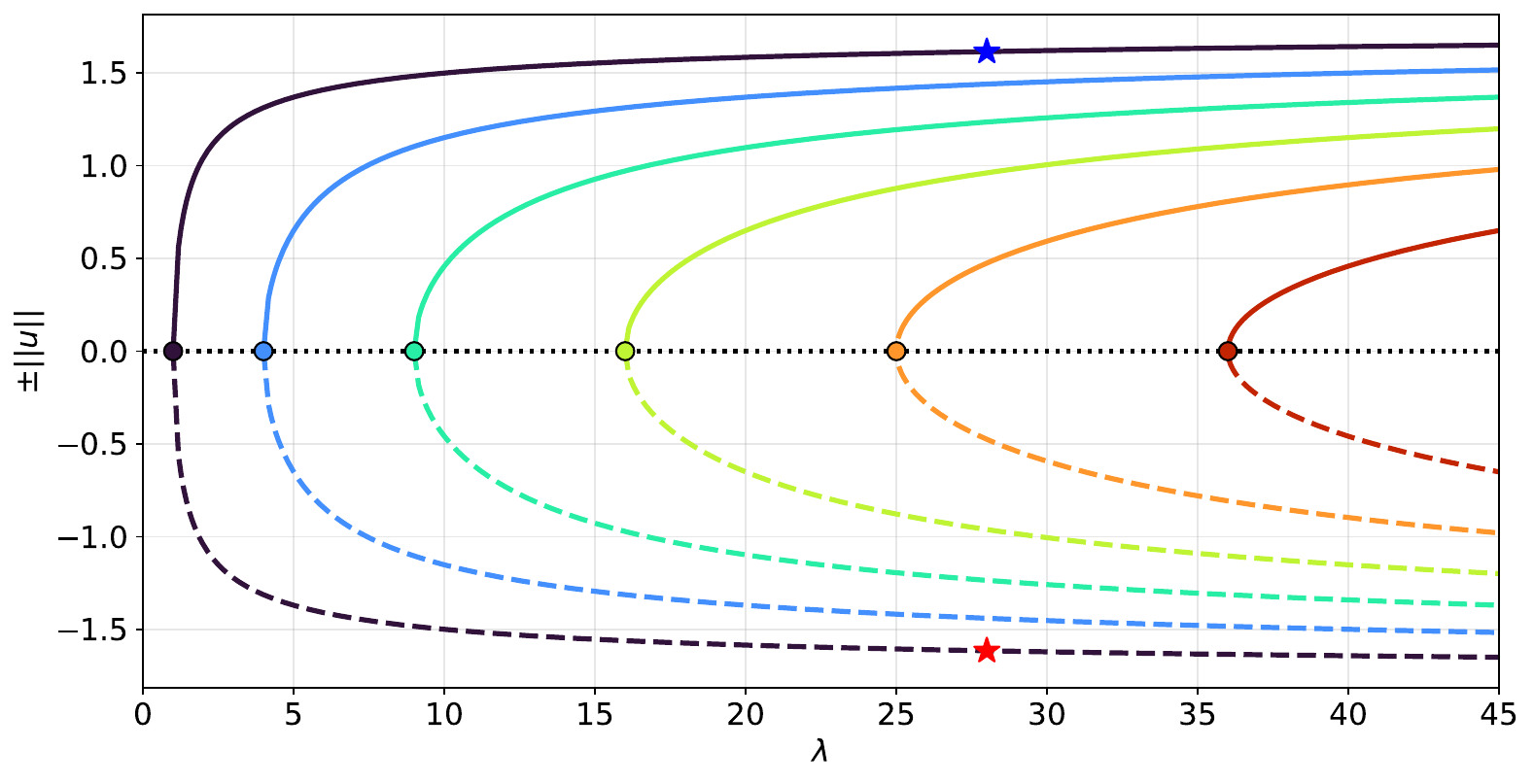}
  \caption{Bifurcation diagram of steady state solutions of \eqref{eq:chafee_infante}. At $\lambda = 28$ there are nine unstable solutions and two stable ones (indicated by the stars in the figure).}
  \label{fig:ci_bif_diagram}
\end{figure}

For this paper we fix the parameter value to be $\lambda = 28$, and as shown in Figure~\ref{fig:ci_bif_diagram} there are $11$ hyperbolic equilibria. The trivial solution $u = 0$, that we denote by $M(5)$ has an unstable manifold of dimension $5$. For $k=0,\ldots, 4$, there are pairs of equilibria with unstable manifold of dimension $k$ that we denote by $M(k^+)$ and $M(k^-)$ depending on the sign of $u'(0)$. Observe that $M(0^\pm)$ are two stable equilibria (indicated by the stars in Figure~\ref{fig:ci_bif_diagram}). The Morse representation with these equilibria as the Morse sets takes the form shown in Figure~\ref{fig:ci_MR} \cite{mischaikow:95}. The Conley indices of the Morse sets are
\[
  \Con_n(M(5)) =
  \begin{cases}
    x-1 & \text{if $n=5$,} \\
    0 & \text{otherwise.}
  \end{cases}
  \quad\text{and}\quad
  \Con_n(M(k^\pm)) =
  \begin{cases}
    x-1 & \text{if $n=k$,} \\
    0 & \text{otherwise,}
  \end{cases}
\]
and the dimension of the global attractor is $5$. This suggests learning a latent model $g\colon \R^5\to \R^5$. However, applying \texttt{CMGDB} to a five-dimensional map is computationally expensive, and thus we focus on understanding the most compelling dynamics of \eqref{eq:chafee_infante}.

\begin{figure}[!htpb]
  \centering
  \begin{subfigure}[b]{0.33\textwidth}
    \centering
    \includegraphics[width=0.72\textwidth]{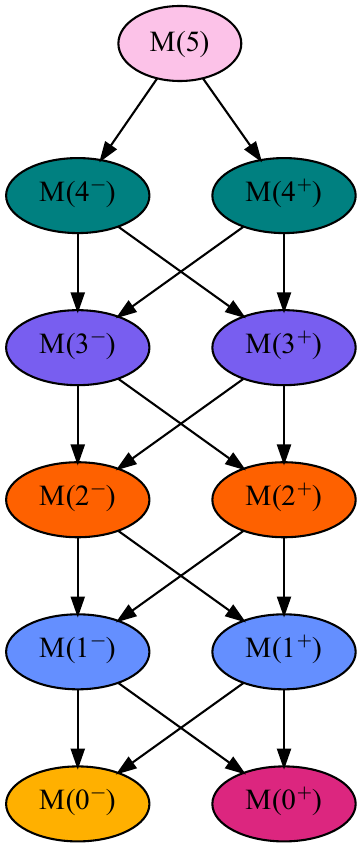}
    \caption{}
    \label{fig:ci_MR}
  \end{subfigure}
  \begin{subfigure}[b]{0.25\textwidth}
    \centering
    \includegraphics[width=0.95\textwidth]{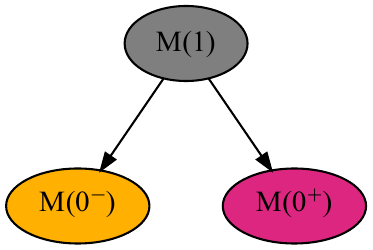}
    \caption{}
    \label{fig:ci_bistable}
  \end{subfigure}
  \caption{(a) Hasse diagrams of a Morse representation of \eqref{eq:chafee_infante} at $\lambda = 28$, where the number indicates the dimension of the unstable manifold of the equilibrium. (b) The Hasse diagram of a coarsened Morse representation of \eqref{eq:chafee_infante} which only captures bistability and is the image of (a) under a poset epimorphism.}
  \label{fig:ci_MRfull}
\end{figure}

The Allen-Cahn equation was introduced as a model for the phase separation of iron alloys and thus a question of primary concern is the following: to which stable pattern $M(0^+)$ or $M(0^-)$ will a given initial condition  converge? Using the language of Morse representations we can address this question as follows. Let $M(1)$ denote the invariant set consisting of the equilibria $\setdef{M(k^\pm)}{k=1,\ldots,4}\cup\setof{M(5)}$ and the connecting orbits between them, and consider the Morse representation $\setof{M(0^-),M(0^+),M(1)}$ shown in Figure~\ref{fig:ci_bistable}.

A one-dimensional ordinary differential equation with the Morse representation in Figure~\ref{fig:ci_bistable} is
\begin{equation}
  \label{eq:bistableODE}
  \frac{dx}{dt} =  x(1-x^2)
\end{equation}
for which the origin $0$ is an unstable fixed point and $\pm1$ are stable fixed points. This suggests (as was done in Section~\ref{sec:red_coral}) learning a latent model $g_1\colon \R\to \R$. However, since the global attractor of \eqref{eq:chafee_infante} at $\lambda = 28$ is five-dimensional, a reasonable concern is that $E$ and $g$ will fail to produce quantitatively useful information. With this in mind we also seek latent maps $g_2\colon \R^2\to \R^2$ and $g_3\colon \R^3\to \R^3$.

Using a spectral method in dimension $N = 64$ to solve \eqref{eq:chafee_infante} we consider the time-$\tau$ map with $\tau = 0.1$. We generated $1,000$ trajectories with $30$ steps each to construct a dataset of $30,000$ state pairs. Using this dataset we train autoencoders $E_d$ and latent maps $g_d$, $d=1,2,3$, and apply \texttt{CMGDB} to $g_d$ to obtain multivalued maps $\cG_d$ using adaptive grids. The resulting Morse graphs $\sMG(\cG_d)$ are shown in Figures~\ref{fig:ci_latent_1d}(a), \ref{fig:ci_morse_graph_dynamics}(a),  and \ref{fig:ci_latent_3d}(a), respectively. Each of these Morse graphs has two minimal nodes. As indicated by Table~\ref{tab:sampled_residual_tolerance} our sampling technique does not indicate that the local residual bound \eqref{eq:loc_semiconjugacy} fails when $d=2,3$, suggesting that Theorem~\ref{thm:main_simple} is applicable and hence that \eqref{eq:chafee_infante} exhibits bistability. We can obtain a coarsened Morse graph from $\cG_d$ analogous to Figure~\ref{fig:ci_bistable}. The resulting coarsened Morse graph and the corresponding regions of phase space are shown for $d=2$ in Figures~\ref{fig:ci_2d_mg_coarsening} and \ref{fig:ci_2d_ms_coarsening}.

\begin{figure}[!htpb]
  \centering
  \begin{subfigure}[b]{0.43\textwidth}
    \centering
    \includegraphics[width=0.9\textwidth]{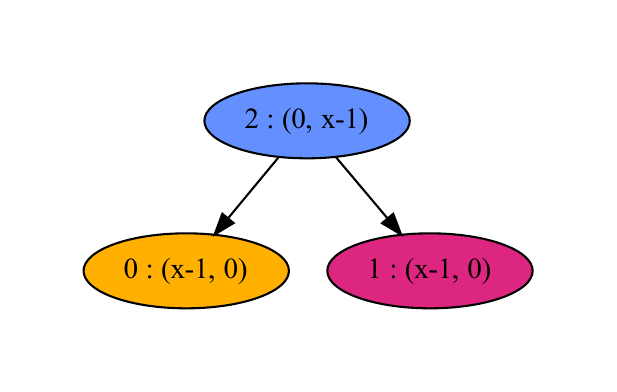}
    \caption{}
  \end{subfigure}
  \hfill
  \begin{subfigure}[b]{0.53\textwidth}
    \centering
    \includegraphics[width=\textwidth]{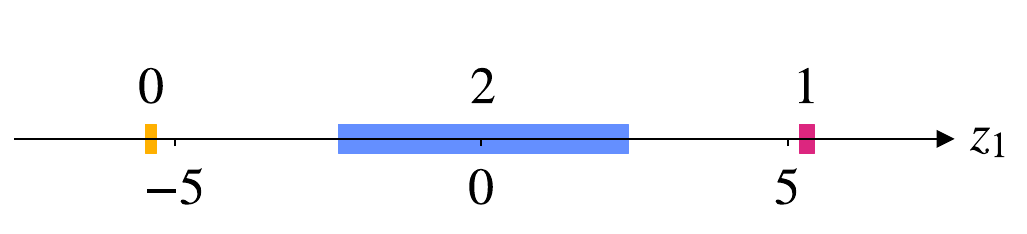}
    \caption{}
  \end{subfigure}
  \caption{(a) Hasse diagram of $\sMG(\cG_1)$. (b) Corresponding color-coded regions of phase space for the 1-dimensional latent model $g_1$ for the time-$\tau$ map of \eqref{eq:chafee_infante}.}
  \label{fig:ci_latent_1d}
\end{figure}

\begin{figure}[!htpb]
  \centering
  \begin{subfigure}[b]{0.46\textwidth}
    \centering
    \includegraphics[width=0.82\textwidth]{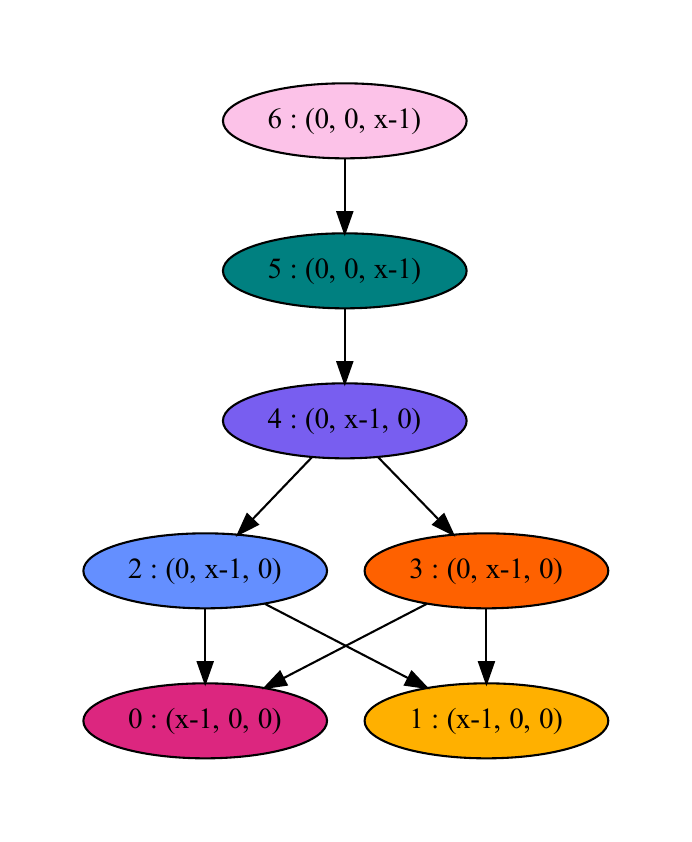}
    \caption{}
    \label{fig:ci_2d_mg}
  \end{subfigure}
  \hfill
  \begin{subfigure}[b]{0.46\textwidth}
    \centering
    \includegraphics[width=\textwidth]{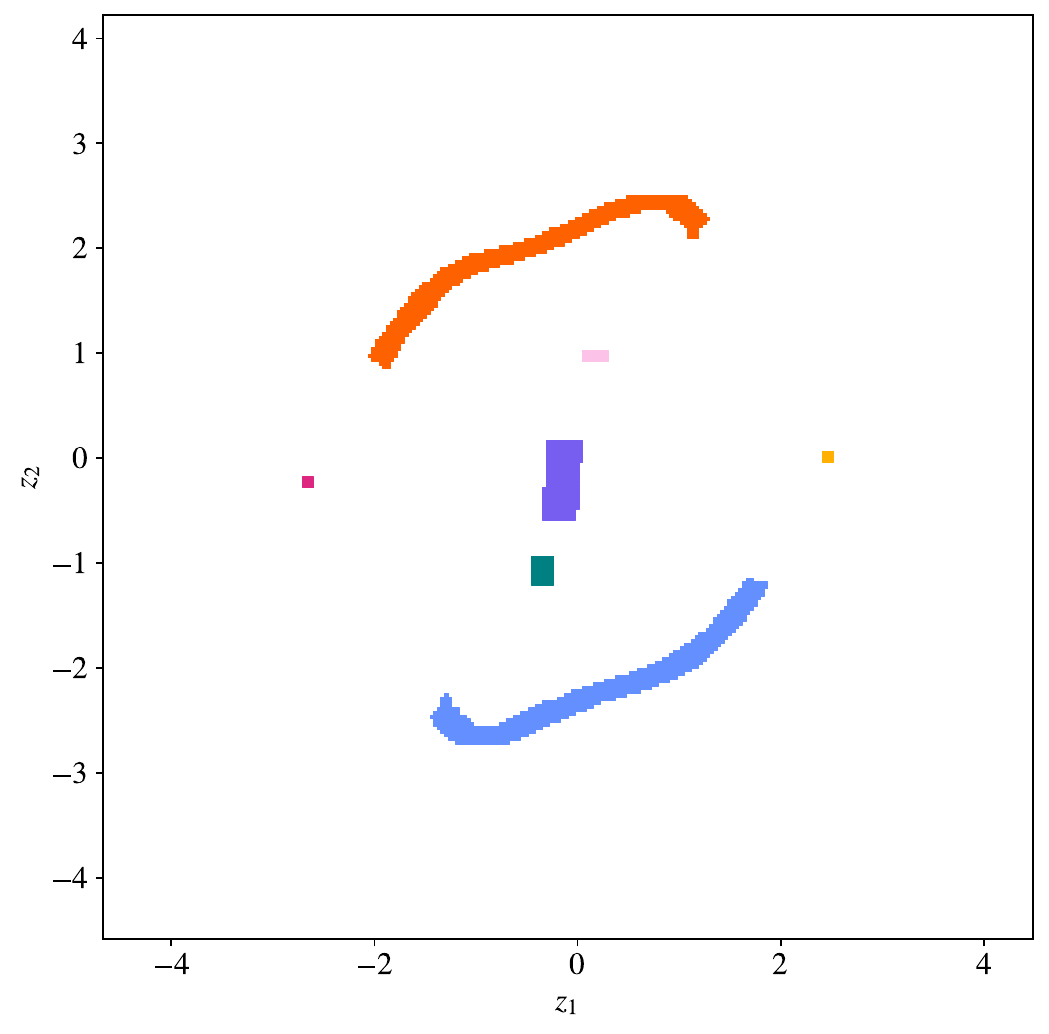}
    \caption{}
    \label{fig:ci_2d_ms}
  \end{subfigure}

  \begin{subfigure}[b]{0.46\textwidth}
    \centering
    \includegraphics[width=0.8\textwidth]{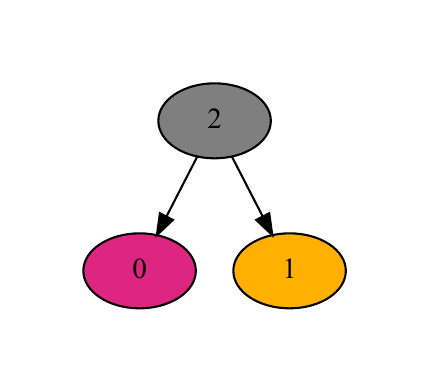}
    \caption{}
    \label{fig:ci_2d_mg_coarsening}
  \end{subfigure}
  \hfill
  \begin{subfigure}[b]{0.46\textwidth}
    \centering
    \includegraphics[width=\textwidth]{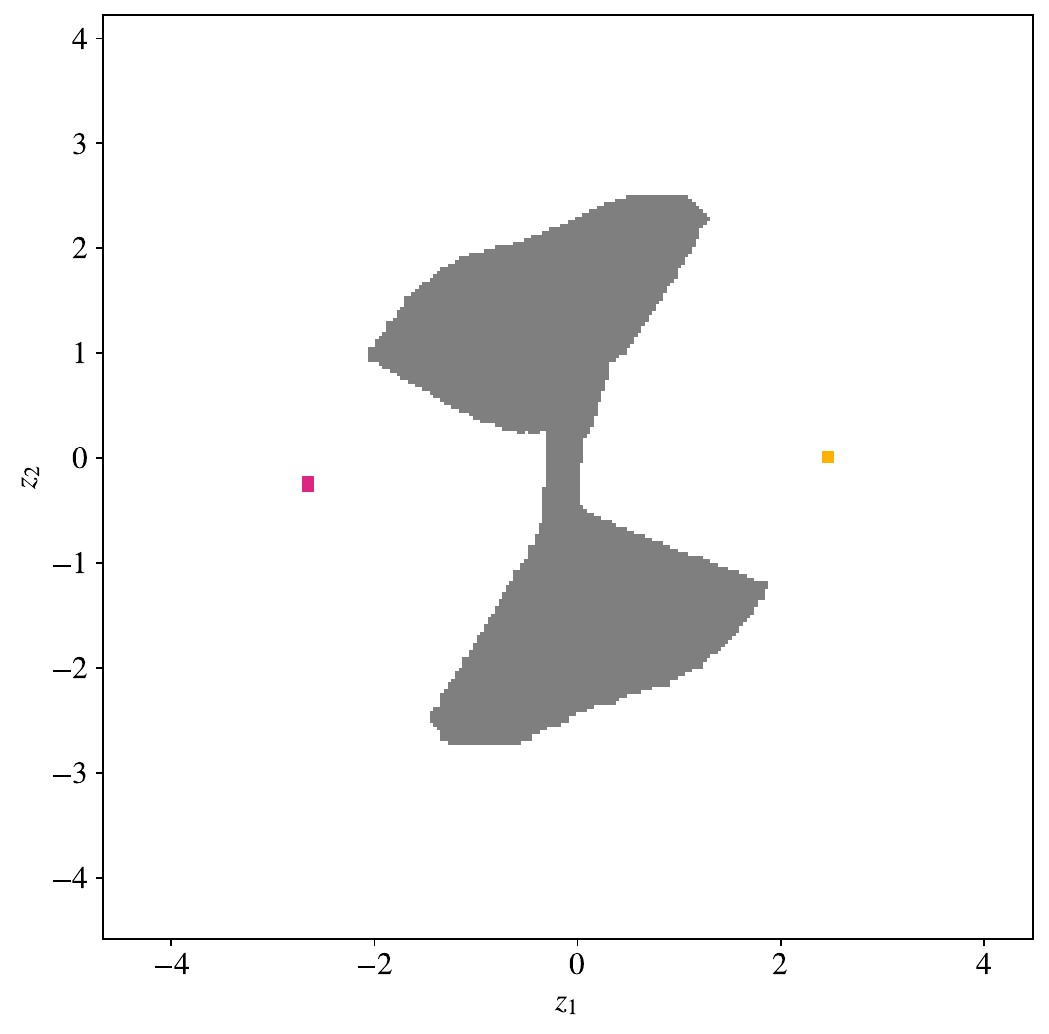}
    \caption{}
    \label{fig:ci_2d_ms_coarsening}
  \end{subfigure}
  \caption{(a) Hasse diagram of $\sMG(\cG_2)$. (b) Corresponding color-coded regions of phase space for the latent model $g_2$ for the time-$\tau$ map of \eqref{eq:chafee_infante}. (c) Hasse diagram of coarsening of $\sMG(\cG_2)$. The coarse Morse graph only captures bistability and is the image of (a) under a poset epimorphism. (d) The corresponding color-coded regions of phase space for the coarse Morse graph.}
  \label{fig:ci_morse_graph_dynamics}
\end{figure}

\begin{figure}[!htpb]
  \centering
  \begin{subfigure}[b]{0.35\textwidth}
    \centering
    \includegraphics[width=\textwidth]{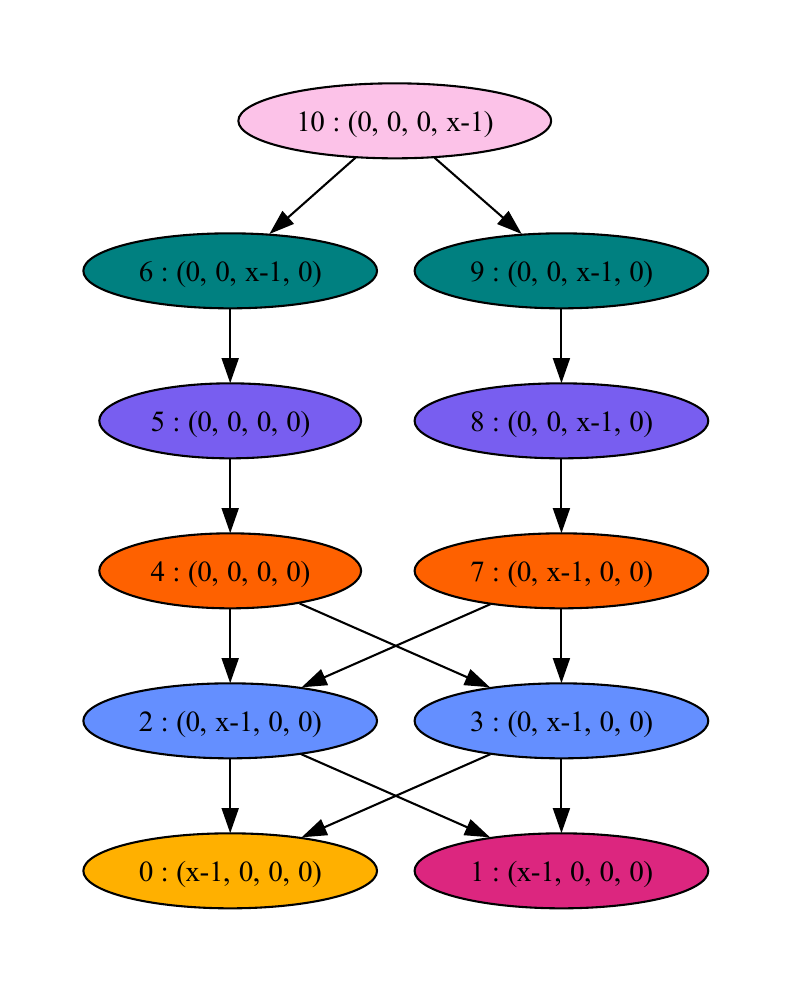}
    \caption{}
  \end{subfigure}
  \hfill
  \begin{subfigure}[b]{0.61\textwidth}
    \centering
    \includegraphics[width=\textwidth]{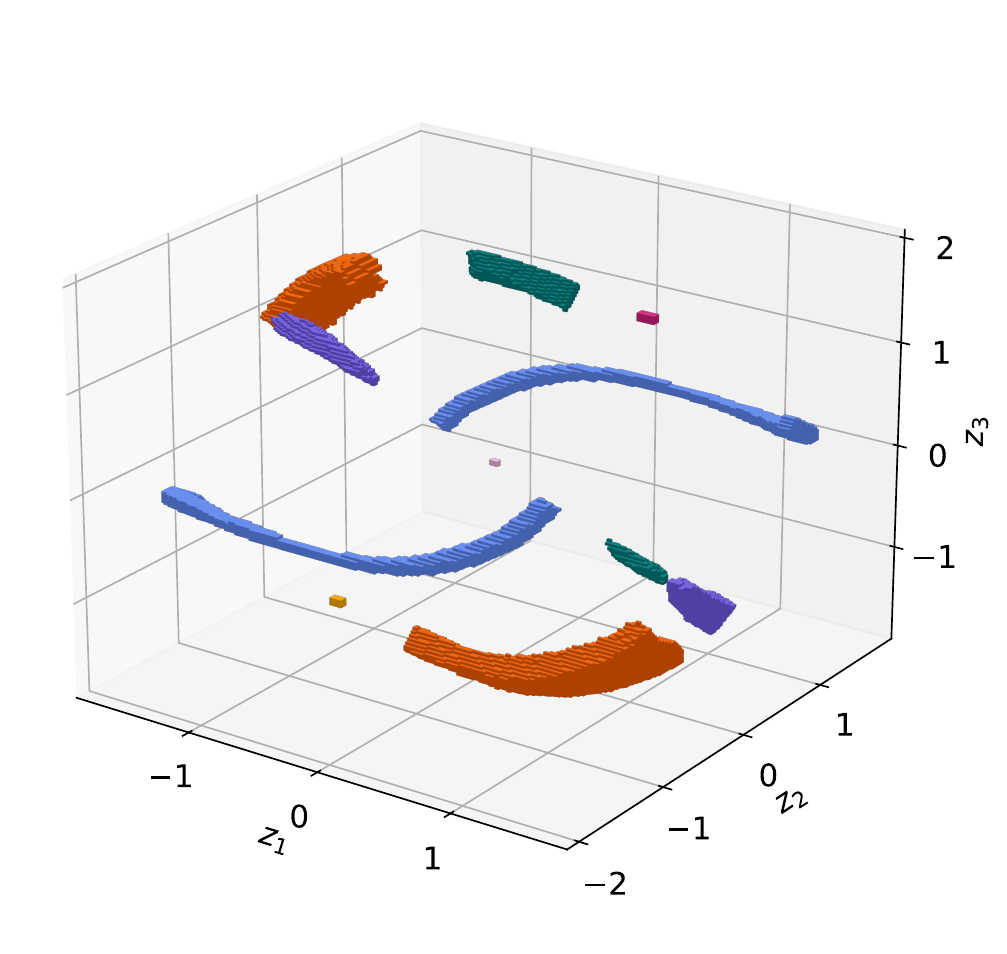}
    \caption{}
  \end{subfigure}
  \caption{(a) Hasse diagram of $\sMG(\cG_3)$. (b) Corresponding color-coded regions of phase space for the latent model $g_3$ for the time-$\tau$ map of \eqref{eq:chafee_infante}.}
  \label{fig:ci_latent_3d}
\end{figure}

To investigate the extent to which the approach can be used to classify the asymptotic behavior of a given initial condition, we generated a test dataset with $10^4$ initial conditions and ground truth labels determined by integrating \eqref{eq:chafee_infante} until convergence. The following procedure was repeated for $d=1,2,3$ and fifteen trials consisting of three training runs per dataset over five training datasets. For each trial, we obtained a distinct $E_d$ and $g_d$.

Using $g_d$, we computed a multivalued map $\tilde{\cG}_d$ on a uniform grid (it is not computationally feasible to define the regions of attraction on the adaptive grid). We then computed $\sMG(\tilde{\cG}_d)$ and the maximal region of attraction under $\tilde{\cG}_d$ of each minimal node of the Morse graph $\sMG(\tilde{\cG}_d)$. Let $R(M(0^\pm))$ denote the basin of attraction under $\tilde{\cG}_d$ of $\cM \in \sMG(\tilde{\cG}_d)$ such that  $E(M(0^\pm)) \in |\pi_{\cG}^{-1}(\cM)|$.

For example, see Figure~\ref{fig:ci_attractor_basins}, which shows the regions of attraction for $\tilde{\cG}_d$. For each initial condition $u$, we determined whether $E_d(u)$ is in $R(M(0^+))$ or $R(M(0^-))$. In these cases, $u$ is classified via $\tilde{\cG}_d$ to converge to the solution $M(0^+)$ or $M(0^-)$ respectively. If $E_d(u)$ is in the complement of $R(M(0^+)) \cup R(M(0^-))$, then we say that the final state is \emph{undetermined}.

In Table~\ref{tab:basins_attraction}, we report the average outcomes averaged over the fifteen trials. The average percentages of initial conditions which are misclassified as converging to the wrong solution are very low, equaling approximately $0.3\%$ at $d=1$ and dropping by an order of magnitude at $d=3$. If we interpret the undetermined initial conditions as not being misclassified, then the percentage of initial conditions correctly classified (as positive, negative, or undetermined) is close to $100\%$. On the other hand, we find that the method is conservative, with an average of $56.6\%$, $42.75\%$, and $39.82\%$ of initial condition being classified as undetermined for $d=1, 2$ and $3$ respectively. However, the percentage of initial condition classified as undetermined decreases as the latent dimension is increased.

We attribute the conservatism of the approach to the discrepancy between the length of trajectories in the training dataset versus the typical number of steps until convergence. More specifically, for approximately $21\%$ of the initial conditions in the test set, more than $60$ steps are required for convergence, whereas we train on trajectories of $30$ steps. In Section~\ref{sec:appendix_ci_roa_statistics} we provide additional summary statistics and the results for the individual computations.

\begin{figure}[!htpb]
  \centering
  \begin{subfigure}[b]{0.47\textwidth}
    \centering
    \includegraphics[width=\textwidth]{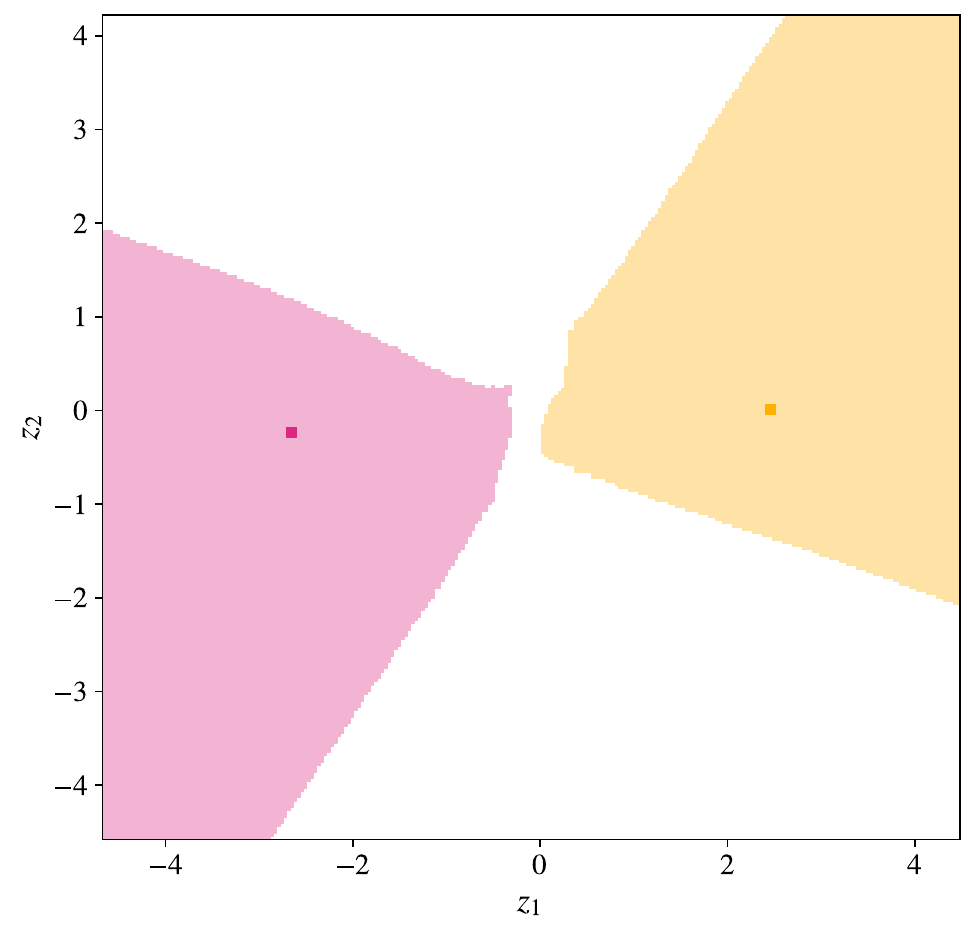}
    \caption{}
  \end{subfigure}
  \hfill
  \begin{subfigure}[b]{0.47\textwidth}
    \centering
    \includegraphics[width=\textwidth]{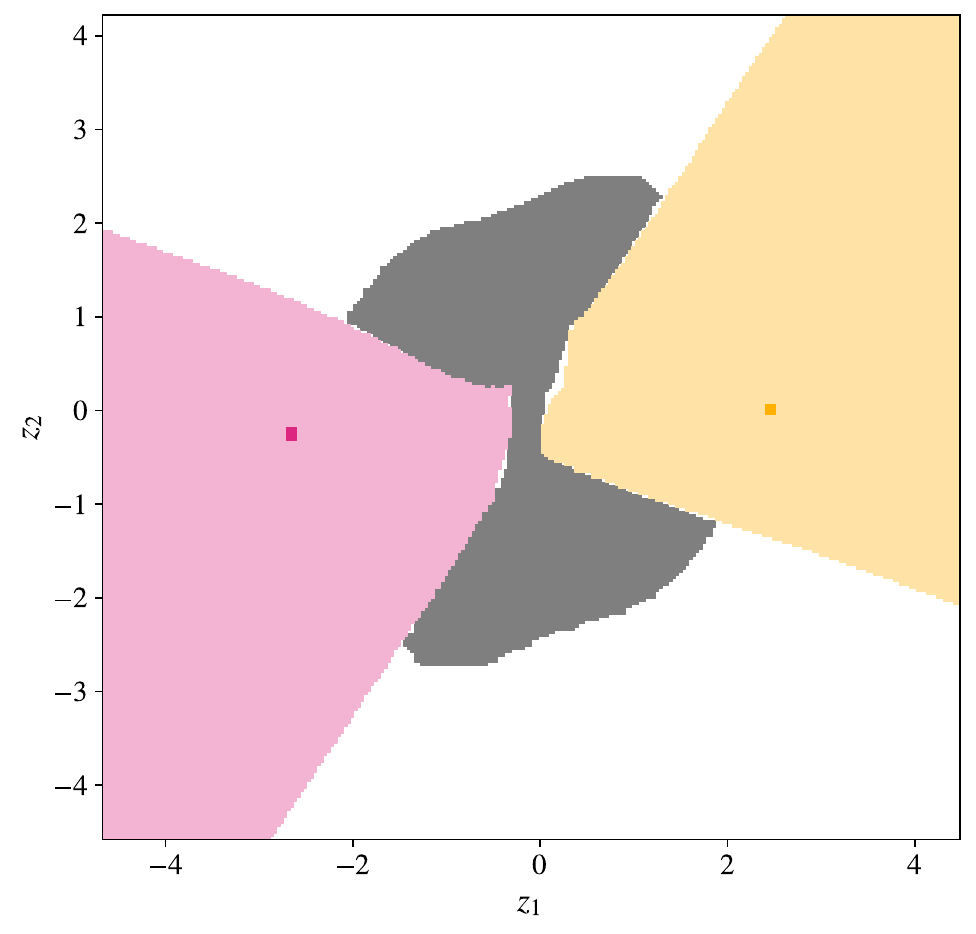}
    \caption{}
  \end{subfigure}
  \caption{(a) The
    sets $|\pi_{\cG_2}^{-1}(0)|$ (dark magenta square) and $|\pi_{\cG_2}^{-1}(1)|$ (orange square), and their corresponding basins of attraction, shown in pink and yellow respectively.
  (b) Overlay of the regions of attraction and the color-coded regions of phase space for the coarse Morse graph.}
  \label{fig:ci_attractor_basins}
\end{figure}

\begin{table}[!htpb]
  \centering
  \renewcommand{\arraystretch}{1.2}
  \begin{tabular}{@{}lccc@{}}
    \toprule
    Latent Dimension               & $d=1$ & $d=2$ & $d=3$ \\ \midrule
    Correctly classified as $M(0^-)$ & $20.65\%$ & $26.92\%$ & $\mathbf{30.68\%}$ \\
    Correctly classified as $M(0^+)$ & $22.17\%$ & $\mathbf{30.00\%}$ & $29.43\%$ \\
    Undetermined                     & $56.60\%$ & $42.75\%$ & $\mathbf{39.82}\%$ \\ \midrule
    Misclassified as $M(0^-)$        & $0.282\%$ & $0.104\%$ & $\mathbf{0.023}\%$ \\
    Misclassified as $M(0^+)$        & $0.303\%$ & $0.220\%$ & $\mathbf{0.048}\%$ \\
    \bottomrule
  \end{tabular}
  \caption{Mean percentages of initial conditions that are undetermined, and misclassified or correctly classified as converging to the steady state solutions $M(0^-)$ and $M(0^+)$ of \eqref{eq:ci_steady_state}, averaged over 15 computations for each latent dimension.}
  \label{tab:basins_attraction}
\end{table}

\section{Discussion and open questions}
\label{sec:conclusions}

The motivation for this work comes from two sources. First, the work of \cite{gameiro:gelb:mischaikow} proves that any characterization of dynamics using the language of Morse representations and the Conley index can be recovered given sufficient data and sufficient computational effort. A key step in this process is the ability to identify attracting blocks. Second, Theorem~\ref{thm:main_simple} provides an explicit quantitative condition \eqref{eq:loc_semiconjugacy} under which attracting blocks for a latent dynamical system can be lifted to the dynamics of interest. The results presented in Section~\ref{sec:applications} repeatedly show that information from the latent dynamics can be lifted even if \eqref{eq:loc_semiconjugacy} is not satisfied. We provide a potential explanation.

As indicated in Proposition~\ref{prop:MR=Att}, Morse representations of $f$ are equivalent to finite lattices of attractors, which in turn can be identified via finite lattices of attracting neighborhoods. Our computational approach seeks to identify elements of $\sABlock(f)$ by lifting elements of $\sABlock(g)$ that can be captured by $\cG$. However, as the following example shows, attracting blocks are extremely special forms of attracting neighborhoods.

\begin{ex}
  \label{ex:radial}
  Consider the following differential equation given in polar coordinates:
  \begin{equation*}
    \frac{dr}{dt} = -\epsilon r, \qquad
    \frac{d\theta}{dt} = 1.
  \end{equation*}
  The global attractor is the origin. Assume $\epsilon>0$ is small and the sampling rate is high. It is impossible, using a reasonable cubical complex, to find an attracting block $\cN$ of the origin contained in a ball of moderate radius centered at the origin. However, any neighborhood of the origin is an attracting neighborhood for the origin.
\end{ex}

In light of Example~\ref{ex:radial}, a possible explanation for our success is that even though \eqref{eq:loc_semiconjugacy} fails and thus we cannot guarantee that $E^{-1}(N)$ is an attracting block, it is possible that $E^{-1}(N)$ is an attracting neighborhood. This raises the question of whether we can improve our methods to obtain more plausible guarantees. We discuss three options, which involve altering the combinatorial model, the learned model, or the data respectively. It is worth noting that by definition we can determine whether a given set $N$ is an attracting block via a single application of the map, i.e., by checking whether $f(N)\subset \Int(N)$. Determining whether a set is an attracting neighborhood is, in general, much more difficult, though machine learning techniques for identifying attracting neighborhoods have been proposed \cite{gameiro:gelb:kalies:kramar:mischaikow:tatasciore}. Thus, we restrict our attention to algorithms that produce attracting blocks.

The first option is to consider a wider range of candidate attracting blocks while keeping the learned encoder $E$ and latent map $g$ fixed. Note that in each example we used a candidate $|\cN|$ for $\cN \in \sAtt(\cG)$.  However, typically $\sAtt(\cG)$ is a small subset of $\sInvset^+(\cG)$, and for the three-dimensional Leslie system discussed in Section~\ref{sec:3d_leslie}, the tolerances of the associated attracting blocks are within a few box widths (see Section~\ref{sec:appendix_sampled_residual_tolerance}). At fine resolutions needed to capture complicated dynamics, it can therefore be difficult to satisfy \eqref{eq:loc_semiconjugacy} for $\cN \in \sAtt(\cG)$. Thus, a coarser approach could yield more accurate results. One option, at a fixed resolution, is to search among the sets $\cN' \in \sInvset^+(\cG)$ satisfying $\omega(\cN',\cG)=\cN$ for a given $\cN \in \sAtt(\cG)$ and empirically test \eqref{eq:loc_semiconjugacy} for $|\cN'|$. The example discussed in Section~\ref{sec:3d_leslie} suggests that this coarsening can also be carried out at the grid resolution, by recomputing $\cG$ at a lower resolution, or by providing a coarser Morse graph which is the image of $\sMG(\cG)$ under an epimorphism.

The second option is to modify the learned maps rather than the combinatorial model. In a staged approach, after $\cN$ is initially determined, \eqref{eq:loc_semiconjugacy} may be locally optimized over $|\cN|$ yielding an encoder $E'$ and latent dynamics model $g'$ that are specialized for performance on the region of interest.

The third option is to modify the data through adaptive experimental design, by which we mean an iterative process of collecting data and computing Conley-Morse graphs. This requires developing a theoretically justified method for optimal sampling given Conley-Morse graph information.
In other words, given the results from an initial set of models, how should one sample additional data to produce an improved set of models? We leave these options to future work.

For all of the applications in Section~\ref{sec:applications}, we provide the Conley-Morse graph of the latent dynamics.
In many cases, the Conley index information derived from $g$ was lifted to $f$.
Future work should develop a verifiable theoretical justification for when this will occur.

\section{Data and Code Availability}

The data and code used for the applications are available at \url{https://github.com/begelb/latent_dynamics/tree/paper}.

\section*{Acknowledgements}
B.G. was supported by the National Science Foundation under Grant DGE-1842213.
B.R. was supported by the Air Force Office of Scientific Research under award number FA9550-23-1-0400.
W.K. was supported by the Air Force Office of Scientific Research under award numbers FA9550-23-1-0400 and FA9550-23-1-0011.
M.G. and K.M. were partially supported by the Air Force Office of Scientific Research under award numbers FA9550-23-1-0400 and FA9550-23-1-0011.

The authors acknowledge the Office of Advanced Research Computing (OARC) at Rutgers, The State University of New Jersey for providing access to the Amarel cluster and associated research computing resources that have contributed to the results reported here. Anthropic Claude and OpenAI Codex were used to assist with code development and testing, numerical analysis and figure preparation. The numerical results and figures were produced by the documented computational pipeline.  The authors assume responsibility for all content.

\appendix
\section{Computational details}
\label{sec:appendix}

\subsection{Neural network architecture, data, and training parameters}
\label{sec:appendix_data_training}
We use a standard autoencoder framework to construct approximate semiconjugacies. The maps $E$, $D$, and $g$ are instantiated as multilayer perceptrons. In Table~\ref{tab:architecture} we report the network architecture parameters for each example.
\begin{table}[H]
  \tiny
  \centering
  \setlength{\tabcolsep}{2pt}
  \begin{tabular}{lcccc}
    \toprule
    & Extended Leslie (10D) & Three-dimensional Leslie & Chafee--Infante & Red coral \\
    \midrule
    Ambient dimension                  & 10 & 3 & 64 & 13 \\
    Latent dimension                   & 2 & 2 & $1,2,3$ & 1 \\
    Hidden layers per network          & 4 & 3 & 2 & 3 \\
    Hidden layer widths                & 64 & 32 & 32 or 64 & 64 \\
    Hidden activation                  & ReLU & ReLU & Tanh & ReLU \\
    Encoder/dynamics output activation & Tanh & Tanh & None & Tanh \\
    Decoder output activation          & Sigmoid & Sigmoid & None & Sigmoid \\
    \bottomrule
  \end{tabular}
  \caption{Network architecture parameters. For each Chafee--Infante example, the encoder, latent dynamics model, and decoder have hidden-layer widths $(64,32)$, $(32,32)$, and $(32,64)$, respectively.}
  \label{tab:architecture}
\end{table}

The training is based on a finite sample
\[
  \cD := \setof{(x_i, f(x_i))}_{i=1}^N, \quad x_i \in X,\ 1 \leq i \leq N,
\]
which is built from $T$ iterations of $m$ samples of initial conditions. In some cases, we iterate for $T$ steps and discard the first $T_0$ iterations, thus $N=m(T-T_0)$. The values for each dataset are given in Table~\ref{tab:data}.

\begin{table}[H]
  \tiny
  \centering
  \setlength{\tabcolsep}{2pt}
  \begin{tabular}{lcccc}
    \toprule
    & Extended Leslie (10D) & Three-dimensional Leslie & Chafee--Infante & Red coral \\
    \midrule
    $m$ (training)   & 8000 & 3200 & 1000 & 500 \\
    $m$ (validation) & 2000 & 800  & ---  & 10000 \\
    $N=m(T-T_0)$ (training pairs)       & 160000 & 64000 & 30000 & 10000 \\
    $T_0$ (transient steps discarded) & 0 & 10 & 0 & 0 \\
    $T$ (total trajectory length)  & 20 & 30 & 30 & 20 \\
    Sampling procedure       & Unif($X$) & Unif($X$) & Unif($X$) & Sobol$'$ \\
    Scaling procedure        & MinMax & MinMax & None & MinMax \\
    \bottomrule
  \end{tabular}
  \caption{Dataset parameters. In the scaling-procedure row, MinMax refers to \texttt{sklearn.preprocessing.MinMaxScaler} with feature range $(0,1)$ \cite{scikit-learn}. For Chafee--Infante, the initial spectral coefficients satisfy $a_{0,k}\sim\operatorname{Unif}(-2,2)e^{-0.5(k-1)}$ for $k=1,\ldots,64$. Sampling according to a Sobol$'$ sequence was performed with \texttt{scipy.stats.qmc.Sobol} with \texttt{scramble=True} \cite{roy:owen:balandat:haberland, virtanen:etal}.}
  \label{tab:data}
\end{table}
To account for varying feature scales of the data and activation function ranges, the data is typically rescaled coordinate-wise to $[0,1]$ before training. We train the neural networks by minimizing the parameters of $E$, $D$, and $g$ with respect to
\[
  \frac{1}{N}\sum_{i=1}^N\Bigl(w_1L_1(x_i)+w_2L_2(x_i,f(x_i))+w_3L_3(x_i,f(x_i))\Bigr),
\]
where $w_i$ are nonnegative hyperparameters, and the functions $L_i$ are defined as
\begin{align*}
  L_1(x) &= \lVert D(E(x))-x\rVert_2^2, \\
  L_2(x,y) &= \lVert D(g(E(x)))-y\rVert_2^2,\\
  L_3(x,y) &= \lVert g(E(x))-E(y)\rVert_2^2.
\end{align*}
The minimization is carried out via the Adam optimizer~\cite{kingma:ba}, with reduction of the initial learning rate on plateau. The function $L_1$ is the standard reconstruction error. The value $L_2(x,f(x))$ measures one-step prediction error through the decoder and latent map. Finally, $L_3(x,f(x))$ is the sampled semiconjugacy loss.

In Table~\ref{tab:hyperparameters} we report the training hyperparameters for each example. Where validation data are available, we evaluate the loss after each epoch to monitor generalization and control the learning-rate schedule. Training stops according to the listed patience. The Chafee--Infante models are trained for the full number of epochs without a validation set or early stopping.

\begin{table}[H]
  \tiny
  \centering
  \setlength{\tabcolsep}{2pt}
  \begin{tabular}{lcccc}
    \toprule
    & Extended Leslie (10D) & Three-dimensional Leslie & Chafee--Infante & Red coral \\
    \midrule
    Optimizer                            & Adam & Adam & Adam & Adam \\
    Learning rate                        & $10^{-3}$ & $10^{-3}$ & $3\times 10^{-3}$ & $10^{-3}$ \\
    Batch size                           & 1024 & 1024 & 30000 & 1024 \\
    Max epochs                           & 1000 & 1000 & 4000 & 1000 \\
    Early stopping patience              & 100 & 100 & --- & 100 \\
    Reconstruction loss weight ($w_1$)   & 100 & 10 & 1 & 10 \\
    Prediction loss weight ($w_2$)       & 10 & 10 & 1 & 10 \\
    Semiconjugacy loss weight ($w_3$)    & 20 & 1 & 0 & 1 \\
    \bottomrule
  \end{tabular}
  \caption{Training hyperparameters.}
  \label{tab:hyperparameters}
\end{table}

Finally, Table~\ref{tab:coral_data} lists the demographic parameters $b_i$ and $s_i$ of the red coral model \eqref{eq:coral}.
\begin{table}[h]
  \centering
  \begin{tabular}{ccc}
    $i$ & $b_i$ & $s_i$ \\ \hline
    1  & 0       & 0.889 \\
    2  & 0       & 0.633 \\
    3  & 2.89    & 0.697 \\
    4  & 10.03   & 0.517 \\
    5  & 21.59   & 0.437 \\
    6  & 39.02   & 0.287 \\
    7  & 56.41   & 0.571 \\
    8  & 77.72   & 0.333 \\
    9  & 103.23  & 0.750 \\
    10 & 131.87  & 1.000 \\
    11 & 164.57  & 0.333 \\
    12 & 201.46  & 1.000 \\
    13 & 242.65  & ---   \\ \hline
  \end{tabular}
  \caption{Demographic parameters for \eqref{eq:coral} obtained from \cite{santangelo:bramanti:iannelli}.}
  \label{tab:coral_data}
\end{table}

\subsection{Morse graph computation}
\label{sec:appendix_CMGDB}
We compute the Morse graph and Conley indices for the latent dynamics with the software package \texttt{CMGDB}~\cite{CMGDB}, which realizes the combinatorial constructions of Section~\ref{sec:background}. The input to \texttt{CMGDB} consists of the latent map $g \colon Z \to Z$ together with a hyperrectangle $B \subset Z$. In our examples, $B$ encloses the encoded inputs and outputs of the training pairs (appropriately scaled). If $d = \dim Z$, define
\[
  \mathcal E_j:=\setdef{E(x)_j}{x=x_i\ \text{or}\ x=f(x_i),\ (x_i,f(x_i))\in\cD},
  \qquad a_j:=\min\mathcal E_j,\quad b_j:=\max\mathcal E_j.
\]
Then
\[
  B = \prod_{j=1}^{d} \bigl[\, a_j - \Delta(b_j - a_j),\ b_j + \Delta(b_j - a_j) \,\bigr],
\]
where the padding factor $\Delta \geq 0$ enlarges each interval by a fraction $\Delta$ of its length.

The software \texttt{CMGDB} involves a discretization of $B$ into a cubical complex $\cX$. To indicate that the multivalued map used by \texttt{CMGDB} is derived from the map $g$ in the latent space we denote it by $\cG \colon \cX \mvmap \cX$. The geometric representation of every cell $\xi \in \cX$ is a hyperrectangle $|\xi|$. The image of this hyperrectangle $|\xi|$ under $g$ is approximated by a bounding box of the images under $g$ of all its corner points. The image $\cG(\xi)$ is the set of cells in $\cX$ with geometric realizations intersecting the bounding box approximating $g(|\xi|)$, padded by a layer of boxes to account for approximation errors. This construction is heuristic since corner evaluations and padding do not certify that $\cG$ is an outer approximation of $g$. Consequently, the computed Morse graph has the stated interpretation only when $g$ is a selector of $\cG$.

The cubical complex is refined adaptively in a recursive manner by subdividing only the boxes that belong to a recurrent component of the current combinatorial model~\cite{bush:gameiro:harker:kokubu:mischaikow:obayashi:pilarczyk}. There are four parameters that govern the refinement:
\begin{itemize}
  \item the \emph{initial subdivision} sets the depth of the starting grid,
  \item the \emph{minimum} and \emph{maximum subdivision} bound the adaptive resolution, and
  \item the \emph{subdivision limit} caps the number of boxes a Morse set may contain before it is refined further.
\end{itemize}

In Table~\ref{tab:cmgdb} we list the \texttt{CMGDB} parameters for each example. For all computations we set the subdivision limit equal to $10^4$.

\begin{table}[H]
  \tiny
  \centering
  \begin{tabular}{lcccc}
    \toprule
    Computation & $d$ & Domain padding $\Delta$ & Box-map padding & Subdivision $(s_{\rm init},s_{\rm min},s_{\rm max})$ \\
    \midrule
    Extended Leslie (10D), latent & 2 & 0.01 & Yes & $(27,29,30)$ \\
    Three-dimensional Leslie, $g_1$ fine & 2 & 0.01 & Yes & $(23,23,27)$ \\
    Three-dimensional Leslie, $g_1$ coarse & 2 & 0.01 & Yes & $(22,22,24)$ \\
    Chafee--Infante, $d=1$ & 1 & 0.1 & Yes & $(7,8,11)$ \\
    Chafee--Infante, $d=2$ & 2 & 0.1 & Yes & $(14,16,22)$ \\
    Chafee--Infante, $d=3$ & 3 & 0.1 & Yes & $(21,24,33)$ \\
    Red coral, latent & 1 & 0.01 & Yes & $(8,8,12)$ \\
    Two-dimensional Leslie, reference & 2 & --- & No & $(24,27,28)$ \\
    Three-dimensional Leslie, reference & 3 & --- & No & $(29,33,36)$ \\
    \bottomrule
  \end{tabular}
  \caption{\texttt{CMGDB} parameters.}
  \label{tab:cmgdb}
\end{table}

\subsection{Chafee--Infante statistics on regions of attraction}
\label{sec:appendix_ci_roa_statistics}
In this section, we provide details on the region of attraction computations discussed in Section~\ref{sec:chafee_infante}. All $45$ computations resulted in Morse graphs with two distinct minimal nodes $j, k$ such that $E(M^+) \in |\pi_{\cG}^{-1}(j)|$ and $E(M^-) \in |\pi_{\cG}^{-1}(k)|$. For these computations, we used a uniform grid with eight subdivisions per dimension. Thus each coordinate dimension is subdivided into $2^8=256$ intervals, and the full grid has $256^d$ cells. The domain padding was $\Delta=0.1$.

In Table~\ref{tab:basins_attraction_raw} we report the results for each run, and in Table~\ref{tab:basins_attraction_summary} we provide summary statistics. Across all runs, the mean percentage correctly classified as $M(0^\pm)$ $\pm$ SD was $42.82\% \pm 17.89\%$ for $d=1$, $56.93\% \pm 11.36\%$ for $d=2$, and $60.11\% \pm 7.77\%$ for $d=3$. We highlight that for $d=3$, the spread can be mainly attributed to models trained using the first dataset; the mean $\pm$ SD excluding these runs is $62.46\% \pm 1.12\%$.

\begin{table}[!htpb]
  \centering
  \renewcommand{\arraystretch}{1.2}
  \tiny
  \begin{tabular}{@{}cccccc@{}}
    \toprule
    $d$ & Dataset & Run & Undetermined & Misclassified & Correctly classified as $M(0^\pm)$ \\ \midrule
    $d=1$ & 1 & 1 & 33.27\% & 0.950\% & 65.78\% \\
    & 1 & 2 & 31.92\% & 01.12\% & 66.96\% \\
    & 1 & 3 & 63.09\% & 0.380\% & 36.53\% \\
    & 2 & 1 & 53.20\% & 0.430\% & 46.37\% \\
    & 2 & 2 & 51.04\% & 0.790\% & 48.17\% \\
    & 2 & 3 & 55.17\% & 0.530\% & 44.30\% \\
    & 3 & 1 & 55.05\% & 0.690\% & 44.26\% \\
    & 3 & 2 & 48.91\% & 01.11\% & 49.98\% \\
    & 3 & 3 & 52.42\% & 0.680\% & 46.90\% \\
    & 4 & 1 & 46.72\% & 0.470\% & 52.81\% \\
    & 4 & 2 & 35.60\% & 0.800\% & 63.60\% \\
    & 4 & 3 & 73.48\% & 0.280\% & 26.24\% \\
    & 5 & 1 & 96.91\% & 0.070\% & 03.02\% \\
    & 5 & 2 & 69.24\% & 0.220\% & 30.54\% \\
    & 5 & 3 & 82.92\% & 0.250\% & 16.83\% \\
    \addlinespace
    $d=2$ & 1 & 1 & 34.40\% & 0.440\% & 65.16\% \\
    & 1 & 2 & 66.35\% & 0.060\% & 33.59\% \\
    & 1 & 3 & 36.95\% & 0.090\% & 62.96\% \\
    & 2 & 1 & 34.02\% & 0.200\% & 65.78\% \\
    & 2 & 2 & 33.72\% & 0.280\% & 66.00\% \\
    & 2 & 3 & 33.26\% & 0.300\% & 66.44\% \\
    & 3 & 1 & 64.71\% & 0.040\% & 35.25\% \\
    & 3 & 2 & 33.68\% & 0.570\% & 65.75\% \\
    & 3 & 3 & 35.01\% & 0.300\% & 64.69\% \\
    & 4 & 1 & 49.04\% & 0.660\% & 50.30\% \\
    & 4 & 2 & 49.43\% & 0.570\% & 50.00\% \\
    & 4 & 3 & 49.41\% & 0.280\% & 50.31\% \\
    & 5 & 1 & 50.01\% & 0.640\% & 49.35\% \\
    & 5 & 2 & 35.57\% & 0.150\% & 64.28\% \\
    & 5 & 3 & 35.70\% & 0.280\% & 64.02\% \\
    \addlinespace
    $d=3$ & 1 & 1 & 36.53\% & 0.380\% & 63.09\% \\
    & 1 & 2 & 43.77\% & 0.150\% & 56.08\% \\
    & 1 & 3 & 67.08\% & 0.010\% & 32.91\% \\
    & 2 & 1 & 37.38\% & 0.080\% & 62.54\% \\
    & 2 & 2 & 38.43\% & 0.020\% & 61.55\% \\
    & 2 & 3 & 38.65\% & 0.040\% & 61.31\% \\
    & 3 & 1 & 37.01\% & 0.060\% & 62.93\% \\
    & 3 & 2 & 38.25\% & 0.000\% & 61.75\% \\
    & 3 & 3 & 38.35\% & 0.000\% & 61.65\% \\
    & 4 & 1 & 34.91\% & 0.020\% & 65.07\% \\
    & 4 & 2 & 37.54\% & 0.030\% & 62.43\% \\
    & 4 & 3 & 37.90\% & 0.040\% & 62.06\% \\
    & 5 & 1 & 35.74\% & 0.090\% & 64.17\% \\
    & 5 & 2 & 37.71\% & 0.080\% & 62.21\% \\
    & 5 & 3 & 38.05\% & 0.060\% & 61.89\% \\
    \bottomrule
  \end{tabular}
  \caption{Percentage of initial conditions that are undetermined, misclassified, or correctly classified as converging to one of the steady state solutions $M(0^-)$ or $M(0^+)$ of \eqref{eq:ci_steady_state}. In comparison to Table~\ref{tab:basins_attraction}, we combine the counts of misclassified and correctly classified for both solutions.}
  \label{tab:basins_attraction_raw}
\end{table}

\begin{table}[!htpb]
  \centering
  \renewcommand{\arraystretch}{1.2}
  \small
  \begin{tabular}{@{}llccc@{}}
    \toprule
    $d$ & Statistic & Undetermined & Misclassified & Correctly classified as $M(0^\pm)$ \\ \midrule
    $d=1$ & Mean & 56.60\% & 0.585\% & 42.82\% \\
    & Median & 53.20\% & 0.530\% & 46.37\% \\
    & Std & 18.17\% & 0.326\% & 17.89\% \\
    \midrule
    $d=2$ & Mean & 42.75\% & 0.324\% & 56.93\% \\
    & Median & 35.70\% & 0.280\% & 64.02\% \\
    & Std & 11.39\% & 0.208\% & 11.36\% \\
    \midrule
    $d=3$ & Mean & 39.82\% & 0.071\% & 60.11\% \\
    & Median & 37.90\% & 0.040\% & 62.06\% \\
    & Std & 07.79\% & 0.095\% & 07.77\% \\
    \bottomrule
  \end{tabular}
  \caption{Summary statistics of the percentages of initial conditions that are undetermined, misclassified, or correctly classified as converging to one of the steady state solutions $M(0^-)$ or $M(0^+)$ of \eqref{eq:ci_steady_state}. In comparison to Table~\ref{tab:basins_attraction}, we combine the counts of misclassified and correctly classified for both solutions. The statistics are computed across the 15 training runs for each latent dimension $d$.}
  \label{tab:basins_attraction_summary}
\end{table}

\subsection{Sampled residual and tolerance estimates}
\label{sec:appendix_sampled_residual_tolerance}

For each reported minimal node $q$, $\cG(\pi^{-1}(q))\subset\pi^{-1}(q)$, so $N_q=|\pi^{-1}(q)|$ is an attracting block for any selector of $\cG$. Since the box map is constructed heuristically, this does not certify that $g$ is a selector or that $N_q$ is an attracting block for $g$. We nevertheless use $N_q$ as a candidate block and numerically evaluate
\[
  \widehat R_q=\max_{x\in\mathcal S_q}d_Z\bigl(g(E(x)),E(f(x))\bigr)
  \quad\text{and}\quad
  \widehat\tau_q=\min_{z\in\mathcal T_q}\operatorname{dist}_{Z}\bigl(g(z),Z\setminus\Int(N_q)\bigr),
\]
where $\mathcal S_q$ consists of sampled states $x$ such that $E(x)\in N_q$, and $\mathcal T_q$ consists of samples from every box $|\xi|$, $\xi\in\pi^{-1}(q)$. In each box we evaluate sample points and then search further within the boxes yielding the smallest    $\operatorname{dist}_{Z}\bigl(g(z),Z\setminus\Int(N_q)\bigr)$. Thus $\widehat R_q$ is a lower estimate of the residual supremum, while $\widehat\tau_q$ is an upper estimate of the true tolerance.

\begin{table}[H]
  \centering
  \resizebox{\textwidth}{!}{%
    \begin{tabular}{@{}llcccc@{}}
      \toprule
      Example & $q$ & $|\mathcal S_q|$ & $\widehat R_q$ & $\widehat\tau_q$ & $\widehat R_q<\widehat\tau_q$? \\
      \midrule
      Three-dimensional Leslie, fine resolution & $0$ & $5.68\times10^5$ & $1.07$                  & $4.25\times10^{-4}$ & No \\
      & $1$ & $3.78\times10^6$ & $6.97\times10^{-1}$ & $4.06\times10^{-4}$ & No \\
      & $4$ & $2.84\times10^4$ & $2.31\times10^{-1}$ & $4.62\times10^{-4}$ & No \\
      Three-dimensional Leslie, coarse resolution & $0$ & $9.04\times10^5$ & $1.07$ & $8.01\times10^{-4}$ & No \\
      & $1$ & $3.07\times10^6$ & $6.97\times10^{-1}$ & $7.92\times10^{-4}$ & No \\
      Extended Leslie (10D) & $0$ & $2.24\times10^6$ & $6.80\times10^{-2}$ & $5.20\times10^{-5}$ & No \\
      & $1$ & $1.31\times10^5$ & $5.31\times10^{-2}$ & $5.41\times10^{-5}$ & No \\
      Red coral             & $0$ & $1.94\times10^6$ & $5.40\times10^{-2}$ & $7.79\times10^{-3}$ & No \\
      & $1$ & $7.16\times10^5$ & $2.48\times10^{-1}$ & $7.96\times10^{-3}$ & No \\
      Chafee--Infante, $d=1$ & $0$ & $1.33\times10^5$ & $6.58$                  & $1.04\times10^{-1}$ & No \\
      & $1$ & $1.31\times10^5$ & $6.11$                  & $6.58\times10^{-2}$ & No \\
      Chafee--Infante, $d=2$ & $0$ & $1.11\times10^5$ & $3.52\times10^{-2}$ & $3.95\times10^{-2}$ & \textbf{Yes} \\
      & $1$ & $1.25\times10^5$ & $1.60\times10^{-2}$ & $4.25\times10^{-2}$ & \textbf{Yes} \\
      Chafee--Infante, $d=3$ & $0$ & $1.19\times10^5$ & $4.31\times10^{-3}$ & $2.34\times10^{-2}$ & \textbf{Yes} \\
      & $1$ & $1.18\times10^5$ & $4.73\times10^{-3}$ & $2.36\times10^{-2}$ & \textbf{Yes} \\
      \bottomrule
    \end{tabular}
  }
  \caption{Sampled residual and tolerance estimates for the minimal nodes. For each $q$, $|\mathcal{T}_q| \geq 2^{23}$.}
  \label{tab:sampled_residual_tolerance}
\end{table}

For each minimal node $q$ in the Leslie, red coral, and Chafee--Infante with $d=1$ examples, the estimates satisfy $\widehat R_q \geq \widehat \tau_q$, and therefore \eqref{eq:loc_semiconjugacy} fails for $|\pi^{-1}(q)|$. For both minimal nodes in the Chafee--Infante examples with $d=2,3$, however, $\widehat R_q < \widehat \tau_q$, so the sampling finds no violation of \eqref{eq:loc_semiconjugacy}.

\clearpage
\printbibliography

\end{document}